\documentclass[11pt]{amsart}
\usepackage{latexsym}
\usepackage{amsmath, amssymb, amsfonts, amsxtra, amsthm}
\usepackage{appendix}
\usepackage{bbm}
\usepackage{verbatim}
\usepackage{enumerate}
\usepackage[usenames]{color}
\textwidth 5.8in\oddsidemargin .25in\evensidemargin .25in
%\textheigt 9.5in
\textheight 9in
\topmargin -.25in

\begin{document}

\title{A variation norm Carleson theorem}

\begin{abstract}  We strengthen the Carleson-Hunt theorem by
proving $L^p$ estimates for the  $r$-variation of the partial
sum operators for Fourier series and integrals, for $r>\max\{p',2\}$.
Four appendices are concerned
with transference, a variation norm Menshov-Paley-Zygmund theorem,
and  applications to  nonlinear Fourier
transforms and  ergodic theory.
\end{abstract}

\author
[R. Oberlin, A. Seeger, T. Tao, C. Thiele, J. Wright]
{Richard Oberlin \ \ \ Andreas Seeger \ \ \ Terence Tao
\\ Christoph Thiele \ \ \  \  \ \ James Wright}

%\author{Richard Oberlin}
\address{R. Oberlin, Department of Mathematics,
Louisiana State University,
Baton Rouge, LA 70803-4918, USA}
\email{richard.oberlin@gmail.com}
%\curraddr{insert}

%\author{Andreas Seeger}
\address{A. Seeger, Department of Mathematics, University of Wisconsin,
480 Lincoln Drive,
Madison, WI, 53706, USA}
\email{seeger@math.wisc.edu}

%\author{Terence Tao}
\address{T. Tao, Department of Mathematics,
UCLA, Los Angeles, CA 90095-1555, USA}
\email{tao@math.ucla.edu}

%\author{Christoph Thiele}
\address{C. Thiele, Department of Mathematics,
UCLA, Los Angeles, CA 90095-1555, USA}
\email{thiele@math.ucla.edu}

%\author{James Wright}
\address{J. Wright,
Maxwell Institute of Mathematical Sciences and the School of Mathematics,  University of
Edinburgh, JCMB, King's Buildings, Mayfield Road, Edinburgh EH9 3JZ,
Scotland}
\email{J.R.Wright@ed.ac.uk}

\subjclass{42B15}

\thanks{R.O. partially supported by NSF VIGRE grant
DMS 0502315.
A.S. partially supported by NSF grant DMS 0652890.
T.T. partially supported by NSF Research Award
DMS-0649473, the NSF Waterman award and a grant from the
McArthur Foundation.
C.Th. partially supported by NSF grant DMS 0701302.
J.W. partially supported by an EPSRC grant.}

\date{\today}

%\author{Richard Oberlin}\author{Andreas Seeger}\author{Terence Tao}
%\author{Christoph Thiele}\author{James Wright}

\newcommand{\indic}[1]{\mathbbm{1}_{ {#1}  }}
\newcommand{\norm}[1]{ \left|  #1 \right| }
\newcommand{\Norm}[1]{ \left\|  #1 \right\| }
\newcommand{\ifof}{\Leftrightarrow}
\newcommand{\set}[1]{ \left\{ #1 \right\} }
\newcommand{\floor}[1]{\lfloor #1 \rfloor }

\def\cXtop{{\Xi^{\rm{top}}}}
\def\dyad{{\text{\rm dyad}}}
\def\intslash{\rlap{\kern  .32em $\mspace {.5mu}\backslash$ }\int}
\def\qsl{{\rlap{\kern  .32em $\mspace {.5mu}\backslash$ }\int_{Q_x}}}
\def\Re{\operatorname{Re\,}}
\def\Im{\operatorname{Im\,}}
\def\mx{{\max}}
\def\mn{{\min}}\def\vth{\vartheta}

\def\ene{{\mathcal E}}
\def\den{{\mu}}

\def\complex{{\mathbb C}}
\def\norm#1{{ \left|  #1 \right| }}
\def\Norm#1{{ \left\|  #1 \right\| }}
\def\set#1{{ \left\{ #1 \right\} }}
\def\floor#1{{\lfloor #1 \rfloor }}
\def\emph#1{{\it #1 }}
\def\diam{{\text{\rm diam}}}
\def\osc{{\text{\rm osc}}}
\def\ffB{\mathcal B}
\def\seq{\subseteq}
\def\Id{\text{\sl Id}}

\def\Ga{\Gamma}
\def\ga{\gamma}
\def\Th{\Theta}

\def\prd{{\text{\it prod}}}
\def\parab{{\text{\it parabolic}}}

\def\eg{{\it e.g. }}
\def\cf{{\it cf}}
\def\Rn{{\mathbb R^n}}
\def\Rd{{\mathbb R^d}}
\def\per{{\text{\rm per}}}
\def\sgn{{\text{\rm sign }}}
\def\rank{{\text{\rm rank }}}
\def\corank{{\text{\rm corank }}}
\def\coker{{\text{\rm Coker }}}
\def\loc{{\text{\rm loc}}}
\def\spec{{\text{\rm spec}}}

\def\comp{{\text{\rm comp}}}

\def\Coi{{C^\infty_0}}
\def\dist{{\text{\rm dist}}}
\def\diag{{\text{\rm diag}}}
\def\supp{{\text{\rm supp }}}
\def\rad{{\text{\rm rad}}}
\def\Lip{{\text{\rm Lip}}}
\def\inn#1#2{\langle#1,#2\rangle}
\def\biginn#1#2{\big\langle#1,#2\big\rangle}
\def\rta{\rightarrow}
\def\lta{\leftarrow}
\def\noi{\noindent}
\def\lcontr{\rfloor}
\def\lco#1#2{{#1}\lcontr{#2}}
\def\lcoi#1#2{\imath({#1}){#2}}
\def\rco#1#2{{#1}\rcontr{#2}}
\def\bin#1#2{{\pmatrix {#1}\\{#2}\endpmatrix}}
\def\meas{{\text{\rm meas}}}

\def\card{\text{\rm card}}
\def\lc{\lesssim}
\def\gc{\gtrsim}
\def\pv{\text{\rm p.v.}}

%Greek letters
\def\alp{\alpha}             \def\Alp{\Alpha}
\def\bet{\beta}
\def\gam{\gamma}             \def\Gam{\Gamma}
\def\del{\delta}             \def\Del{\Delta}
\def\ep{\epsilon}
\def\zet{\zeta}
\def\tet{\theta}             \def\Tet{\Theta}
\def\iot{\iota}
\def\kap{\kappa}
\def\ka{\kappa}
\def\lam{\lambda}            \def\Lam{\Lambda}
\def\la{\lambda}             \def\La{\Lambda}
\def\sig{\sigma}             \def\Sig{\Sigma}
\def\si{\sigma}              \def\Si{\Sigma}
\def\vphi{\varphi}
\def\ome{\omega}             \def\Ome{\Omega}
\def\om{\omega}              \def\Om{\Omega}

\def\fA{{\mathfrak {A}}}
\def\fB{{\mathfrak {B}}}
\def\fC{{\mathfrak {C}}}
\def\fD{{\mathfrak {D}}}
\def\fE{{\mathfrak {E}}}
\def\fF{{\mathfrak {F}}}
\def\fG{{\mathfrak {G}}}
\def\fH{{\mathfrak {H}}}
\def\fI{{\mathfrak {I}}}
\def\fJ{{\mathfrak {J}}}
\def\fK{{\mathfrak {K}}}
\def\fL{{\mathfrak {L}}}
\def\fM{{\mathfrak {M}}}
\def\fN{{\mathfrak {N}}}
\def\fO{{\mathfrak {O}}}
\def\fP{{\mathfrak {P}}}
\def\fQ{{\mathfrak {Q}}}
\def\fR{{\mathfrak {R}}}
\def\fS{{\mathfrak {S}}}
\def\fT{{\mathfrak {T}}}
\def\fU{{\mathfrak {U}}}
\def\fV{{\mathfrak {V}}}
\def\fW{{\mathfrak {W}}}
\def\fX{{\mathfrak {X}}}
\def\fY{{\mathfrak {Y}}}
\def\fZ{{\mathfrak {Z}}}

\def\fa{{\mathfrak {a}}}
\def\fb{{\mathfrak {b}}}
\def\fc{{\mathfrak {c}}}
\def\fd{{\mathfrak {d}}}
\def\fe{{\mathfrak {e}}}
\def\ff{{\mathfrak {f}}}
\def\fg{{\mathfrak {g}}}
\def\fh{{\mathfrak {h}}}
%Attn:\fi can't be defined.
\def\fj{{\mathfrak {j}}}
\def\fk{{\mathfrak {k}}}
\def\fl{{\mathfrak {l}}}
\def\fm{{\mathfrak {m}}}
\def\fn{{\mathfrak {n}}}
\def\fo{{\mathfrak {o}}}
\def\fp{{\mathfrak {p}}}
\def\fq{{\mathfrak {q}}}
\def\fr{{\mathfrak {r}}}
\def\fs{{\mathfrak {s}}}
\def\ft{{\mathfrak {t}}}
\def\fu{{\mathfrak {u}}}
\def\fv{{\mathfrak {v}}}
\def\fw{{\mathfrak {w}}}
\def\fx{{\mathfrak {x}}}
\def\fy{{\mathfrak {y}}}
\def\fz{{\mathfrak {z}}}

\def\bbA{{\mathbb {A}}}
\def\bbB{{\mathbb {B}}}
\def\bbC{{\mathbb {C}}}
\def\bbD{{\mathbb {D}}}
\def\bbE{{\mathbb {E}}}
\def\bbF{{\mathbb {F}}}
\def\bbG{{\mathbb {G}}}
\def\bbH{{\mathbb {H}}}
\def\bbI{{\mathbb {I}}}
\def\bbJ{{\mathbb {J}}}
\def\bbK{{\mathbb {K}}}
\def\bbL{{\mathbb {L}}}
\def\bbM{{\mathbb {M}}}
\def\bbN{{\mathbb {N}}}
\def\bbO{{\mathbb {O}}}
\def\bbP{{\mathbb {P}}}
\def\bbQ{{\mathbb {Q}}}
\def\bbR{{\mathbb {R}}}
\def\bbS{{\mathbb {S}}}
\def\bbT{{\mathbb {T}}}
\def\bbU{{\mathbb {U}}}
\def\bbV{{\mathbb {V}}}
\def\bbW{{\mathbb {W}}}
\def\bbX{{\mathbb {X}}}
\def\bbY{{\mathbb {Y}}}
\def\bbZ{{\mathbb {Z}}}

\def\cA{{\mathcal {A}}}
\def\cB{{\mathcal {B}}}
\def\cC{{\mathcal {C}}}
\def\cD{{\mathcal {D}}}
\def\cE{{\mathcal {E}}}
\def\cF{{\mathcal {F}}}
\def\cG{{\mathcal {G}}}
\def\cH{{\mathcal {H}}}
\def\cI{{\mathcal {I}}}
\def\cJ{{\mathcal {J}}}
\def\cK{{\mathcal {K}}}
\def\cL{{\mathcal {L}}}
\def\cM{{\mathcal {M}}}
\def\cN{{\mathcal {N}}}
\def\cO{{\mathcal {O}}}
\def\cP{{\mathcal {P}}}
\def\cQ{{\mathcal {Q}}}
\def\cR{{\mathcal {R}}}
\def\cS{{\mathcal {S}}}
\def\cT{{\mathcal {T}}}
\def\cU{{\mathcal {U}}}
\def\cV{{\mathcal {V}}}
\def\cW{{\mathcal {W}}}
\def\cX{{\mathcal {X}}}
\def\cY{{\mathcal {Y}}}
\def\cZ{{\mathcal {Z}}}

\def\tA{{\widetilde{A}}}
\def\tB{{\widetilde{B}}}
\def\tC{{\widetilde{C}}}
\def\tD{{\widetilde{D}}}
\def\tE{{\widetilde{E}}}
\def\tF{{\widetilde{F}}}
\def\tG{{\widetilde{G}}}
\def\tH{{\widetilde{H}}}
\def\tI{{\widetilde{I}}}
\def\tJ{{\widetilde{J}}}
\def\tK{{\widetilde{K}}}
\def\tL{{\widetilde{L}}}
\def\tM{{\widetilde{M}}}
\def\tN{{\widetilde{N}}}
\def\tO{{\widetilde{O}}}
\def\tP{{\widetilde{P}}}
\def\tQ{{\widetilde{Q}}}
\def\tR{{\widetilde{R}}}
\def\tS{{\widetilde{S}}}
\def\tT{{\widetilde{T}}}
\def\tU{{\widetilde{U}}}
\def\tV{{\widetilde{V}}}
\def\tW{{\widetilde{W}}}
\def\tX{{\widetilde{X}}}
\def\tY{{\widetilde{Y}}}
\def\tZ{{\widetilde{Z}}}

\def\tcA{{\widetilde{\mathcal {A}}}}
\def\tcB{{\widetilde{\mathcal {B}}}}
\def\tcC{{\widetilde{\mathcal {C}}}}
\def\tcD{{\widetilde{\mathcal {D}}}}
\def\tcE{{\widetilde{\mathcal {E}}}}
\def\tcF{{\widetilde{\mathcal {F}}}}
\def\tcG{{\widetilde{\mathcal {G}}}}
\def\tcH{{\widetilde{\mathcal {H}}}}
\def\tcI{{\widetilde{\mathcal {I}}}}
\def\tcJ{{\widetilde{\mathcal {J}}}}
\def\tcK{{\widetilde{\mathcal {K}}}}
\def\tcL{{\widetilde{\mathcal {L}}}}
\def\tcM{{\widetilde{\mathcal {M}}}}
\def\tcN{{\widetilde{\mathcal {N}}}}
\def\tcO{{\widetilde{\mathcal {O}}}}
\def\tcP{{\widetilde{\mathcal {P}}}}
\def\tcQ{{\widetilde{\mathcal {Q}}}}
\def\tcR{{\widetilde{\mathcal {R}}}}
\def\tcS{{\widetilde{\mathcal {S}}}}
\def\tcT{{\widetilde{\mathcal {T}}}}
\def\tcU{{\widetilde{\mathcal {U}}}}
\def\tcV{{\widetilde{\mathcal {V}}}}
\def\tcW{{\widetilde{\mathcal {W}}}}
\def\tcX{{\widetilde{\mathcal {X}}}}
\def\tcY{{\widetilde{\mathcal {Y}}}}
\def\tcZ{{\widetilde{\mathcal {Z}}}}

\def\tfA{{\widetilde{\mathfrak {A}}}}
\def\tfB{{\widetilde{\mathfrak {B}}}}
\def\tfC{{\widetilde{\mathfrak {C}}}}
\def\tfD{{\widetilde{\mathfrak {D}}}}
\def\tfE{{\widetilde{\mathfrak {E}}}}
\def\tfF{{\widetilde{\mathfrak {F}}}}
\def\tfG{{\widetilde{\mathfrak {G}}}}
\def\tfH{{\widetilde{\mathfrak {H}}}}
\def\tfI{{\widetilde{\mathfrak {I}}}}
\def\tfJ{{\widetilde{\mathfrak {J}}}}
\def\tfK{{\widetilde{\mathfrak {K}}}}
\def\tfL{{\widetilde{\mathfrak {L}}}}
\def\tfM{{\widetilde{\mathfrak {M}}}}
\def\tfN{{\widetilde{\mathfrak {N}}}}
\def\tfO{{\widetilde{\mathfrak {O}}}}
\def\tfP{{\widetilde{\mathfrak {P}}}}
\def\tfQ{{\widetilde{\mathfrak {Q}}}}
\def\tfR{{\widetilde{\mathfrak {R}}}}
\def\tfS{{\widetilde{\mathfrak {S}}}}
\def\tfT{{\widetilde{\mathfrak {T}}}}
\def\tfU{{\widetilde{\mathfrak {U}}}}
\def\tfV{{\widetilde{\mathfrak {V}}}}
\def\tfW{{\widetilde{\mathfrak {W}}}}
\def\tfX{{\widetilde{\mathfrak {X}}}}
\def\tfY{{\widetilde{\mathfrak {Y}}}}
\def\tfZ{{\widetilde{\mathfrak {Z}}}}

%roman letters with a tilde
\def\Atil{{\widetilde A}}          \def\atil{{\tilde a}}
\def\Btil{{\widetilde B}}          \def\btil{{\tilde b}}
\def\Ctil{{\widetilde C}}          \def\ctil{{\tilde c}}
\def\Dtil{{\widetilde D}}          \def\dtil{{\tilde d}}
\def\Etil{{\widetilde E}}          \def\etil{{\tilde e}}
\def\Ftil{{\widetilde F}}          \def\ftil{{\tilde f}}
\def\Gtil{{\widetilde G}}          \def\gtil{{\tilde g}}
\def\Htil{{\widetilde H}}          \def\htil{{\tilde h}}
\def\Itil{{\widetilde I}}          \def\itil{{\tilde i}}
\def\Jtil{{\widetilde J}}          \def\jtil{{\tilde j}}
\def\Ktil{{\widetilde K}}          \def\ktil{{\tilde k}}
\def\Ltil{{\widetilde L}}          \def\ltil{{\tilde l}}
\def\Mtil{{\widetilde M}}          \def\mtil{{\tilde m}}
\def\Ntil{{\widetilde N}}          \def\ntil{{\tilde n}}
\def\Otil{{\widetilde O}}          \def\otil{{\tilde o}}
\def\Ptil{{\widetilde P}}          \def\ptil{{\tilde p}}
\def\Qtil{{\widetilde Q}}          \def\qtil{{\tilde q}}
\def\Rtil{{\widetilde R}}          \def\rtil{{\tilde r}}
\def\Stil{{\widetilde S}}          \def\stil{{\tilde s}}
\def\Ttil{{\widetilde T}}          \def\ttil{{\tilde t}}
\def\Util{{\widetilde U}}          \def\util{{\tilde u}}
\def\Vtil{{\widetilde V}}          \def\vtil{{\tilde v}}
\def\Wtil{{\widetilde W}}          \def\wtil{{\tilde w}}
\def\Xtil{{\widetilde X}}          \def\xtil{{\tilde x}}
\def\Ytil{{\widetilde Y}}          \def\ytil{{\tilde y}}
\def\Ztil{{\widetilde Z}}          \def\ztil{{\tilde z}}

%roman letters with a bar
%\def\abar{{\bar a}} \def\Abar{{\bar A}}
%\def\bbar{{\bar b}} \def\Bbar{{\bar B}}
%\def\cbar{{\bar c}} \def\Cbar{{\bar C}}
%\def\dbar{{\bar d}} \def\Dbar{{\bar D}}
%\def\ebar{{\bar e}} \def\Ebar{{\bar E}}
%\def\fbar{{\bar f}} \def\Fbar{{\bar F}}
%\def\gbar{{\bar g}} \def\Gbar{{\bar G}}
%\def\hBar{{\bar h}} \def\Hbar{{\bar H}}
%\def\ibar{{\bar i}} \def\Ibar{{\bar I}}
%\def\jbar{{\bar j}} \def\Jbar{{\bar J}}
%\def\kbar{{\bar k}} \def\Kbar{{\bar K}}
%\def\lbar{{\bar l}} \def\Lbar{{\bar L}}
%\def\mbar{{\bar m}} \def\Mbar{{\bar M}}
%\def\nbar{{\bar n}} \def\Nbar{{\bar N}}
%\def\obar{{\bar o}} \def\Obar{{\bar O}}
%\def\pbar{{\bar p}} \def\Pbar{{\bar P}}
%\def\qbar{{\bar q}} \def\Qbar{{\bar Q}}
%\def\rbar{{\bar r}} \def\Rbar{{\bar R}}
%\def\sbar{{\bar s}} \def\Sbar{{\bar S}}
%\def\tbar{{\bar t}} \def\Tbar{{\bar T}}
%\def\ubar{{\bar u}} \def\Ubar{{\bar U}}
%\def\vbar{{\bar v}} \def\Vbar{{\bar V}}
%\def\wbar{{\bar w}} \def\Wbar{{\bar W}}
%\def\xbar{{\bar x}} \def\Xbar{{\bar X}}
%\def\ybar{{\bar y}} \def\Ybar{{\bar Y}}
%\def\zbar{{\bar z}}  \def\Zbar{{\bar Z}}

%roman letters with a hat
\def\ahat{{\hat a}}          \def\Ahat{{\widehat A}}
\def\bhat{{\hat b}}          \def\Bhat{{\widehat B}}
\def\chat{{\hat c}}          \def\Chat{{\widehat C}}
\def\dhat{{\hat d}}          \def\Dhat{{\widehat D}}
\def\ehat{{\hat e}}          \def\Ehat{{\widehat E}}
\def\fhat{{\hat f}}          \def\Fhat{{\widehat F}}
\def\ghat{{\hat g}}          \def\Ghat{{\widehat G}}
\def\hhat{{\hat h}}          \def\Hhat{{\widehat H}}
\def\ihat{{\hat i}}          \def\Ihat{{\widehat I}}
\def\jhat{{\hat j}}          \def\Jhat{{\widehat J}}
\def\khat{{\hat k}}          \def\Khat{{\widehat K}}
\def\lhat{{\hat l}}          \def\Lhat{{\widehat L}}
\def\mhat{{\hat m}}          \def\Mhat{{\widehat M}}
\def\nhat{{\hat n}}          \def\Nhat{{\widehat N}}
\def\ohat{{\hat o}}          \def\Ohat{{\widehat O}}
\def\phat{{\hat p}}          \def\Phat{{\widehat P}}
\def\qhat{{\hat q}}          \def\Qhat{{\widehat Q}}
\def\rhat{{\hat r}}          \def\Rhat{{\widehat R}}
\def\shat{{\hat s}}          \def\Shat{{\widehat S}}
\def\that{{\hat t}}          \def\That{{\widehat T}}
\def\uhat{{\hat u}}          \def\Uhat{{\widehat U}}
\def\vhat{{\hat v}}          \def\Vhat{{\widehat V}}
\def\what{{\hat w}}          \def\What{{\widehat W}}
\def\xhat{{\hat x}}          \def\Xhat{{\widehat X}}
\def\yhat{{\hat y}}          \def\Yhat{{\widehat Y}}
\def\zhat{{\hat z}}          \def\Zhat{{\widehat Z}}

\def\tg{{\widetilde g}}

\def\Be#1{\begin{equation}\label{#1}}
\def\Ee{\end{equation}}
\def\endal{\end{align}}
\def\bas{\begin{align*}}
\def\eas{\end{align*}}
\def\bi{\begin{itemize}}
\def\ei{\end{itemize}}

\def\dim{{\hbox{\roman dim}}}
\def\dimsup{{\roman \overline{dim}}}
\def\dir{{\hbox{\roman dir}}}
\def\rp{{r^\prime}}
\def\rpt{{\tilde r^\prime}}
\def \endprf{\hfill  {\vrule height6pt width6pt depth0pt}\medskip}
\def\emph#1{{\it #1}}
\def\textbf#1{{\bf #1}}

% \swapnumbers
% \pagestyle{headings}
%\parindent = 0 pt
\parskip = 12 pt

%%ucla definitions
\newcommand {\rea}{\mathbb{R}}
\newcommand {\R}{\rea}
\newcommand {\eps}{\epsilon}
\newcommand {\ma}{\mathcal{M}}
\newcommand {\squaref}{\mathbb{S}}
\newcommand {\intervals}{\mathcal{I}}
\newcommand {\proj}{\mathrm{proj}}
\newcommand {\J}{\mathbf{J}}
\newcommand {\D}{\mathcal{D}}
\newcommand {\Z}{\mathbb{Z}}
\newcommand {\N}{\mathbb{N}}
\newcommand {\T}{\mathbf{T}}
\newcommand {\CS}{\mathcal{S}}
\newcommand {\CC}{\mathcal{C}}
\newcommand {\energy}{\mathop{\mathrm{energy}}}
\newcommand {\density}{\mathop{\mathrm{density}}}
\newcommand {\densityu}{\mathop{\mathrm{density}_u}}
\newcommand {\densityl}{\mathop{\mathrm{density}_l}}
\newcommand {\densityov}{\mathop{\mathrm{density}_{ov}}}
\newcommand {\densityla}{\mathop{\mathrm{density}_{la}}}
\newcommand {\cent}{\mathop{\mathrm{center}}}
\newcommand {\expect}{\mathbb{E}}
\newcommand {\diff}{\mathbb{D}}
\newcommand {\dis}{\mathrm{dist}}
\newcommand {\real}{\mathop{\mathrm{Re}}}
\newcommand {\imaginary}{\mathop{\mathrm{Im}}}

\def \P{\mathbf{P}}
\def\<{\langle}
\def\>{\rangle}

\newtheorem{theorem}{Theorem}[section]
\newtheorem*{definition}{Definition}
\newtheorem{conjecture}[theorem]{Conjecture}
\newtheorem{proposition}[theorem]{Proposition}
\newtheorem{claim}[theorem]{Claim}
\newtheorem{lemma}[theorem]{Lemma}
\newtheorem{corollary}[theorem]{Corollary}
\newtheorem{observation}[theorem]{Observation}

\theoremstyle{remark}
   \newtheorem{remark}[theorem]{Remark}
\newtheorem*{remarknotag}{Remark}

\topmargin -.25in

%\begin{document}
\maketitle

\section{Introduction}

For an  integrable function $f$
 on the circle  group $\bbT= \bbR/\bbZ$, and $k\in \bbZ$  we denote by
%$f_k\sphat{}\,\,
$\widehat f_k=\,\int_0^1 f(y)e^{-2\pi i ky} \,dy$
%\overline{\exp(2\pi i ky)}
%\exp(-2\pi i ky) \,dy$
the
%$k$th
Fourier coefficients  and
consider  the partial sum operators $S_n$ for the Fourier
series,
% given by
\Be{partialsum} S_n f(x) \equiv S[f](n,x)= \sum_{k=-n}^n
%f_k\sphat \,\,
\widehat f_k \,e^{2\pi i kx};\Ee
%^\!\hat{}
%\,\exp(2\pi ikx);\Ee
here
$n\in \bbN_0=\{0,1,2,\dots\}$.
The  celebrated
theorem by Carleson \cite{carleson66cgp} states that if $f$ is square
 integrable then
$S_n f$ converges to $f$ almost everywhere. Hunt
\cite{hunt68cfs} extended this result
 to $L^p(\bbT)$ functions, for $1<p<\infty$,  and  proved the inequality
\Be{carlesonhunt} \big\|\sup_n|S_n f|\big\|_{L^p(\bbT)} \le
C \,\|f\|_{L^p(\bbT)}
\Ee
for all $f\in L^p(\bbT)$;
see also  \cite{fefferman73pcf}, \cite{lacey00pbc}, and
 \cite{grafakos04bmd}
for other proofs  of this fact.

The purpose of this paper is to
strengthen the Carleson-Hunt  result for $L^p$ functions,
$1<p<\infty$, and
show that, for
 $r>\max\{2,p'\}$,  the (strong) $r$-variation of the sequence $\{S_n f(x)\}_{n\in
  \bbN_0}$ is finite for almost every $x\in [0,1]$.
This can be interpreted as a
 statement about the rate of convergence.
%in \eqref{carlesonhunt}.
To fix notation,
we consider real or complex valued  sequences  $\{ a_n\}_{n\in \bbN_0}$
 and define their
$r$-variation to be
\begin{equation}\label{definitionofvariation}
\|a\|_{V^r}
=\sup_K  \sup_{n_0 < \cdots < n_K} \Bigl(
\sum_{\ell = 1}^{K} |a_{n_{\ell}}  - a_{n_{\ell-1}} |^r
\Bigr)^{1/r}
\end{equation}
where the sup is taken over all $K$ and then over all
%finite
increasing sequences of
nonnegative integers $n_0 < \cdots < n_K$. Note that the variation norms
are monotone decreasing in the parameter $r$.
Next, for a sequence $F=\{F_n\}$ of Lebesgue measurable functions
one defines  the  $r$-variation of $F$ at $x$, sometimes denoted by
$\cV^rF(x)$ as the $V^r$ norm of the sequence $\{F_n(x)\}$.
We denote
the  $r$-variation of the sequence $F_n =S_nf$
 by
$\cV^r S[f]$.
%In a similar fashion
The variation norms and the  $r$-variation operator
 can be  defined in a similar fashion, if the index set $\bbN_0$ is
replaced by another subset
% $\cI$
of $\bbR$
(often  $\bbR^+$ or $\bbR$ itself).

Let $r':=r/(r-1)$, the conjugate exponent of $r$.
\begin{theorem} \label{maintheoremper}
Suppose $r > 2$ and $r' < p < \infty.$ Then, for every $f\in L^p(\bbT)$,
\begin{equation} \label{maintheoremperinequality}
\big\|S [f]\big\|_{L^p(V^r)} \leq C_{p,r} \|f\|_{L^p}.
\end{equation}
At the endpoint $p=r'$ a restricted weak type result holds;
namely, for any $f\in L^{r',1}(\bbT)$ the function
$\cV^r S[f]$ belongs to $L^{r',\infty}(\bbT)$.
\end{theorem}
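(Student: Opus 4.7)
The plan is to first reduce the periodic statement to its analog on the real line. By the transference principle (developed in one of the appendices), it suffices to prove that for $r>2$ and $r'<p<\infty$ the continuous partial sum operators
\[
S_N f(x) \;=\; \int_{-N}^{N} \widehat f(\xi)\, e^{2\pi i x\xi}\,d\xi, \qquad N>0,
\]
satisfy $\bigl\|\cV^r S[f]\bigr\|_{L^p(\bbR)}\le C_{p,r}\|f\|_{L^p(\bbR)}$, with a restricted weak type $L^{r',1}\to L^{r',\infty}$ inequality at the endpoint. Linearizing the variation by a measurable choice of a finite increasing sequence $\{N_\ell(x)\}$ at each point (which up to a factor of $2$ captures the supremum in \eqref{definitionofvariation}), and then by a standard limiting argument, we are reduced to bounding, uniformly in such choices, the $L^p$ norm of
\[
\Bigl(\sum_\ell \bigl|S_{N_\ell(x)}f(x)-S_{N_{\ell-1}(x)}f(x)\bigr|^r\Bigr)^{1/r}.
\]

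The next step is to split the variation into a \emph{long} piece, taken only over dyadic scales $N=2^j$, and a \emph{short} piece, measuring the variation within each dyadic frequency band $2^j\le N\le 2^{j+1}$. The long piece is essentially free: since $r\ge 2$ we have $V^r\subseteq V^2$, so the long variation is dominated by the classical Littlewood--Paley square function $\bigl(\sum_j |\Delta_j f|^2\bigr)^{1/2}$, which is bounded on $L^p$ for every $1<p<\infty$. The short piece, after modulation by the center frequency of the band (which commutes with variation), reduces to proving the variation inequality for a family of Carleson-type operators localized to a single dyadic frequency interval.

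The bulk of the work is therefore the short-variation estimate on a single dyadic band, which is attacked by the Fefferman/Lacey--Thiele time-frequency framework. One discretizes the operator via wave packets on Heisenberg tiles, groups tiles into trees and trees of trees, and linearizes the stopping times $N_\ell(x)$ as a choice of a tile at each point. The core new ingredient, replacing the $L^2$ tree estimate at the heart of the Carleson--Hunt proof, is a \emph{variation-norm tree estimate}: for a tree $T$ with top frequency $\xi_T$, the $r$-variation of the partial sum operator restricted to wave packets in $T$ should obey an $L^2$ (and localized $L^p$) bound in terms of a size and a density. This is where one invokes a Lépingle-type $r$-variational inequality ($r>2$) for martingale-like sums of wave-packet contributions, bridging the gap between the classical $L^2$ tree bound and the desired $V^r$ bound. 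The remaining organization into trees, together with the standard size/density iteration (Bessel-type energy estimates and $BMO$-style density lemmas), upgrades the single-tree inequality to the global bound for the range $p>\max\{2,r'\}$.

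The main obstacle is the tree estimate in $V^r$: the classical Carleson--Hunt argument exploits the fact that on a single tree the contributions are essentially a single Hilbert-transform type operation, but for the variation norm one must control all intermediate partial sums simultaneously. The restricted weak type endpoint at $p=r'$ is obtained by running the tile selection algorithm against the characteristic function of an exceptional set and combining Bessel-type energy bounds with the variational Lépingle inequality; here one uses that $r'<2$ forces one to work at the $L^2$ endpoint of the tree estimate, which is precisely where the Lépingle inequality is sharp.
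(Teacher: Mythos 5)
Your proposal diverges from the paper's actual proof in a substantive way, and there is a genuine gap in the step you call ``essentially free.''

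The paper does not perform a long/short split of the variation into dyadic-scale jumps plus intra-band jumps. Instead, after transference and linearization, it performs a Whitney decomposition of each interval $(\xi_{k-1}(x),\xi_k(x))$ into dyadic intervals $J\in\J_{\xi,\xi'}$, passes to wave packets adapted to multitiles with a three-part frequency structure $(\omega_l,\omega_u,\omega_h)$, and then runs the energy/density tree iteration directly on the full model operator. The variational L\'epingle inequality enters only \emph{inside} each tree (Proposition \ref{treeestimate}), controlling the $r$-variation of the dyadic smoothings $\psi_k*\bigl(\sum_{P\in T}\langle f,\phi_P\rangle\phi_P\bigr)$; it is not applied to a global ``long variation.''

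The long-variation step in your sketch is incorrect as reasoned. You claim that since $\|\cdot\|_{V^r}\le\|\cdot\|_{V^2}$, the $r$-variation of $\{S_{2^j}f\}_j$ is dominated by the Littlewood--Paley square function $\bigl(\sum_j|\Delta_jf|^2\bigr)^{1/2}$. But the $V^2$ norm is \emph{not} controlled by the square function of consecutive differences; in general $\|a\|_{V^2}\ge\bigl(\sum_j|a_{j+1}-a_j|^2\bigr)^{1/2}$ and can be much larger. For instance, for the monotone sequence $a_j=j^{2/5}$ one has $\sum_j|a_{j+1}-a_j|^2<\infty$ while $\|a\|_{V^r}\ge\sup_j|a_j-a_0|=\infty$ for every $r$. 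So ``square function bounded'' does not yield ``long variation bounded.'' The $r$-variation of the lacunary partial sums $\{S_{2^j}f\}$ is in fact true for $r>2$, but it requires its own L\'epingle-type argument (comparison to a smooth Littlewood--Paley family plus a square function estimate for the sharp-minus-smooth difference, as in \cite{bourgain89pet}, \cite{jkrw}, \cite{jones08svj}); it is not a free consequence of the square-function bound.

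The short-variation step is also not a clean reduction. After restricting to a band $[2^j,2^{j+1}]$, modulating, and rescaling, one still faces the full Carleson difficulty (modulation invariance puts all frequencies back in play), and additionally the $\ell^r$-in-$j$ summation of the per-band variations must be performed inside $L^p$. This does not reduce to a single Hilbert-transform-type operation per band, and it is not simpler than the original problem. The paper's Whitney decomposition and multitile structure is what effectively handles both the two-endpoint nature of the intervals $(\xi_{k-1},\xi_k)$ and the scale interactions simultaneously, without a separate long/short split.

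Your description of the remaining machinery (time-frequency trees, energy/density, L\'epingle at the tree level, $BMO$-style counting bounds for the energy increment) does match the paper's Sections \ref{energydensitysection}--\ref{mainargumentsection} at a high level, and the transference reduction to the line is as in Appendix \ref{transference}. But the range you state, $p>\max\{2,r'\}$, is strictly smaller than the theorem's range $p>r'$ (since $r>2$ forces $r'<2$); obtaining the full range $r'<p<2$ is part of what requires the more careful $L^q$ tree estimates and the $BMO$-interpolation of $\sum_T\indic{2^\ell I_T}$ in the paper's argument.
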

%As usual, the restricted weak type result in the theorem can be expressed
%as follows: For all measurable functions $f$ and sets $F\subset\bbT$ with
%$|f| \leq \indic{F}$, we have
%\Be{rwtperineq}
%\lambda^{r'} \meas\big(\{x\in [0,1] :  \cV^rS[f](x) \geq \lambda\}\big)
%\leq C_{r} |F|.
%\Ee
It is immediate that \eqref{maintheoremperinequality} for $r<\infty$
implies a quantitative form of almost everywhere convergence of Fourier series, 
improving over the standard qualitative result utilizing the weaker
$r=\infty$ inequality and convergence on a dense subclass of functions.

As will be
discussed in Section \ref{counterexamplesection}, the conditions
on the exponents in \eqref{maintheoremperinequality}
are sharp. Moreover, in the endpoint case $p=r'$ the Lorentz space
$L^{r',\infty}$ cannot be replaced by a smaller Lorentz space.
%It is currently open whether the restricted weak type
%($L^{r',1}\to L^{r',\infty}$)
%inequality could be upgraded to a weak type
%($L^{r'}\to L^{r',\infty}$)
%inequality.

%sharp. , and in the range of Lorentz norms no better than the
%stated weak-type estimate is possible in
%Theorem \ref{rwttheorem}.
%

By standard transference arguments (see Appendix \ref{transference})
%  (a variant of arguments in \cite{leeuw}, \cite{steinweiss} and
%\cite{kenig80mod})
Theorem \ref{maintheoremper} is implied by
a result on the  partial (inverse) Fourier integral of a Schwartz
function
$f$ on $\rea$
is defined as
$$\CS[f](\xi,x) = \int_{-\infty}^{\xi} \widehat{f}(\eta)
e^{2\pi i \eta x}
%\exp(2\pi i\eta x)
\ d\eta
$$ where $\widehat{f}(\eta)= \int f(y)
e^{-2\pi i y\eta}\, dy$
%\exp(-2\pi i  y\eta)\,dy$
 defines the Fourier transform of $f$.

%The version on the real line is as follows.
\begin{theorem} \label{maintheorem}
Suppose $r > 2$.
Then $\cS$ extends to a bounded operator $\cS: L^p\to L^p(V^r)$ for
$r'<p<\infty$.
Moreover $\cS$ maps $L^{r',1}$ boundedly to $L^{r',\infty}(V^r)$.
%\begin{equation} \label{maintheoreminequality}
%\|\CS[f]\|_{L^p(V^r)} \leq C_{p,r} \|f\|_{L^p(\rea)}.
%\end{equation}
%For $p=r'$ we have
%\begin{equation} \label{rwtineq}
%\|\CS[f]\|_{L^{r',\infty}(V^r)} \leq C_{p,r} \|f\|_{L^{r',1}(\rea)}.
%\end{equation}
\end{theorem}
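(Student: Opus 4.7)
The plan is to run the time--frequency (Lacey--Thiele style) proof of the Carleson--Hunt theorem, with the sup norm replaced throughout by the $V^r$ norm, and then to interpolate against an $L^2 \to L^2(V^r)$ endpoint obtained for $r>2$ from Lépingle's variation inequality for martingales. First I would discretize: write $\cS[f](\xi,x)$ as a superposition of model sums over (multi)tiles $P = I_P \times \om_P$, each contributing a wave packet $\langle f,\phi_P\rangle \phi_P(x)$ whose frequency lies in $\om_P$; then $\cS[f](\xi,x)$ at a given level $\xi$ is the sum of those wave packets with $\om_P$ lying below $\xi$. To pass to the variation norm, linearize: fix measurable increasing $\xi_0(x)<\xi_1(x)<\dots<\xi_K(x)$ and reduce \eqref{maintheoremperinequality} to an $L^p$ bound for
\[
\Bigl(\sum_{\ell=1}^K |\cS[f](\xi_\ell(x),x)-\cS[f](\xi_{\ell-1}(x),x)|^r\Bigr)^{1/r}
\]
uniformly in the chosen partition. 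By real interpolation (using the $L^\infty$ bound that is trivial at the sup-norm level and Lépingle at $L^2$) it suffices to prove restricted weak type $(r',r')$ with $V^r$ on the target.

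The core is a \emph{tree estimate in variation norm}. Given a tree $T$ with top $(I_T,\xi_T)$, split frequencies $\xi_\ell(x)$ according to whether they lie above or below $\xi_T$; a modulation reduces each case to two kinds of sums that were already studied in the classical Carleson proof. For the resulting differences across consecutive $\xi_\ell$ I would use the standard trick that the $V^r$ norm with $r>2$ is dominated by a \emph{long variation} at dyadic frequency scales plus a \emph{short variation} inside each dyadic annulus. The long variation is handled by a square function controlled by the tree energy, since telescoping over dyadic scales leaves partial sums of wave packets in disjoint frequency annuli, to which Littlewood--Paley/$TT^*$ and the tile orthogonality apply. The short variation, being a sum over $O(1)$ scales of wave packets at comparable frequencies, is estimated via the Rademacher--Menshov inequality (valid precisely because $r>2$), which trades the arbitrary choice of $\xi_\ell$ for an $\ell^2$ sum over a binary tree of partial sums; this is the step where the restriction $r>2$ becomes essential, and it is the main obstacle.

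With the tree estimate in hand, the assembly is by now standard Carleson--Hunt machinery. One organizes tiles into forests by an energy/mass (BMO) selection algorithm, proves an energy lemma (Bessel-type inequality) and a density (mass) lemma for the linearized frequency function $\xi(x)$, and then for a generic $f=\indic{F}$ and test set $E$ one distributes tiles into forests at dyadic energy/mass levels and sums the tree estimates. Because the tree estimate has been upgraded to $V^r$, the combined bound controls the variation norm on the exceptional-set-deleted portion of $E$, yielding the restricted weak type $(r',r')$ claim. Interpolation with the easy $L^\infty$ (or $L^{2+\ep}$ from Lépingle) endpoint then produces strong type bounds for all $r'<p<\infty$, establishing \eqref{maintheoremperinequality} and completing Theorem~\ref{maintheorem}.

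Two points I would be careful about: (i) the short variation step requires wave packet estimates that control the $V^r$ norm of truncated tree sums uniformly in the truncation, which must be proved independently of the number of scales; (ii) the transference argument from the integral version to the series version (Theorem~\ref{maintheoremper}) needs to preserve the $V^r$ structure, which is standard but needs to be verified in Appendix~\ref{transference} since variation norms do not linearize in the usual way.
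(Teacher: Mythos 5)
Your high-level plan — discretize into wave packets, linearize the variation norm, prove a tree estimate in variation norm, and assemble via energy/density forest selection — matches the paper's architecture. But there are two substantial problems.

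First, the interpolation scheme does not work as stated. There is no ``easy $L^\infty$ endpoint'' for $\cS$ in $V^r$: the sup-norm Carleson operator is already unbounded on $L^\infty$, and $\cS:L^{2+\ep}\to L^{2+\ep}(V^r)$ is precisely what must be proved, not a free input from L\'epingle. L\'epingle's inequality applies to martingales and to convolution families, not directly to the Carleson operator. The paper instead proves, for each $p\ge r'$ simultaneously, the restricted weak type bound $L^{p,1}\to L^{p,\infty}(V^r)$, and then gets strong type for $r'<p<\infty$ by interpolating between different $p$'s.

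Second, and more seriously, the tree estimate and the assembly as you sketch them would only yield the weaker range $r>\max\{2,p\}$, not $r>\max\{2,p'\}$. For the tree estimate, the paper does not need a long/short variation split or Rademacher--Menshov. The key observation is \eqref{intervalofscales}: on an $l$-overlapping tree, the set of scales $\{P\in T : \xi_{k-1}(x)\in\omega_l,\ \xi_k(x)\in\omega_h\}$ is a dyadic interval of scales, so the partial sums are exactly differences $e_T(\psi_{\ell_1}-\psi_{\ell_2})*\bigl(\sum_{P\in T}\langle f,\phi_P\rangle\phi_P\bigr)$ of Schwartz mollifications, and the variation is controlled by invoking the known L\'epingle/Bourgain estimate \eqref{varestpsi} for convolution families (plus the Hardy--Littlewood maximal function); the $l$-lacunary case is trivial because only one scale can contribute for each $x$. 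This gives an $L^q$ tree estimate \eqref{Lqfortree} with $q$ a free parameter, $1\le q\le 2$. The decisive step that you are missing is in the assembly: to reach $r>2$ one must control not just $\sum_T |I_T|$ at each energy level (the classical counting), but the $L^q$ norms of the counting functions $N_{j,\ell}=\sum_{T\in\T_j}\indic{2^\ell I_T}$, which is done via the BMO bound \eqref{etreebmobound} and its interpolation with the $L^1$ bound in Proposition \ref{energyincrementprop}, together with a second pass through the energy selection algorithm keeping track of \eqref{pisatree}. The authors explicitly record that standard Lacey--Thiele counting as you propose only gave them $r>p$ for $p>2$, and that this refinement was the stumbling block; your proposal reproduces the weaker argument, not the theorem as stated.
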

%At the endpoint $r'=p$ we have the result:
%\begin{theorem} \label{rwttheorem}
%Suppose $2 < r < \infty$ and $r'=p.$ Then for all measurable functions $f$ and sets $F$ with $|f| \leq 1_F$, we have
%\[\lambda^p |\{x : \|\CS[f](\cdot,x)\|_{V^r_\xi} \geq \lambda\}| \leq C_{p} |F|.\]
%\end{theorem}

Note that if in the above definition of the mixed $L^p(V^r)$ spaces
we
interchange the order between integration in the $x$ variable and taking the supremum over the choices of $K$ and the points $\xi_0$ to $\xi_K$
so that these choices become independent of the variable $x$, then the
estimates corresponding to Theorem
\ref{maintheorem} are weaker; they  follow  from  a square function
 inequality of Rubio de Francia \cite{rubio85lp} for $p\ge 2$,
see also  \cite{quek} for a related endpoint
result
for $p<2$,    and
\cite{lacey07rf} for a proof of Rubio de Francia's inequality
which is closer to the methods of this
paper.

While the concept of $r$-variation norm is at least as
old as Wiener's 1920s paper on quadratic
variation \cite{wiener24qv},  variational estimates
have been pioneered by D. L\'epingle (\cite{lepingle76lvd}) who proved them for martingales.
Simple proofs of  L\'epingle's result based on jump inequalities
have been given by Pisier and Xu \cite{pisier-xu} and
by Bourgain \cite{bourgain89pet},
and applications to other families of
operators in harmonic analysis such as families of
averages and singular integrals have  been considered  in
\cite{bourgain89pet}, and the subsequent papers 
\cite{jkrw}, \cite{cjrw-hilbert},
\cite{jones08svj} (\cf. the bibliography of \cite{jones08svj} for more
references).
%(see also \cite{jsw} and \cite{} for some expositions)
Bourgain \cite{bourgain89pet} used variation norm  estimates
(or related oscillation estimates which are intermediate in difficulty
between maximal and variation norm estimates)
to prove pointwise convergence results
without previous knowledge
that pointwise convergence holds for a dense subclass of functions.
Such dense subclasses of functions,
while usually available in the setting of analysis on Euclidean space,
are less abundant in the
ergodic theory setting. In Appendix \ref{ergodicappendix}
we demonstrate the use of Theorem \ref{maintheorem}
in the setting of Wiener-Wintner type theorems as developed in
\cite{lacey08wwt}. We note that the  Carleson-Hunt theorem has previously been generalized by using
other norms in place of the variation norm, see for example the use of
oscillation norms in \cite{lacey08wwt}, and the $M^*_2$
 norms in   \cite{demeter08bdr}, \cite{demeter08wmc}.

We are also  motivated by the fact that variation norms are in certain
situations more stable under nonlinear perturbation than supremum norms.
For example one can deduce bounds for certain $r$-variational lengths of curves in Lie groups from the corresponding lengths of the ``trace'' of the curves in
the corresponding Lie algebras, see Appendix \ref{vliegroupsection} for definitions and details. What we have in mind is proving Carleson type theorems
for nonlinear perturbations of the Fourier transform as discussed in
\cite{muscalu03ctc}, \cite{muscalu03cme}. Unfortunately
the naive approach fails and the ultimate goal remains unattained
since we only know the correlation between lengths of the trace and
the original curve for $r < 2$, while the variational Carleson theorem
only holds for $r > 2.$ Nonetheless, this method allows one to see
that a variational version of the Christ-Kiselev theorem
\cite{christ01wab} follows from a variational Menshov-Paley-Zygmund
theorem which we prove in Appendix \ref{vmpzsection}. The variational
Carleson inequality  can be viewed as an
endpoint in this theory.

Our proof of Theorem \ref{maintheorem} will follow the method of \cite{lacey00pbc} as refined in \cite{grafakos04bmd}. Naturally one has to invoke variation norm results in the setting of individual trees, 
which is achieved by adapting D. L\'epingle's result (\cite{lepingle76lvd}) to the
setting of a tree. The authors initially had a proof of the case $p>2$ and $r>p$ of Theorem 
\ref{maintheorem} more akin to \cite{lacey00pbc}, while improvement to $r>2$ for such $p$
provided a stumbling block. This stumbling block was removed by better accounting for trees of given 
energy, as described in the remarks leading to Proposition \ref{energyincrementprop}.
In Section \ref{mosection} we reduce the problem to that of bounding certain model operators which map $f$ to linear combinations of wave-packets associated to collections of multitiles. In Section \ref{treeestimatessection} we bound the model operators when the collection of multitiles is of a certain type called a tree; this bound is in terms of two quantities, energy and density, which are associated to the tree. These quantities are defined in Section \ref{energydensitysection} and an algorithm is given to decompose an arbitrary collection of multitiles into a union of trees with controlled energy and
density. Section \ref{auxiliarysection} contains two auxiliary estimates.
All these ingredients are combined to complete the proof
in Section \ref{mainargumentsection}.
% Finally, a variational estimate which is crucial for the proof of the model operator bound for trees is given in Appendix \ref{variationalestimatesection}.

The authors would like to thank the anonymous referee for valuable comments
on the submitted draft of the paper.

\medskip

{\it Some notation.} For two quantities $A$ and $B$ let $A\lc B$ denote the statement that
$A\le CB$ for some constant $C$ (possibly depending on the parameters $p$ and $r$). The Lebesgue measure of a set $E$ is denoted either
 by $|E|$ or by $\meas(E)$. The indicator function of $E$ is
 denoted by  $\indic{E}$. For a subset $E\subset \bbR$ and $a\in \bbR$ we set
$a+E=E+a=
\{x:x-a\in E\}$.
If $I$ is a finite    interval $I$ with center $c(I)$   we denote by
$CI$
 the $C$ dilate of $I$ {\it with respect to
its center}, i.e. the  set of all $x$ for which $c(I)+
 \frac{x-c(I)}{C}\in I$.

\section{Optimality of the exponents} \label{counterexamplesection}
Since 
 Theorem 
 \ref{maintheorem} implies
 Theorem \ref{maintheoremper} (\cf.  Appendix \ref{transference})
we have to discuss the
 optimality only for the Fourier series case.
The necessity of the condition $r>2$  follows from a 
corresponding result for the  Cesaro means; its proof   by Jones and Wang
 \cite{jones04vif} was 
based on a probabilistic result of  Qian  \cite{qian98pvp}.

We show the  necessity of the condition $p > r'$ in Theorem 
\ref{maintheoremper}.
%which also shows that the $L^{r',1}\to L^{r',s}$ boundedness fails 
%for $s<\infty$. 
Let $$D_n(x)= \sum_{k=-n}^n e^{2\pi i kx}=\frac{\sin((2n+1)\pi
  x)}{\sin(\pi x)},$$ the Dirichlet kernel,
and let $f^N$ be the de la Vall\'ee-Poussin kernel which is defined 
by 
$f^{N}=2K_{2N+1}-K_N$ via the Fej\'er kernel
$K_N=(N+1)^{-1}\sum_{j=0}^N D_j$.  
Then $[\widehat {f^N}]_k=1$ for $|k|\le N+1$  and thus 
$S_n f^N = D_n$ for $|n|\le N+1$. We have  $\|f^N\|_{L^1(\bbT)}=O(1)$,
and  $\|f^N\|_\infty= O(N)$ and therefore
$\|f^N\|_{L^{p,q}(\bbT)}= O(N^{1-1/p})$.

Let $N\gg 10^3$ and $ 8N^{-1} \le x\le  1/8$. Let $K=K(x)$ be the largest
integer $< Nx$.
Then for $0\le k< 2K(x)$ there are integers  $n_k(x)\le N$ so that
$(2n_k(x)+1)x \in (\frac14+k, \frac 34+k)$, in particular
$n_k(x)<n_{k+1}(x)$
for $k<2K(x)-1$.
Observe
$\sin((2n_{2j}(x)+1)\pi
  x) >\sqrt 2/2$ and 
$\sin((2n_{2j+1}(x)+1)\pi
  x) <-\sqrt 2/2$ for $0\le j\le K(x)-1$. 
This gives 
\begin{multline*}
\Big(\sum_{j=0}^{K(x)-1}\big| S_{n_{2j+1}(x)}f^N - S_{n_{2j}(x)}f^N
\big|^r\Big)^{1/r} 
=
\Big(\sum_{j=0}^{K(x)-1}\big| D_{n_{2j+1}(x)} - D_{n_{2j}(x)} \big|^r\Big)^{1/r} 
\\\ge \frac{K(x)^{1/r}\sqrt 2}{\sin(\pi x)} \ge  c N^{1/r} x^{1/r-1},
\end{multline*}
and this implies for large $N$
\[\frac{\|{\cV}^r(S f^N)\|_{L^{p,s}}}{\|f^N\|_{L^{p,1}}}\ge c_{p,s} 
\begin{cases} 
N^{\frac 1p -\frac 1{r'}} &\text{  if $p<r'$, }
\\(\log N)^{1/s} &\text{  if $p=r'$.}
\end{cases}
\]
Thus 
the $L^p\to L^p(V^r)$ boundedness does not hold  for $p<r'$; moreover 
the $L^{r',1}\to L^{r',s}(V^{r})$ boundedness does not hold for
$s<\infty$.

\section{The model operators}\label{mosection}
We shall show  in appendix \ref{transference} how to deduce Theorem
\ref{maintheoremper} from Theorem \ref{maintheorem}.
To start the proof of the main  Theorem \ref{maintheorem},
we describe some reductions to  model operators involving wave packet
decompositions.

First, by interpolation it suffices to prove for $p\ge r'$ the
restricted weak type $L^{p,1}\to L^{p,\infty}(V_r)$ bound. Next, by
  the monotone  convergence theorem it suffices to estimate
$L^{p,\infty}(V_r)$ on finite $x$-intervals $[-A,A]$, with constant
  independent of $A$.
By another application of the monotone convergence theorem it suffices, for any fixed $K$,  to prove
the
$L^{p,1}\to L^{p,\infty}([-A,A])$ bound for
\Be{fixedK}
\sup_{\xi_0\le \cdots\le \xi_{K} }\Big(\sum_{\ell=1}^K\big| \cS f(\xi_\ell,x)-\cS f(\xi_{\ell-1},x)\big|^r\Big)^{1/r}
\Ee
where the sup is taken over all $(\xi_0,\dots,\xi_K)$ with $\xi_{\ell-1}\le \xi_\ell$ for $\ell=1,\dots, K$.
Moreover, by the density of Schwartz functions in $L^{p,1}$ it
suffices to prove a uniform estimate  for all
Schwartz functions.
Note that for any Schwartz function $f$ the
expression $\cS[f](\xi,x)$ depends continuously  on
$(\xi,x)$.
Therefore it suffices to bound the expression analogous to \eqref{fixedK} where
we impose the strict inequality $\xi_{\ell-1}< \xi_\ell$ for $\ell=1,\dots, K$.
Moreover, by the continuity it suffices for each finite set $\Xi\subset \bbR$ to prove bounds for this expression under the assumption that
the $\xi_\ell$ belong to $\Xi$, and we may also assume  that
$\Xi$ does not contain
any numbers of the form $n2^m$ with
$m, n\in \bbZ$ ({\it i.e.} no endpoints of dyadic intervals).

We may now linearize the variation norm. Fix $K\in \bbN$,
measurable real valued functions $\xi_{0}(x) < \ldots < \xi_{K}(x)$,
with values in $\Xi$ and measurable complex valued functions $a_1(x)$,
$\ldots$, $a_{K}(x)$ satisfying
$$|a_1(x)|^{r'} + \ldots + |a_{K}(x)|^{r'} = 1.$$
Let
\[
\CS'[f](x) = \sum_{k = 1}^{K}
\big(\CS[f](\xi_{k}(x),x) - \CS[f](\xi_{k-1}(x),x)\big)\,a_{k}(x)\, .
\]
Theorem \ref{maintheorem} will now follow  from
the estimate
\begin{equation} \label{linearizedtheorem}
\big\|\CS'[f]\big\|_{L^{p,\infty}(\rea)} \leq C \|f\|_{L^{p,1}(\rea)}
\end{equation}
where $C$ is independent of $K$, $\Xi$  and the linearizing functions,
 and where $f$ is any Schwartz function. Finally, in order to prove
\eqref{linearizedtheorem} for any fixed $\Xi$
we may assume that  $\widehat f$ has compact support in $\bbR\setminus
 \Xi$ since the space of Schwartz functions with this
property  is dense in
 $L^{p,1}$, $1<p<\infty$.

Let $\D = \{[2^k m, 2^{k}(m+1)) : m,k \in \Z\}$ be the set of dyadic intervals. A {\it tile} will be any rectangle $I \times \omega$ where $I,\omega$ are dyadic intervals, and $|I||\omega| = 1/2.$
We will write $\CS'$ as the sum of wave packets adapted to tiles, and then decompose the operator into a finite sum of model operators by sorting the wave packets into a finite number of classes.
For each $k$,
\[
\CS[f](\xi_{k},x) - \CS[f](\xi_{k-1},x) = \int \indic{(\xi_{k-1},\xi_{k})} (\xi) \widehat{f}(\xi) e^{2\pi i \xi x}\ d\xi.
\]
To suitably express the difference above as a sum of wave packets, we will first need to construct a partition of $\indic{(\xi_{k-1},\xi_{k})}$ adapted to certain dyadic intervals. The fact that $(\xi_{k-1},\xi_{k})$ has two boundary points instead of the one from $(-\infty,\xi_k)$ will necessitate a slightly more involved discretization argument than that in \cite{lacey00pbc}.

For any $\xi < \xi'$, let $\J_{\xi,\xi'}$ be the set of maximal dyadic
intervals $J$ such that $J \subset (\xi,\xi')$ and
$\dis(J,\xi) \geq |J|$,
$\dis(J,\xi') \geq |J|.$
Let $\nu$ be a $C^\infty$ function from $\rea$ to $[0,1]$ which vanishes on $(-\infty,-10^{-2}]$, is identically equal to $1$ on $[10^{-2},\infty)$, and so
that  $\nu'(x)\ge 0$ for $-10^{-2}<x< 10^{-2}$. Given an interval $J = [a,b)$, and $i \in \{-1,0,1\}$, define
\[
\varphi_{J,i}(\xi)
= \nu\left(\frac{\xi - a}{2^i(b-a)}\right) - \nu\left(\frac{\xi-b}{b-a}\right).
\]
Thus if $c(J)=\frac{a+b}2$, the center of $J$, then
\Be{nu-functions}\varphi_{J,i}(\xi)= \nu_i(\tfrac{\xi-c(J)}{|J|})
\text{ where } \nu_i(\eta)= \nu(2^{-i}(\eta+\tfrac 12))-\nu(\eta-\tfrac 12)\,,
\Ee
and we notice  for $i \in \{-1,0,1\}$  both
$\nu_i$ and $\sqrt {\nu_i}$ are $C^\infty$ functions  supported in
$[-\frac {13}{25}, \frac{13}{25}]$ (more precisely in $[-\frac {13}{25},
\frac{51}{100}]$).
 Hence
$\varphi_{J,i}$ is supported on a $\frac{26}{25}$-dilate of $J$ with
respect to its center.

For each  $J \in \J_{\xi,\xi'}$, one may check that there is a unique
interval $J' \in \J_{\xi,\xi'}$ which lies strictly to the left of $J$
and satisfies $\dis(J',J) = 0,$ and one may check that $J'$ has
size $|J|/2,|J|,$ or $2|J|$. We define $\varphi_J = \varphi_{J,i(J)}$ where $i(J)$ is chosen so that $|J'|=2^{i(J)}|J|$. Then
\Be{partitionofunity}
\indic{(\xi,\xi')}(\eta)\widehat f(\eta) = \sum_{J \in \J_{\xi,\xi'}}
\varphi_{J}(\eta)
\widehat f(\eta).
\Ee
Since we assume that $\widehat  f$ is compactly supported in $\bbR\setminus \Xi$ we see that that for every pair
$\xi<\xi'$ with $\xi, \xi'\in \Xi$ only a finite number of dyadic intervals
$J\in \bold J_{\xi,\xi'}$ are relevant in \eqref{partitionofunity}.

We now write each multiplier $\varphi_J$ as the sum of wave packets. For every tile $P = I \times J$, define $$\phi_{P}(x) = |I|^{1/2}\,
\cF^{-1}[\sqrt{\varphi_{J}}](x - c(I))$$ where $c(I)$ is the center of  $I$ and $\cF^{-1}$ denotes the inverse Fourier transform.
For each $J$, we then have
\Be{expansion}
\sum_{|I| = 1/(2|J|)}\<f,\phi_{I \times J}\>
\widehat{\phi_{I \times J}} = \widehat{f}\varphi_{J}.
\Ee
To see this we use a Fourier series expansion
(\cf.  \cite{thiele06wpa}). We  first observe that
$\widehat{\phi_P}(\xi) =\sqrt{|I|} \sqrt{\varphi_J(\xi)}
 e^{-2\pi i c(I)\xi}$
% \exp(-2\pi i c(I)\xi)$
and use $\<f,\phi_{P}\>=\<\widehat f,\widehat{\phi}_{P}\>$. Now let
parametrize
the centers of the dyadic intervals $I$  of length $L$  by
$-(k-\frac 12)L$, $k\in \bbZ$.
Set $g_J(\omega): =[\sqrt{\varphi_J} \widehat f](c(J)+L^{-1}\omega)
e^{\pi i\omega}$ and note that $g_J$ is supported in $[-\frac{13}{50},
\frac{13}{50}]$. The left hand side of \eqref{expansion}
is equal to
\begin{align*}
&\sqrt{\varphi_J(\xi)}
\sum_k \int  \widehat f(\eta) \sqrt{\varphi_J(\eta)}
%\exp( -2\pi i (k L-\tfrac L2) \eta)
e^{ -2\pi i (k L-\tfrac L2) \eta}
L \,d\eta
\,
%\exp( 2\pi i (k L-\tfrac L2) \xi)
e^{ 2\pi i (k L-\tfrac L2) \xi}
\\&=\sqrt{\varphi_J(\xi)}
%\exp (-\pi i L(\xi-c_J))
e^{-\pi i L(\xi-c(J))}
\sum_{k\in \bbZ} \int_{-1/2}^{1/2} g_J(\omega)
%\exp(-2\pi ik\omega) \,
e^{-2\pi ik\omega} \,
d\omega  \,
%\exp(2\pi i k L (\xi-c_J))
e^{2\pi i k L (\xi-c(J))}
\\&= \sqrt{\varphi_J(\xi)}
%\exp (-\pi i L(\xi-c_J))
e^{-\pi i L(\xi-c(J))}
g_J (L(\xi-c(J))) \,=\,
\widehat f(\xi)\varphi_J(\xi)
\end{align*}
which gives \eqref{expansion}. This in turn
yields the representation of $\cS'[f]$ in terms of wave packets:
\Be{wavepacketrep}
\CS'[f](x) = \sum_{k = 1}^{K} \Big(\sum_{J \in \J_{\xi_{k-1}(x),\xi_{k}(x)}} \sum_{|I| =
(1/(2|J|))}\<f,\phi_{I \times J}\>\phi_{I \times J}\Big)a_{k}(x).
\Ee

For the function $f$ under consideration the above Fourier series expansion
converges in  $L^2$-Sobolev spaces of arbitrary high order and thus
 the convergence in \eqref{wavepacketrep} is uniform for  $x\in
 [-A,A]$. Therefore it suffices,  for any {\it finite}
 family $\fP$ of tiles, to consider
the
operator $\cS''$ defined by
\Be{wavepacketrepfin}
\CS''[f](x) = \sum_{k = 1}^{K} \Big(\sum_{
\substack {(I,J) \in \fP \\
J \in \J_{\xi_{k-1}(x),\xi_{k}(x)}}}\<f,\phi_{I \times J}\>
\phi_{I \times J}(x)\Big)a_{k}(x).
\Ee

The wave packets will be sorted into a finite number of classes, each
well suited for further analysis. Sorting is accomplished by dividing
every $\J_{\xi,\xi'}$ into a finite number of disjoint sets. These
sets will be indexed by a fixed subset of $\{1,2,3\}
\times\{1,2,3,4\}^2\times\{\text{left},\ \text{right}\}.$
Specifically, for each $(m,n,side) \in
\{1,2,3,4\}^2\times\{\text{left},\ \text{right}\}$, we define

\begin{itemize}
\item{$\J_{\xi,\xi',(1,m,n,side)}=$} $\{J \in \D : J \subset (\xi,\xi')$, $\xi$ is in the interval $J - (m+1)|J|,$ $\xi'$ is in the interval $J + (n+1) |J|$, and $J$ is the $side$-child of its dyadic parent$\}$.
\item{$\J_{\xi,\xi',(2,m,n,side)}=$} $\{J \in \D : J \subset (\xi,\xi')$, $\xi$ is in the interval $J - (m+1) |J|,$ $\dis(\xi',J) \geq n|J|,$ and $J$ is the $side$-child of its dyadic parent$\}$.
\item{$\J_{\xi,\xi',(3,m,n,side)}=$} $\{J \in \D : J \subset (\xi,\xi')$, $\dis(\xi,J) > m|J|,$ $\xi'$ is in the interval $J + (n+1) |J|$, and $J$ is the $side$-child of its dyadic parent$\}$.
\end{itemize}

We will choose $R \subset \{1,2,3\} \times\{1,2,3,4\}^2\times\{\text{left},\ \text{right}\}$ so that for each $\xi,\xi'$, the collection $\{\J_{\xi,\xi',\rho}\}_{\rho \in R}$ is pairwise disjoint and $\J_{\xi,\xi'} = \cup_{\rho \in R} \J_{\xi,\xi',\rho}.$ We will also assume that for each $\rho \in R$ there is an $i(\rho) \in \{-1,0,1\}$ such that $|J'| = 2^{i(\rho)}|J|$ for every $\xi < \xi'$, $J \in \J_{\xi,\xi',\rho}$ and $J' \in \J_{\xi,\xi'}$ with $J'$ strictly to the left of $J$ and $\dis(J,J') = 0.$ One may check that these conditions are satisfied, say, for
\begin{multline*}
R = \{(1,2,1,\text{left}),(1,2,2,\text{left}),(1,3,1,\text{left}),\\(1,3,2,\text{left}),(2,1,1,\text{left}),(2,1,1,\text{right}),(2,2,1,\text{right}),\\(3,4,1,\text{left}),(3,3,1,\text{right}),(3,4,2,\text{left})\}.
\end{multline*}

It now follows that
\[
\CS''[f] = \sum_{\rho \in R} \CS^\rho[f]
\]
where
\[
\CS^\rho[f](x) = \sum_{k = 1}^{K} \bigg(\sum_{\substack{(I\times J) \in \fP
\\J \in \J_{\xi_{k-1}(x),\xi_{k}(x),\rho}}}\<f,\phi_{I \times J}\>\phi_{I \times J}\bigg)a_{k}(x).
\]

It will be convenient to rewrite each operator $\CS^\rho$ in terms of
{\it multitiles}. A  multitile will be a subset of $\rea^2$ of
the form $I \times \omega$ where $I \in \D$ and where $\omega$ is the
union of
three intervals $\omega_l,\omega_u,\omega_h$ in $\D$. For each
$\rho = (i,m,n,side) \in R$, we consider a set of $\rho$-multitiles which is parameterized by $\{(I,\omega_u) : I,\omega_u\in\D, |I||\omega_u| = 1/2, \text{\ and\ }\omega_u\text{\ is the\ }side\text{-child of its parent}\}.$ Specifically, given $\omega_u = [a,b)$
\begin{itemize}
\item{If $\rho = (1,m,n,side)$ then} $\omega_l = \omega_u - (m+1)|\omega_u|$ and $\omega_h = \omega_u + (n+1)|\omega_u|.$
\item{If $\rho = (2,m,n,side)$ then} $\omega_l = \omega_u - (m+1)|\omega_u|$ and $\omega_h = [a + (n+1)|\omega_u|,\infty).$
\item{If $\rho = (3,m,n,side)$ then} $\omega_l = (-\infty, b - (m+1)|\omega_u|)$ and $\omega_h = \omega_u + (n+1)|\omega_u|.$
\end{itemize}
For $i=1,2,3$ we shall say that $\rho$ is an $i$-index if
$\rho = (i,m,n,side)$.
For every $\rho$-multitile $P$, let $a_P(x) = a_k(x)$ if $k$ satisfies $1 \leq k \leq K$ and $\xi_{k-1}(x) \in \omega_l$ and $\xi_k(x) \in \omega_h$ (such a $k$ would clearly be unique), and $a_P(x) = 0$ if there is no such $k$. Then,
%using $\P_\rho$ to denote the set of $\rho$-multitiles for which,
%we have
\[
\CS^\rho[f](x) =  \sum_{P \in \P_\rho} \<f,\phi_P\>\phi_{P}(x) a_{P}(x)
\]
where, for each $\rho$-multitile $P$, $\phi_P(x) = \sqrt{|I|}\cF^{-1}[
\sqrt{\varphi_{\omega_u,i(\rho)}}](x - c(I))$ and
$\P_\rho$ denotes the set of all  $\rho$-multitiles for which $I\times \omega_u$ belongs to $\fP$.

Inequality (\ref{linearizedtheorem}) and hence Theorem \ref{maintheorem} will then follow after proving the bound
\begin{equation} \label{csrhobound}
\|\CS^\rho[f]\|_{L^{p,\infty}} \lc \|f\|_{L^{p,1}}
\end{equation}
for each $\rho \in R$.
We shall only give the proof of this estimate for the case that $\rho$ is a
$1$-index  or $\rho$ is a $2$-index, and the case where $\rho$ is a $3$-index can be deduced by symmetry considerations.
Indeed, if $P=(I,\om_u):=([a,b), [c,d))$ and $\widetilde P:= ([-b,a), [-d,c))$
then
$\inn{f}{\phi_P}\phi_P=
\inn{f(-\cdot)}{\widetilde \phi_{\widetilde P}}\widetilde \phi_{\widetilde P}$
where, in the definition of the
$\widetilde \phi_{\widetilde P}$ the function
$\nu_i$ in \eqref{nu-functions}
is replaced with $\nu_i(-\cdot) $ (both are supported in $(-\frac {13}{25},
\frac {13}{25})$).
Now reflection sends a half open
interval $[a,b)$  to a half open interval $(-b,-a]$, however this
plays no role in our   symmetry argument if, as we do,
we assume that the set $\Xi$
does not contain endpoints of dyadic intervals.
We then see that the estimation of $S^\rho[f]$ for
$\rho=(3,m,n,side)$ is equivalent to the estimation of a
$S^{\tilde\rho}[f(-\cdot)]$  where the corresponding set $\{P\}$ of multitiles
is replaced with a set $\{\widetilde P\}$  of index
$\tilde \rho=(2,n,m,\text{\it opposite side})$ and
the set $\Xi$ is replaced with $\{\xi: -\xi\in \Xi\}$.
Beginning with \eqref{wavepacketadaptedbound}
both $\nu_i$ or $\nu_i(-\cdot)$ are allowed in
 the definition of  the functions $\varphi_{J,i}$ and $\phi_P$.

%The argument for the case $\rho = (3,m,n,side)$ is analogous
%\footnote {\tt Can one deduce the case three from the case two, by symmetry and %the fact that we can assume that $x$ and the $t$'s are not endpoints of dyadic %intervals?}
 %to that for the case $\rho = (2,m,n,side)$, so below we will assume
%$\rho=(2,m,n,side)$ in which case we say that $\rho$ is a $2$-index or $\rho = %(1,m,n,side)$ in which case we say that $\rho$ is a $1$-index.

By the usual characterization of $L^{p,1}$ as superpositions of
functions bounded by characteristic functions it suffices to show
that
\[
\meas\big(\{x: |S^\rho [f](x)|>\lambda\}\big) \leq C^p \la^{-p}  |F|
\]
where  $F \subset \rea$, $|F|>0$,  $|f| \leq \indic{F}$, $\lambda > 0$, $2 < r < \infty$, and $r' \leq p < (1/2 - 1/r)^{-1}.$
This is accomplished by  proving that
for every measurable  $E \subset \rea,$
\begin{equation} \label{halfgbound}
\meas\Big(\big\{x\in E: |S^\rho[f](x)| > C \big(|F|/|E|\big)^{1/p}
 \big\}\Big)
 \leq |E|/2.
\end{equation}
Indeed, if we set $E_\la=\{x: |S^\rho [f](x)|>\lambda\}$ then by the
finiteness of the set of tiles under consideration the set
$E_\la$ has a priori finite measure. If $|E_\la| \leq C^p \la^{-p}  |F|$
then there is nothing to prove.
If the opposite inequality $|E_\la| > C^p \la^{-p}  |F|$ were true
%(with $|F|>0$)
then $\la> C(|F|/|E_\la|)^{1/p} $ and inequality
\eqref{halfgbound} applied to $E=E_\la$ would yield that $|E_\la|\le
|E_\la|/2$, a contradiction.

We finally note that, after possibly rescaling, we may assume
that $1 \leq |E| \leq 2$ in \eqref{halfgbound}.
The next four  sections will be devoted to the proof of  inequality
\eqref{halfgbound} in this case.

\section{Energy and density} \label{energydensitysection}
Recall that $S^{\rho}[f](x)=
\sum_{P} \<f,\phi_P\>\phi_{P} a_{P}$
where $P$ ranges over an arbitrary finite collection of
$\rho$-multitiles, $\rho$ is a $1$ or $2$-index. It is our goal to
show \eqref{halfgbound} and for this and the next chapter we fix the
function $f$ with $|f|\le \indic{F}$ and the set $E$.

Fix $1 \leq C_3 < C_2  < C_1$, with $C_2\in \bbN$,  such that for every multitile $P$,
\begin{equation*}
\begin{gathered}
\text{supp}(\widehat{\phi}_P) \subset C_3 \omega_u
\\
C_2 \omega_u \cap C_2 \omega_l = \emptyset,\qquad  C_2 \omega_u \cap
\omega_h = \emptyset,
\\
 C_2 \omega_l \subset C_1 \omega_u, \qquad C_2 \omega_u \subset C_1
 \omega_l;
\end{gathered}
\end{equation*}
recall that dilations of finite
 intervals are with respect to their center.
%We also require that $C_2$ is an integer.
One may check that the values $C_3 = 11/10, C_2 = 2,$ and $C_1 = 12$
 satisfy all  these properties.

The wave packet  is adapted 
to the multitile $P$. As $\widehat \phi_P$ is compactly supported (in
$ C_3 \omega_u$) the function $\phi_P$ cannot have compact support,
but as a replacement we have the following bounds
involving
\[
w_{I}(x): =  \frac{1}{|I|}\left(1 + \frac{|x - c(I)|}{|I|}\right)^{-N}
\]
for a fixed large $N\gg 10$; namely
% in the
%following sense.  For each $P,$ $\hat{\phi}_P$ is supported on the
%interval
%$\frac{11}{10}\omega_u$ (with  same center as $\omega_u$ and
%$\frac{11}{10}$ the diameter).
\begin{equation} \label{wavepacketadaptedbound}
\Big|\frac{d^n}{dx^n} \big(\exp(-2\pi i c(\omega_u) \cdot \big)\phi_P\big)(x)\Big| \leq C'(n) |I|^{(1/2) -n} |w_I(x)|
\end{equation}
for each  $n \geq 0.$

We are working with a given finite set of $\rho$-multitiles  $\P$
(with $\rho$ a $1$- or $2$-index) and we let $M_\circ=M_\circ(\P)$ be the smallest integer $M$ for which all tiles are contained in the square
$[-2^M, 2^M]^2$. Throughout this paper  we fix
$$\cXtop=\{\eta: |\eta|\le C_1 2^{M_\circ+10},\,\,
\eta=
n 2^{-M_\circ-10}, \text{ for some $n\in \bbZ$ } \}$$
as the set of admissible  top-frequencies for trees, as 
in the following definition.

\begin{definition}
Consider a triple    $\cT=(T_\cT, I_\cT, \xi_\cT)$, 
 with   a set of multitiles $T_\cT$, a dyadic interval  $I_\cT
\subset [-2^{M_\circ},2^{M_\circ})$  and a point $\xi_\cT \in \cXtop$.
We say that $\cT$ is a tree if the following properties are satisfied.

(i) $I\subset I_\cT$ for all $P=(I,\om_u)\in T_\cT$.

(ii) If $P=(I,\om_u)$ and $\om_m$ denotes the convex hull
  of $C_2 \omega_u \cup C_2 \omega_l$  then
$$\om_\cT:=
\big[\xi_\cT - \tfrac{C_2 - 1}{4|I_\cT|}, \xi_\cT + \tfrac{C_2 -    1}{4|I_\cT|} \big)$$
is contained in  $\om_m$.

We refer to $I_\cT$ as the top interval of the tree, and to $\xi_\cT$ as the top frequency of the tree.
\end{definition}

In order not to overload the notation we usually  refer to the set $T$
as ``the'' tree (keeping in mind that it carries additional information
of a top frequency and a top interval),
and we shall also use the notation $I_T$, $\xi_T$ and $\om_T$ in place  of
$I_\cT$, $\xi_\cT$ and $\om_\cT$.
With this convention we also define
\begin{definition} ${}$

(i) A
  tree $(T,I_T,\xi_T)$ is
%will be said to be
{\it $l$-overlapping} if $\xi_T \in C_2 \omega_l$ for every $P
  \in T$.

(ii) A tree $(T,I_T,\xi_T)$ is
% will be said to be
{\it $l$-lacunary} if $\xi_T \not\in C_2 \omega_l$ for every $P \in T.$

\end{definition}

Notice that the union of two trees with the  top data $I_T, \xi_T$
is again a tree with the same top data.
Also,
the union of two $l$-overlapping  trees with the same top data
is again an $l$-overlapping  tree with the same top data.

We split our
%arbitrary
finite collection of multitiles into a bounded number of
subcollections
satisfying certain separation conditions (i.e. henceforth all
multitiles will be assumed to belong to a fixed subcollection).
%to obtain the following two separation properties.

\medskip

\noi{\bf Separation assumptions.}
\begin{equation} \label{separationofscales}
\text{If\ } P,P' \text{\ satisfy\ } |\omega_u'| < |\omega_u|, \text{\ then\ }
|\omega_u'| \leq \frac{C_2 - C_3}{2C_1} |\omega_u|.
\end{equation}
\begin{equation} \label{separationprop2}
\text{If\ }P,P' \text{\ satisfy\ } C_1\omega_u \cap C_1 \omega_u' \neq \emptyset \text{\ and\ } |\omega_u| = |\omega_u'| \text{\ then\ } \omega_u = \omega_u'.
\end{equation}

\medskip

As immediate but important  consequence of the  separation assumptions
is
the frequently used
\begin{observation}\label{treeobs} Let $T$ be a tree satisfying the separation
  properties \eqref{separationofscales} and \eqref{separationprop2}.
Then the following properties hold.

(i)
If $T$ is an $l$-overlapping tree, $P,P' \in T$, and $|\omega_u'|
< |\omega_u|$ then $C_3 \omega_u \cap C_2 \omega_u' = \emptyset.$

(ii) %\footnote{Is one $C_3$ a $C_2$ here, too?}
%From (\ref{separationofscales}), it follows that
If $T$ is an $l$-lacunary tree, $P,P' \in T$,
and $|\omega_u'| <
|\omega_u|$ then $C_3 \omega_l \cap C_3 \omega_l' = \emptyset.$

(iii) %From (\ref{separationprop2}), it follows that
If $P,P' \in T$, $P\neq P'$,
%$T$ is a tree,
and $|\omega_u| = |\omega_u'|$, then $I \cap I' = \emptyset.$
\end{observation}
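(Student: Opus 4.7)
The plan is to combine the two separation assumptions with the tree property in a geometric way. The key preliminary observation is that for every $P \in T$, the frequency interval $\omega_m$, being the convex hull of $C_2\omega_u \cup C_2\omega_l$, is contained in $C_1\omega_u$ (since $C_2\omega_l \subset C_1\omega_u$ is part of the setup and $C_2\omega_u \subset C_1\omega_u$ follows from $C_2 \leq C_1$). Consequently $\xi_T \in \omega_T \subset \omega_m \subset C_1\omega_u$.

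Part (iii) is then immediate. Applying the observation to both $P$ and $P'$ gives $\xi_T \in C_1\omega_u \cap C_1\omega_u'$, so the separation assumption \eqref{separationprop2} forces $\omega_u = \omega_u'$ (recall $|\omega_u|=|\omega_u'|$). Since a $\rho$-multitile is determined by the pair $(I,\omega_u)$, having $P \neq P'$ then forces $I \neq I'$, with $|I|=|I'|=1/(2|\omega_u|)$; two distinct dyadic intervals of equal length are necessarily disjoint.

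For (i), the $l$-overlapping hypothesis gives $\xi_T \in C_2\omega_l \cap C_2\omega_l'$. For $1$- and $2$-indices one has $\omega_l = \omega_u - (m+1)|\omega_u|$, so $\omega_l$ sits below $\omega_u$; combining $\xi_T \in C_2\omega_l$ with $C_2\omega_u \cap C_2\omega_l = \emptyset$ yields $c(\omega_u) - \xi_T > C_2|\omega_u|/2$. Hence every $z \in C_3\omega_u$ satisfies $z > \xi_T + \tfrac{C_2 - C_3}{2}|\omega_u|$. On the other hand, $\xi_T \in C_2\omega_l' \subset C_1\omega_u'$ combined with the triangle inequality shows $|y - \xi_T| \leq \tfrac{C_1 + C_2}{2}|\omega_u'|$ for $y \in C_2\omega_u'$, and inserting the scale-separation bound $|\omega_u'| \leq \tfrac{C_2-C_3}{2C_1}|\omega_u|$ from \eqref{separationofscales} together with $C_1 \geq C_2$ gives $|y-\xi_T| \leq \tfrac{C_2-C_3}{2}|\omega_u|$. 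Thus $C_3\omega_u$ and $C_2\omega_u'$ are strictly separated.

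For (ii), the $l$-lacunary hypothesis and $\xi_T \in \omega_m$ force $\xi_T$ to lie strictly above $C_2\omega_l$, so $c(\omega_l) < \xi_T - C_2|\omega_u|/2$ and the top of $C_3\omega_l$ is at most $\xi_T - \tfrac{C_2-C_3}{2}|\omega_u|$. Applying the same reasoning to $P'$, and using $\xi_T \in \omega_m' \subset [c(\omega_l') - C_2|\omega_u'|/2,\; c(\omega_u') + C_2|\omega_u'|/2]$, bounds $c(\omega_l')$ from below by $\xi_T - (m'+1+C_2/2)|\omega_u'|$; hence the bottom of $C_3\omega_l'$ is at least $\xi_T - (m'+1+\tfrac{C_2+C_3}{2})|\omega_u'|$. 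The scale-separation bound then yields $C_3\omega_l \cap C_3\omega_l' = \emptyset$ provided $C_1 \geq m'+1+\tfrac{C_2+C_3}{2}$, which holds with room to spare for $C_1 = 12$, $C_2 = 2$, $C_3 = 11/10$ and every $m' \leq 4$ appearing in the index set $R$. The main technical obstacle is precisely this bookkeeping for (ii): one must verify that $C_1$ has been chosen large enough, relative to the finitely many admissible values of $m$ in $R$, to absorb the displacement of $\omega_l'$ from $\omega_u'$ in the lacunary case, whereas parts (i) and (iii) use only the inequality $C_1 \geq C_2$.
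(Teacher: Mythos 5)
The paper does not write out a proof of this Observation---it is labeled an ``immediate but important consequence of the separation assumptions''---so there is no argument to compare against directly. Your verification is correct, and given the definitions it is essentially the only natural one: start from $\xi_T\in\omega_T\subset\omega_m\subset C_1\omega_u$ for every $P$ in a tree, then feed the $l$-overlapping/$l$-lacunary dichotomy and the two separation hypotheses into the explicit formulas for $\omega_l,\omega_u$. Part (iii) is exactly as you say, a one-line consequence of \eqref{separationprop2} and the fact that a $\rho$-multitile is determined by $(I,\omega_u)$. Your computations for (i) and (ii) are correct, and you are right that the only place where an extra numerical check is hidden is (ii): the condition $C_1\ge m'+1+\tfrac{C_2+C_3}{2}$ you derive evaluates to $6.55\le 12$ for $C_1=12$, $C_2=2$, $C_3=11/10$, $m'\le 4$. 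One small point worth recording: the argument implicitly uses that for $1$- and $2$-indices $\omega_l=\omega_u-(m+1)|\omega_u|$ is a bounded interval sitting below $\omega_u$, so that $\omega_m$ is the interval from $\inf C_2\omega_l$ to $\sup C_2\omega_u$; this is consistent with the paper's reduction to $1$- and $2$-indices (the $3$-index case, where $\omega_l$ is a half-line and dilations of it are not defined, is handled separately by a reflection symmetry).
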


\medskip

As in previous proofs of Carleson's theorem (in particular \cite{lacey00pbc})
we shall split the set of multitiles into subsets with controllable
{\it energy} and {\it density} associated to the function $f$ and the
set $E$, respectively. Here we work with the following definitions.
\begin{definition} Fix $f\in L^2(\bbR)$ and a measurable set $E\subset \bbR$.
Given any collection of multi\-tiles $\P$  we define
%\[
%\energy(\P)
%\equiv {\energy}_f(\P)
%= \sup_{T} \Big(\frac{1}{|I_T|} \sum_{P \in T}
%|\< f , \phi_P\>|^2\Big)^{1/2}
%\]
\[\energy(\P) = \sup_{T}
\sqrt{
%\Big(
\frac{1}{|I_T|} \sum_{P \in T}
|\inn {f}{ \phi_P}|^2
%\Big)^{1/2}
}\]
where the $\sup$ ranges over all $l$-overlapping trees $T \subset \P$.

Given a measurable set $E\subset \bbR$ we set
%\begin{multline*}
\[\density(\P )
%\equiv{\density}_E(\P)
=  \sup_T \Big( \frac{1}{|I_T|}\int_E  \Big(1 + \frac{|x - c(I_T)|}{|I_T|}\Big)^{-4}\sum_{k =1}^{K} |a_k(x)|^{r'} \indic{\omega_T}(\xi_{k-1}(x)) \ dx
 \Big)^{1/r'}
\]
%\end{multline*}
where the $\sup$ is over all non-empty trees $T \subset \P$.
%, and where
%$E \subset \rea$ is  the set fixeda fixed set which will be chosen later.
\end{definition}

\begin{remarknotag}
%\noi{{\it Remark.}}
Concerning the terminology, one can argue
that the squareroot should be omitted in the definition of an energy.
However we work with the above definition to conform to
  \cite{lacey00pbc} and other
  papers in time-frequency analysis.
\end{remarknotag}

\begin{lemma}\label{univ-den-ene-bound}
Let $|f|$ be bounded by $\indic{F}$.
For any family $\P$ of multitiles 
the density of  $\P$ (with respect to
the set $E$) and the
energy (with respect to $f$) are  bounded by a universal constant.
\end{lemma}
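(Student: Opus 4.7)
\medskip

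\noindent\textit{Proof plan.} The density bound is essentially immediate from the normalization of the linearizing coefficients. Since $\sum_{k=1}^{K} |a_k(x)|^{r'} = 1$ pointwise, for any tree $T$ we can drop the indicator $\indic{\omega_T}(\xi_{k-1}(x))$ and the restriction to $E$ to get
\[
\frac{1}{|I_T|}\int_E \left(1+\frac{|x-c(I_T)|}{|I_T|}\right)^{-4}\!\sum_{k=1}^K |a_k(x)|^{r'}\indic{\omega_T}(\xi_{k-1}(x))\,dx \le \frac{1}{|I_T|}\int_\bbR \left(1+\frac{|x-c(I_T)|}{|I_T|}\right)^{-4}\,dx,
\]
and a change of variables shows the right side equals an absolute constant $C$. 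Taking the $r'$-th root and the supremum over trees yields $\density(\P) \le C^{1/r'}$.

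For the energy bound, fix an $l$-overlapping tree $T$ and reduce to showing $\sum_{P\in T}|\inn{f}{\phi_P}|^2 \lesssim |I_T|$. The first ingredient is almost-orthogonality of $\{\phi_P\}_{P\in T}$, which follows from the separation assumptions together with Observation \ref{treeobs}. Indeed, if $|\omega_u|\neq|\omega_u'|$ then part (i) (combined with $C_3 \le C_2$) gives $C_3\omega_u\cap C_3\omega_u' = \emptyset$, so $\phi_P\perp \phi_{P'}$ in $L^2$ by Plancherel since $\supp\widehat{\phi_P}\subset C_3\omega_u$. If $|\omega_u|=|\omega_u'|$ and $P\neq P'$, then either $\omega_u\neq\omega_u'$ and the separation property \eqref{separationprop2} forces disjoint $C_1\omega_u, C_1\omega_u'$ (hence again exact frequency orthogonality), or $\omega_u=\omega_u'$ and part (iii) gives disjoint dyadic spatial intervals $I, I'$, making $|\inn{\phi_P}{\phi_{P'}}|$ rapidly decaying via \eqref{wavepacketadaptedbound}. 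A standard Cotlar--Stein / $TT^*$ argument then yields
\[
\sum_{P\in T}|\inn{g}{\phi_P}|^2 \lesssim \|g\|_2^2 \qquad\text{for all } g\in L^2.
\]

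To replace the crude $\|f\|_2^2 \le |F|$ by $|I_T|$, localize $f$: write $f = f\chi + f(1-\chi)$ where $\chi$ is a smooth cutoff that equals $1$ on $10I_T$ and is supported in $20I_T$. Since $|f|\le 1$, we have $\|f\chi\|_2^2 \le 20|I_T|$, so the previous step gives $\sum_{P\in T}|\inn{f\chi}{\phi_P}|^2\lesssim |I_T|$. For the tail, use \eqref{wavepacketadaptedbound} and $I\subset I_T$ to estimate $|\phi_P(x)|\lesssim |I|^{1/2}(|I|/|I_T|)^{N-1}(1+|x-c(I_T)|/|I_T|)^{-N}|I_T|^{-1}$ whenever $x\notin 10I_T$, yielding $|\inn{f(1-\chi)}{\phi_P}|^2\lesssim |I|(|I|/|I_T|)^{2N-4}$. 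By Observation \ref{treeobs}(iii) the number of tiles at scale $|I|=2^j$ in $T$ is at most $|I_T|/|I|$, so their total contribution is $\lesssim |I_T|(|I|/|I_T|)^{2N-5}$, which sums geometrically to $O(|I_T|)$. Combining the two pieces gives $\energy(\P)\lesssim 1$.

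\medskip

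\noindent\textit{Main obstacle.} The density bound is a one-line calculation; the real content is the energy bound, and within it the technical point is sharpening the naive Bessel consequence $\sum|\inn{f}{\phi_P}|^2\lesssim\|f\|_2^2$ to the geometrically correct $|I_T|$-normalized bound. This requires both the almost-orthogonality (whose verification for same-scale, same-frequency-interval but different-spatial-interval pairs needs the decay of wave packets) and the spatial localization step that exploits the tree structure $I\subset I_T$ to absorb the tails.
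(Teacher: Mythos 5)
Your argument is correct and follows essentially the same route as the paper: density by dropping the coefficient sum and integrating the weight; energy by splitting $f$ into a part supported near $I_T$ (for which one invokes a tree-local Bessel inequality to get $\lesssim \|f_{\rm loc}\|_2^2 \lesssim |I_T|$) and a tail (handled by wave-packet decay and $I\subset I_T$). The only cosmetic differences are that you derive the Bessel inequality by a global Schur/Cotlar--Stein test on the Gram matrix, whereas the paper first separates scales by Plancherel (using that $\supp\widehat{\phi_P}\subset C_3\omega_u$ are pairwise disjoint across scales in an $l$-overlapping tree) and then applies a Schur-type bound within each scale; and you use a smooth cutoff on $20 I_T$ where the paper uses the sharp cutoff $\indic{3 I_T}$. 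Neither difference is substantive.
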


\begin{proof}
Clearly the density is bounded by $\int_{\bbR}(1+|x|)^{-4} \, dx<3$.
Concerning the energy bound
we let  $T$ be any $l$-overlapping tree, and split $f = f' + f''$
where $f' = \indic{3I_T} f$. We estimate
$\sum_{P \in T} |\<f',\phi_P\>|^2
\le \|f'\|_{L^2}(\sum_{P \in T} |\<f',\phi_P\>|^2)^{1/2}$, by the Cauchy-Schwarz inequality. Now use
%and $f'' = f - f'$, it follows from arguments in the proof of (\ref{etreeintervalbound}) that
that the supports of the $\widehat \phi_P$ are disjoint for
different sizes of frequency intervals, and then,  for a fixed size, use
 the bounds
 \eqref{wavepacketadaptedbound} (for $n=0$)  to see that
\[\Big\|\sum_{P \in T} \<f',\phi_P\>\phi_P  \Big\|_{L^2}\le
\Big(\sum_{j\in \bbZ} \Big\|\sum_{\substack{P \in T \\ |\om_u|=2^{-j}}} \<f',\phi_P\>\phi_P  \Big\|_{L^2}^2\Big)^{1/2}
\lc
\Big(\sum_{P \in T} |\<f',\phi_P\>|^2  \Big)^{1/2}.\]
%\[\sum_{P \in T} |\<f',\phi_P\>|^2
%\le \|f'\|_{L^2} \Big\|\sum_{P \in T} \<f',\phi_P\>\phi_P  \Big\|_{L^2}
%\lc \|f'\|_{L^2} \Big(\sum_{P \in T} |\<f',\phi_P\>|^2  \Big)^{1/2}\]
Hence,
\[\sum_{P \in T} |\<f',\phi_P\>|^2   \lc \|f'\|_{L^2}^2
\le |F\cap 3I_T|\lc |I_T|.\]
Furthermore, since $|f''| \leq \indic{\rea \setminus 3I_T}$, we have the estimate
\[
|\<f'',\phi_P\>| \lc |I|^{1/2}(1 + \dis(I,\rea \setminus 3I_T)/|I|)^{-(N-1)} \lc |I|^{1/2} (|I|/|I_T|)^{N-1}
\]
Summing in $P$, we obtain
\[
\sum_{P \in T} |\<f'',\phi_P\>|^2 \lc |I_T|.
\]
Combining the estimates for $f'$ and $f''$ we see that
$\sum_{P \in T} |\<f,\phi_P\>|^2 \lc |I_T|$ for every $l$-overlapping
tree and it follows that
the energy of $\P$ with respect to $f$ is bounded above by a universal 
constant.
\end{proof}

The following proposition allows one to decompose an arbitrary
collection of multitiles into the union of trees, where the trees are
divided into collections $\T_j$ with the energy of trees from $\T_j$
bounded by $2^{-j}.$ The control over energy is balanced by an $L^q$
bound for the functions $N_{j,\ell} := \sum_{T \in \T_j} \indic{2^\ell
  I_T}.$ In contrast to \cite{lacey00pbc} and \cite{grafakos04bmd}, it
is necessary here to consider $q > 1$ and $\ell > 0$ in order to
effectively use the
tree estimate Proposition \ref{treeestimate} with $q > 1$. Note that such
$L^q$ bound for $N_{j,\ell}$ is established by combining (\ref{etreeintervalbound})
and (\ref{etreebmobound}) below.
The bound
(\ref{pisatree}) permits one to make further decompositions to take
advantage of large $|F|$ in the $L^q$ bound for the $N_{j,\ell}$ while
maintaining compatibility with bounds for trees with a fixed density
obtained from Proposition \ref{densityincrementprop}.

\begin{proposition} \label{energyincrementprop}
Let $\ene>0$,
let $|f|$ be bounded above by $\indic{F}$  and let $\P$ be a collection of multitiles with energy bounded above by
$\ene.$
Then, there is a collection
of
trees $\T$ such that
\begin{equation} \label{etreeintervalbound}
\sum_{T \in \T} |I_T| \lc \ene^{-2} |F|
\end{equation}
and
\[
\energy\left( \P \setminus \bigcup_{T \in \T} T\right) \leq \ene/2,
\]
and such that, for every integer $\ell \geq 0$,
\begin{equation} \label{etreebmobound}
\Big\|\sum_{T \in \T} \indic{2^\ell I_T}\Big\|_{BMO} \lc 2^{2\ell} \ene^{-2}.
\end{equation}
Furthermore, if for some collection of trees $\T'$,
\begin{equation} \label{pisauniont}
\P = \bigcup_{T' \in \T'}T'
\end{equation}
then
\begin{equation} \label{pisatree}
\sum_{T\in \T} |I_T|\lc\sum_{T' \in \T'} |I_{T'}|.
\end{equation}
\end{proposition}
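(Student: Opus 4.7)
The plan is to construct $\T$ by an iterative greedy selection. Initialize $\T = \emptyset$ and $\P_0 = \P$. While $\energy(\P_i) > \ene/2$, pick an $l$-overlapping tree $T \subset \P_i$ whose normalized energy exceeds $\ene/2$, with extremal top data (say maximal $|I_T|$, and among these maximal $\xi_T$; one may split into finitely many cases by the sign of $\xi_T$ or by a left/right symmetry to handle both directions). Add this tree to $\T$, remove its multitiles from $\P_i$, and iterate. Since $\P$ is finite the procedure terminates, yielding a collection $\T$ with $\energy(\P \setminus \bigcup_{T \in \T} T) \leq \ene/2$ by construction.

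For \eqref{etreeintervalbound}, the key ingredient is a Bessel-type (TT$^*$) inequality
\[
\sum_{T \in \T} \sum_{P \in T} |\langle f, \phi_P \rangle|^2 \lc \|f\|_2^2 \lc |F|.
\]
This uses the $l$-overlapping property combined with the extremal choice: for two selected trees $T \neq T'$, either their top intervals $I_T, I_{T'}$ are geometrically separated, or one's top frequency $\xi_T$ lies outside $C_2 \omega_l'$ for all $P' \in T'$ (otherwise $T$ could have been absorbed into $T'$ violating maximality), giving enough disjointness of the $\omega_u$ supports of $\widehat{\phi}_P$ across trees to conclude almost-orthogonality via the standard pairing estimate using the decay \eqref{wavepacketadaptedbound}. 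Combining with $\sum_{P \in T} |\langle f, \phi_P\rangle|^2 \geq (\ene/2)^2 |I_T|$ for each selected tree gives \eqref{etreeintervalbound}.

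For the BMO bound \eqref{etreebmobound}, fix a dyadic interval $J$; the strategy is to show $\sum_{T \in \T} |2^\ell I_T \cap J| \lc 2^{2\ell}\ene^{-2}|J|$ with similar control on $|(2^\ell I_T \cap J) \setminus \text{(avg)}|$. Split $\T$ into those $T$ with $I_T \subset 2^{\ell+10}J$ and those with $I_T \supsetneq$ (some dilate of) $J$. For the first class, repeat the Bessel-style argument above but with $f$ replaced by $f \indic{C \cdot 2^{\ell+10} J}$; this gives a local Bessel bound $\sum_{P \in T} |\langle f, \phi_P\rangle|^2 \lc 2^\ell|J|$ summed over $T$ in this class, hence $\sum_T |I_T| \lc \ene^{-2} 2^\ell |J|$, and multiplying by $2^\ell$ (the maximum dilation effect on contribution to $J$) yields the claimed $2^{2\ell}\ene^{-2}|J|$. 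For the second class, the extremality and observation \ref{treeobs}(iii) force that these tops stack with bounded multiplicity per scale and each contributes $|J|$ to the intersection count, easily bounded. Taking the sup over $J$ and pairing with the mean gives the BMO estimate.

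For \eqref{pisatree}, suppose $\P = \bigcup_{T' \in \T'} T'$. Each selected tree $T \in \T$ has at least one multitile $P \in T$ with $\xi_T \in \omega_m$; since $P \in T'$ for some $T'$, the tree constraints force $\omega_{T'}$ and $\omega_T$ to be comparable, and by the scale-separation axioms \eqref{separationofscales}--\eqref{separationprop2} we can arrange $I_T \subset I_{T'}$. Assigning each $T \in \T$ to such a $T'$ and using a Vitali-type disjointification on the resulting map (leveraging that the $I_T$'s of large energy cannot stack more than $O(\ene^{-2})$ times over a given $I_{T'}$ by the first-class analysis above, with the $\ene^{-2}$ already absorbed into the constant) yields $\sum_{T \in \T} |I_T| \lc \sum_{T' \in \T'} |I_{T'}|$.

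The main obstacle is the BMO bound \eqref{etreebmobound}: the usual Bessel/energy inequality yields $L^1$ control on $\sum_T \indic{I_T}$ (corresponding to $\ell=0$ without BMO), and extending this to an interval-by-interval local bound with the correct $2^{2\ell}$ dependence requires carefully localizing the TT$^*$ argument to wave packets concentrated in a neighborhood of $J$ while controlling the tail contributions via \eqref{wavepacketadaptedbound}. This is where the present proposition improves over \cite{lacey00pbc, grafakos04bmd}.
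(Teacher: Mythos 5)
Your proposal captures the correct skeleton (greedy tree selection followed by a $TT^*$ / almost-orthogonality bound), but there are three concrete gaps relative to what the paper actually does, and the first two are not cosmetic.

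\textbf{Selection criterion.} You propose picking the tree with maximal $|I_T|$ first, and maximal $\xi_T$ as a tiebreaker. The paper selects by \emph{maximal top frequency $\xi_S$ alone}. This is not an arbitrary choice: the key disjointness claim in the $TT^*$ estimate (the paper's inequality \eqref{claimofdisjointness}) requires that whenever two tiles $P^1\in S_{\kappa_1}$, $P^2\in S_{\kappa_2}$ satisfy $C_3\omega_u^1\cap C_3\omega_u^2\neq\emptyset$ with $|I^2|<|I^1|$, the tree of higher top frequency was selected \emph{first}, forcing $\kappa_1<\kappa_2$ and hence $I^2$ to lie outside $I_{T_{\kappa_1}}$. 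This comes from monotone decrease of $\xi_{S_j}$ in $j$, which a maximal-$|I_T|$-first rule does not provide. Your hedge ``(say maximal $|I_T|$\dots)'' signals you were unsure, and indeed the intended criterion is different and load-bearing.

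\textbf{The BMO bound \eqref{etreebmobound}.} Your plan is to ``repeat the Bessel-style argument with $f$ replaced by $f\indic{C\cdot 2^{\ell+10}J}$.'' This would give $\sum_T\sum_P|\langle f',\phi_P\rangle|^2\lc\|f'\|_2^2\lc 2^\ell|J|$, but to extract $\sum_T|I_T|\lc\ene^{-2}2^\ell|J|$ you also need the energy \emph{lower} bound $\sum_{P\in S(T)}|\langle f',\phi_P\rangle|^2\gtrsim\ene^2|I_T|$ for the localized function $f'$. The selection procedure only guarantees this lower bound for the global $f$, and part of that energy may come from $f''=f-f'$. The paper handles this by splitting $\widetilde\T$ into $\T^{\text{main}}$ (trees where $f''$ contributes little, so the lower bound passes to $f'$) and the error classes $\T^0,\T^1,\dots$ (trees where $f''$ dominates at some scale), with the latter estimated by a separate orthogonality/decay argument. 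This split is not an optional refinement; without it the argument you sketch does not close. Since you correctly identify the BMO bound as ``the main obstacle,'' this is the key missing idea.

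\textbf{The bound \eqref{pisatree}.} Your argument tries to assign each $T\in\T$ to some $T'\in\T'$ with $I_T\subset I_{T'}$ and then use a Vitali-type counting. But a shared tile $P\in T\cap T'$ only gives $I_P\subset I_T\cap I_{T'}$; it does not yield $I_T\subset I_{T'}$ or make $\omega_T,\omega_{T'}$ comparable, and in general one can have $|I_T|\gg|I_{T'}|$. The paper's proof is structurally different: it lower-bounds $(\ene/2)^2\sum_T|I_T|$ by $\sum_{P\in\bigcup S}|\langle f,\phi_P\rangle|^2$, then splits the tiles of $\bigcup S$ according to the position of $\xi_{T'}$ relative to $C_2\omega_l$ and $C_3\omega_u$ for whichever $T'\ni P$, applying the energy bound to each of the three resulting sub-collections (each of which forms an $l$-overlapping tree, or a collection of disjoint rectangles, inside $T'$). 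Your assignment argument does not go through.
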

Above, and subsequently, $\|\cdot\|_{BMO}$ denotes the dyadic $BMO$ norm.

\begin{proof}
%Without loss of generality, assume that $\ene > 0.$
We select trees through an iterative procedure.
First, if $\energy (\P)\le \ene/2$
then no tree is chosen and $\T=\emptyset$.

%$|I_S|^{-1} \sum_{P \in S} |\inn{f}{\phi_P}|^2 < \ene^2/4$ for
%all $l$-overlapping trees $S$ then no tree is chosen,
%and we have

If $\energy (\P)\ge \ene/2$ then we
observe that
there is an $l$-overlapping  tree $S\subset \P$  for which
\begin{equation} \label{shaslargeenergy}
\frac{1}{|I_S|} \sum_{P \in S} |\< f , \phi_P\>|^2 \geq \ene^2/4.
\end{equation}
There is  only a finite number of such  trees and as $S_1$ we choose
one for
which the top datum $\xi_S$ is maximal (in $\bbR$).
Note that the maximality can be achieved as we restrict all top frequency  data to the finite set $\cXtop$.
Let $T_{1}$ be the  tree in $\P$ which has
top data $(\xi_{S_{1}}, I_{S_{1}})$  and which is maximal with
respect to inclusion.

Suppose that trees $S_k, T_k$ have been chosen for $k=1, \ldots, j$.
%, so that the $S_k$ are $l$-overlapping, $S_k\subset T_k$,
%and $T_k\cap T_{k'}=\emptyset for $k\neq k'$.
Set
\[
\P_j = \P \setminus \bigcup_{k=1}^{j}  T_k
\]
If $\energy(\P_j) \leq \ene/2$ then we terminate the procedure, set $\T = \{T_k\}_{1 \leq k \leq j}$ and $n=j.$ Otherwise, we may find an $l$-overlapping tree $S \subset \P_j$  such that \eqref{shaslargeenergy} holds.
%\begin{equation} \label{shaslargeenergy}
%\frac{1}{|I_S|} \sum_{P \in S} |\< f , \phi_P\>|^2 \geq \ene^2/4
%\end{equation}
Among $l$-overlapping in $\P_j$  satisfying
\eqref{shaslargeenergy} choose one with maximal top-frequency (in $\cXtop$)
and
label this tree
$S_{j+1}$.
%\footnote{\tt Before:\rm
%Choose such a tree $S_{j+1}$ with $\xi_{S_{j+1}}$ maximal in the sense
%that for any $l$-overlapping tree $S$ satisfying
%(\ref{shaslargeenergy}) with $\xi_S > \xi_{S_{j+1}}$ we have that
%$(S_{j+1}, \xi_S, I_{S_{j+1}})$ is an $l$-overlapping tree. }
Let $T_{j+1}$ be the maximal
tree in $\P_j$ which has
 top data $(\xi_{S_{j+1}}, I_{S_{j+1}})$  and which is maximal with
respect to inclusion.
 This process will eventually stop since each $T_j$ is nonempty and $\P$ is finite.

\medskip

\noi{\it Proof of (\ref{etreeintervalbound})}. It suffices to show
\Be{squarebyfct}
\Big(\frac{\ene^2}{|F|} \sum_{j=1}^{n} |I_{S_j}|\Big)^2 \lc \frac{\ene^2}{|F|} \sum_{j=1}^{n} |I_{S_j}|.
\Ee
Since the $S_j$ satisfy  (\ref{shaslargeenergy}), we have
\[
\Big(\frac{\ene^2}{|F|} \sum_{j=1}^{n} |I_{S_j}|\Big)^2 \leq 16 |F|^{-2}
\Big(\sum_{j=1}^{n} \sum_{P \in S_j}
|\langle f, \phi_P\rangle|^2 \Big)^{2}.
\]
Now
\begin{multline*}
\Big( \sum_{j=1}^n \sum_{P \in S_j}  |\inn{f}{\phi_P}|^2\Big)^2=
\Big(\biginn{\sum_{j=1}^n \sum_{P\in S_j}
\inn{f}{\phi_P}\phi_P} {f}\Big)^2
\\
\leq \|f\|_2^2\,\Big\|\sum_{j=1}^n\sum_{P \in S_j}
\inn{f}{\phi_P}\phi_P\Big\|_2^2\leq |F|
\sum_{j=1}^n \sum_{P\in S_j}
\sum_{k=1}^n \sum_{P'\in S_j}
|\inn{f}{\phi_P}|
\,|\inn{f}{\phi_{P'}}|\, |\inn{\phi_P}{\phi_{P'}}|
\end{multline*}
where in the last inequality we used $|f|\le \indic{F}$.
By symmetry, it remains, for
\eqref{squarebyfct}
%(\ref{etreeintervalbound}),
to show that
\begin{equation} \label{etreeorth1}
\sum_{j=1}^{n} \sum_{k=1}^{n} \sum_{P \in S_j} \sum_{P' \in S_k : |I'| = |I|}  |\langle f, \phi_P\rangle| \,|\langle f, \phi_{P'} \rangle|\, |\<\phi_P,\phi_{P'}\>| \lc \ene^2 \sum_{j=1}^{n} |I_{S_j}|
\end{equation}
and
\begin{equation} \label{etreeorth2}
\sum_{j=1}^{n} \sum_{k=1}^{n} \sum_{P \in S_j} \sum_{P' \in S_k : |I'| < |I|} |\langle f, \phi_P\rangle| \,|\langle f, \phi_{P'} \rangle| \,|\<\phi_P,\phi_{P'}\>| \lc \ene^2 \sum_{j=1}^{n} |I_{S_j}|.
\end{equation}
In both cases, we will use the estimate
\begin{equation} \label{schwartzestimate}
|\<\phi_P,\phi_{P'}\>| \lc \Big( \frac{|I|}{|I'|} \Big)^{1/2} \< w_{I}
,
\indic{I'} \>.
\end{equation}
which holds whenever $|I'| \leq |I|.$

Estimating the product of two terms by the square of their maximum, we see that the left side of (\ref{etreeorth1}) is
\[
\leq 2 \sum_{j=1}^{n} \sum_{k=1}^{n} \sum_{P \in S_j} \sum_{P' \in S_k : |I'| = |I|}  |\langle f, \phi_P\rangle|^2 |\<\phi_P,\phi_{P'}\>|.
\]
Recall that $\<\phi_P,\phi_P'\> = 0$ unless $C_3 \omega_u \cap C_3 \omega_u' \neq \emptyset.$ Thus, by (\ref{separationprop2}), (\ref{schwartzestimate}) and the fact that the $S_k$ are pairwise disjoint, we can estimate
the last display  by
\[
2 \sum_{j=1}^{n} \sum_{P \in S_j} |\langle f, \phi_P\rangle|^2 \sum_{I' : |I'| = |I|} \< w_{I} , \indic{I'} \> \lc \sum_{j=1}^{n} \sum_{P \in S_j} |\langle f, \phi_P\rangle|^2\lc \sum_{j=1}^{n}  \ene^2 |I_{S_j}|.
\]
%Since the energy of $\P$ is bounded above by $e$, the right side above is
%\[\]
This finishes the proof of (\ref{etreeorth1}).

Applying the Cauchy-Schwarz inequality, we see that the left side of
(\ref{etreeorth2}) is bounded by
\[
\sum_{j=1}^{n}  \Big(\sum_{P \in S_j} |\langle f, \phi_P\rangle|^2
\Big)^{1/2} \Big(\sum_{P \in S_j} \Big( \sum_{k=1}^{n}\sum_{P' \in S_k : |I'| < |I|}  |\langle f, \phi_{P'} \rangle| |\<\phi_P,\phi_{P'}\>| \Big)^2 \Big)^{1/2}.
\]
Twice using the fact that the energy of $\P$ is bounded by $\ene$, we see that the last display is
\[
\leq \ene^2 \sum_{j=1}^{n} |I_{S_j}|^{1/2} \biggl(\sum_{P \in S_j} \Big( \sum_{k=1}^{n}\sum_{P' \in S_k : |I'| < |I|}   \big|\biginn{\phi_P}{ |I_{P'}|^{1/2}\phi_{P'}}\big|
\Big)^2 \biggr)^{1/2}.
\]
Thus, to prove (\ref{etreeorth2}) it remains to show that, for each $j$,
\begin{equation}\label{etreeorth22}
\sum_{P \in S_j} \Big( \sum_{k=1}^{n}\sum_{P' \in S_k : |I'| < |I|}
\big|\biginn{\phi_P}{ |I_{P'}|^{1/2}\phi_{P'}}\big| \Big)^2  \lc |I_{S_j}|.
\end{equation}
Again, we only have $|\inn{\phi_P}{ |I_{P'}|^{1/2}\phi_{P'}}|$ nonzero
when $C_{3}\omega_u \cap C_3 \omega_u' \neq \emptyset$ which can only happen if $\sup C_{3} \omega_u \in C_3 \omega_u'$ or $\inf C_{3} \omega_u \in C_3 \omega_u'.$ Applying (\ref{schwartzestimate}), we thus see that the left side of
\eqref{etreeorth22}
 is dominated by a constant times  the expression
%\begin{multline*}
\[\sum_{P \in S_j} |I_P|\Big( \sum_{k=1}^{n}
\sum_{\substack{P' \in S_k : |I'| < |I| \\ \sup C_3 \omega_u \in C_{3} \omega_u'}}   \<w_I, \indic{I'}\> \Big)^2 \\ +  \sum_{P \in S_j} |I_P|\Big( \sum_{k=1}^{n}\sum_{\substack{P' \in S_k : |I'| < |I| \\ \inf C_3 \omega_u \in C_{3} \omega_u'}}  \<w_I, \indic{I'}\> \Big)^2.
\]
%\end{multline*}
We now claim that, for each $P\in S_j$ (with time interval $I$),
\Be{claimofdisjointness}
\Big( \sum_{k=1}^{n}
\sum_{\substack{P' \in S_k : |I'| < |I| \\ \sup C_3 \omega_u \in C_{3} \omega_u'}}   \<w_I, \indic{I'}\> \big)^2
\le \inn{w_I}{\indic {\bbR \setminus I_{S_j}}}
\Ee
and that  the same inequality with $\sup$ replaced by $\inf$ in the $P'$
 summation holds as well.
To see this consider two multitiles $P^1=(I^1,\om_u^1) \in S_{\ka_1}$
and $P^2=(I^2,\om_u^2)\in S_{\ka_2} $, $P^{1}\neq P^{2}$.
 so that
$|I^2|\le |I^1|$ and $C_3\om_u^1\cap C_3\om_u^2\neq \emptyset$.
The last condition implies $\ka_1\neq \ka_2$
(since $S_{\ka_1}$ and $S_{\ka_2}$ are $l$-overlapping).
The inequality \eqref{claimofdisjointness} is immediate if we can show
that $I^1$ and $I^2$ are disjoint and if in addition
$|I^2| < |I^1|,$ then $I_2$ does not belong to the top interval
of the tree $S_{\ka_1}$.
Now, if $|I^1| = |I^2|$, then
from (\ref{separationprop2}) it follows that $\omega_u^1 = \omega_u^2$
and hence, since $P^1 \neq P^2,$ we have $I^1 \cap I^2 = \emptyset.$
If $|I^2| < |I^1|,$
then by
%the separation condition
(\ref{separationofscales})
$|\om_u^1|\le \frac{C_2-C_3}{2C_1}|\omega_u^2|$;
since
$C_3\om_u^1\cap C_3\om_u^2\neq \emptyset$ this implies that
$\inf C_2\omega_l^1> \sup C_2\omega_l^2$. As both trees are $l$-overlapping
the top  frequency of $S_{\ka_1}$  belongs  to $C_2\om_l^1$ and  is above
the  top frequency of $S_{\ka_2}$  which  belongs  to $C_2\om_l^2$.
Thus by the maximality condition on the top frequency in the selection process of the trees we see that the tree $S_{\ka_1}$ was selected before
the tree $S_{\ka_2}$, i.e. $\ka_1<\ka_2$. This implies that
$P_2$ does not belong to the tree $T_{\ka_1}$, and since the interval
$\omega_{S_{\ka_1}}$ is contained in  the convex hull of $\omega_l^2$ and
$\omega_u^2$ we see that the time intervals $I^2$ and
$I_{T_{\ka_1}}=I_{S_{\ka_1}}$
cannot intersect.
Thus $I^1\cap I^2=\emptyset$ as  $I^1\subset I_{T_{\ka_1}}$.
This concludes the argument for \eqref{claimofdisjointness}.

Now by the disjointness condition we see that indeed the left hand side of
\eqref{etreeorth22} is bounded by a constant times
\begin{align*}
%\label{etreeorth2part2}
\sum_{P \in S_j} |I_P| \biginn{w_I}{\indic{\rea \setminus I_{S_j}}}^2
&\lc \sum_{\ell : 2^\ell \leq |I_{S_j}|} 2^\ell \sum_{P \in S_j : |I| = 2^\ell}
\biginn{w_I}{\indic{\rea \setminus I_{S_j}}}
\\&
\lc |I_{S_j}| \sup_{\ell : 2^\ell \leq |I_{S_j}|}  \sum_{P \in S_j : |I| = 2^\ell}
\biginn{w_I}{\indic{\rea \setminus I_{S_j}}}
\end{align*}
One may check that, for each $\ell$,
\[
\sum_{P \in S_j : |I| = 2^\ell}
\biginn{w_I}{\indic{\rea \setminus I_{S_j}}} \lc 1
%\leq C
\]
and (\ref{etreeorth22}) follows. We have already seen that  (\ref{etreeorth22})
implies (\ref{etreeorth2}); this completes the proof of
(\ref{etreeintervalbound}).

\medskip

\noi{\it Proof of (\ref{etreebmobound})}.  We need to show that for each dyadic interval
$J$, we have
\[
\frac{1}{|J|} \int_{J} \Big|\sum_{T \in \T}\indic{2^\ell I_T}(x) -
\frac{1}{|J|}\int_{J} \sum_{T \in \T} \indic{2^\ell I_T}(y)\ dy\Big|\
dx \lc 2^{2\ell} \ene^{-2}.
\]
This is an immediate consequence of
%To this end, it  suffice to show that
\begin{equation} \label{bmosummeasures}
\sum_{\substack{T \in \T \\ J \cap 2^\ell I_T \neq \emptyset,J}} |I_T|
\lc
\ene^{-2} 2^\ell |J|.
\end{equation}
Let \[\widetilde{\T} = \{T \in \T : I_T \subset 2^{\ell+1}J, |I_T| \leq |J|\}\] and note that
if $T \in \T$ with $2^\ell I_T \cap J \neq \emptyset,J$ then
$T \in \widetilde{\T}.$
Write $f = f' + f''$ where $|f'| \leq \indic{F\cap 2^{\ell+5}J}$ and
$|f''| \leq \indic{F\cap\rea \setminus 2^{\ell+5}J}.$

We will write $\widetilde{\T}$ as the union of collections of trees
$\T^{\text{main}} \cup \T^0 \cup \T^1 \cup \ldots$
each of which will have certain properties related to the energy.
For each tree $T \in \widetilde{\T}$ there is an l-overlapping tree
$S=S(T)$
chosen in the algorithm above with $I_S = I_T$ and
\begin{equation} \label{shaslargeenergy2}
\frac{1}{|I_S|}\sum_{P \in S} |\langle f, \phi_{P} \rangle|^2 \geq  \ene^2/4.
\end{equation}
Let \[\T^0 = \{T \in \widetilde{\T} : \frac{1}{|I_S|}\sum_{P \in S} |\langle f'', \phi_{P} \rangle|^2 \geq  \ene^2/16\}.\]
For $j \geq 1$, define
\[
\T^{j} = \{T \in \widetilde{\T}  : \sup_{\substack{S' \subset S(T) \\ |I_{S'}| = 2^{-j}|I_T|}} \frac{1}{|I_{S'}|}\sum_{P \in S'} |\langle f'', \phi_{P} \rangle|^2 \geq \ene^2/16  \}
\]
where, for each $T$, the $\sup$ above is taken over all
$l$-overlapping trees $S'$ with $S' \subset S(T)$. Finally, let
\[\T^{\text{main}} = \{T \in \tilde{\T} \setminus (\T^0 \cup \T^1 \cup \ldots)\}.\]
We split the sum \eqref{bmosummeasures} into the ``main'' term involving trees in
$\T^{\text{main}}$ and an error term involving $\cup_{j\ge 0}\T_j$.

We first consider the main term. It is our objective to prove
\Be{sumITinTprime}
\sum_{T \in \T^{\text{main}}} |I_T| \lc \ene^{-2} 2^\ell |J|.
\Ee
Let $T \in \T^{\text{main}}$ and let $S'$ be any $l$-overlapping tree
contained in $S$ satisfying $|I_{S'}| \leq |I_S|$. Since the energy of $\P$ is bounded by $\ene$ and since $T$ is not in any $\T^j$, we have
\[
\frac{1}{|I_{S'}|}\sum_{P \in S'} |\langle f', \phi_P \rangle|^2 \leq 2 \frac{1}{|I_{S'}|}\sum_{P \in S'} |\langle f, \phi_P \rangle|^2 + 2 \frac{1}{|I_{S'}|}\sum_{P \in S'} |\langle f'', \phi_P \rangle|^2 \lc \ene^2.
\]
From (\ref{shaslargeenergy2}) and the fact that $T \notin \T^0$, we have
\[
\frac{1}{|S(T)|}\sum_{P \in S(T)} |\langle f', \phi_P \rangle|^2 \geq \ene^2/8 - \ene^2/16 = \ene^2/16.
\]
This inequality allows us to essentially repeat the above proof of
(\ref{etreeintervalbound}). The $l$-overlapping trees $S(T)$ form a
(finite or infinite)
subsequence
of the sequence $S_j$ which we denote by $S_{j(\nu)}$ so that we have
$j(\nu)>j(\nu')$ for $\nu>\nu'$.
We need to prove the analogue of \eqref{squarebyfct}
which is
\Be{squarebyfctanalog}
%\Big(\frac{\ene^2}{|F\cap 2^{\ell+5}J|}
%\sum_{\nu=1}^{n} |I_{S_{j(\nu)}}|\Big)^2 \lc
\frac{\ene^2}{|F\cap  2^{\ell+5}J|}  \sum_{\nu=1}^{n}
|I_{S_{j(\nu)}}|\lc 1
\Ee
and as before we are aiming to estimate the square of the expression
on the left hand side by the expression itself.
The  $S_{j(\nu)}$ satisfy
\[
\frac{1}{|S_{j(\nu)}|}\sum_{P \in S_{j(\nu)}} |\langle f', \phi_P \rangle|^2 \geq  \frac{\ene^2}{16}
\]
and therefore
\[
\Big(\frac{\ene^2}{|F\cap 2^{\ell+5}J|} \sum_{\nu=1}^{n}
|I_{S_{j(\nu)}}|\Big)^2 \leq 256 |F\cap 2^{\ell+5}J|^{-2}
\Big(\sum_{\nu=1}^{n} \sum_{P \in S_{j(\nu)}}
|\langle f, \phi_P\rangle|^2 \Big)^{2}.
\]
We continue to argue
with exactly the same reasoning as in the proof of
(\ref{squarebyfct}),
replacing $f$ with $f'$ and $F$ with
$F\cap 2^{\ell+5}J$. This leads to the proof of
 \eqref{squarebyfctanalog} and thus to
\[
\sum_{T \in \T^{\text{main}}} |I_T| \lc \ene^{-2} |F\cap 2^{\ell+5} J|
\]
which is clearly  $\lc \ene^{-2} 2^\ell |J|$. Thus
\eqref{sumITinTprime} is established.

For the complimentary terms we prove better estimates, namely, for $j=0,1,2,\dots$,
\Be{sumTj}
\sum_{T \in \T_j} |I_T| \lc 2^{-\ell-j} \ene^{-2} |J|.
\Ee

For each
%$j$,
$T\in \T^j$ and $P \in S(T)$ we have $|I|\le 2^{-j} |I_T|\le 2^{-j}|J|.$ Thus 
\[
\sum_{T \in \T^j} |I_T| \lc
 \sum_{T \in \T^j} 2^j \ene^{-2} \sum_{P \in S (T): |I| \leq 2^{-j}|J|} |\langle f'', \phi_P\rangle|^{2}.
\]
Since the $S$  are pairwise disjoint, the right hand side  is
%\begin{equation} \label{sumoveralldyadicsJ}
\[\lc 2^j \ene^{-2} \sum_{k \geq j} \sum_{\substack{P : |I| = 2^{-k}|J| \\ I \subset
2^{\ell+1}J}} |\langle f'', \phi_P\rangle|^{2}.
\]
%\end{equation}
Fixing $k\ge j$, we apply Minkowski's inequality to obtain
\[
\sum_{\substack{P : |I| = 2^{-k}|J| \\ I \subset 2^{\ell+1}J}} |\langle f'', \phi_P\rangle|^{2} \leq \biggl(\sum_{\substack{K : |K| = 2^{1-k}|J|\\K \cap 2^{\ell+2}J = \emptyset}} \Big( \sum_{\substack{P : |I| = 2^{-k}|J| \\ I \subset 2^{\ell+1}J}}
|\langle \indic{K} f'', \phi_P\rangle|^{2}\Big)^{1/2}\biggr)^2
\]
where above, we sum over dyadic intervals $K$ and use the fact that
$f''$ is supported on $\rea \setminus 2^{\ell+5}J.$

Now note that
$\phi_{P'} = c \exp(2\pi i (c(\omega_u') - c(\omega_u) )\cdot)
\,\phi_{P}$ when $I = I'$. Thus  if $\cP(I_\circ)$ is a collection of
disjoint
multitiles with common
time interval $I_\circ$, if $g$ is supported on $CI_\circ$  and if $P_{I_\circ}$ is any fixed multitile in
$\cP(I_\circ)$ then Bessel's inequality gives
\[
\sum_{P\in \cP(I_\circ)} |\inn{g}{\phi_P}|^2 \lc |I_\circ| \int
|g\phi_{P_{I_\circ}}|^2 dx.
\]
We apply this observation to the inner sums in the previous display
and obtain the inequality
\Be{sumafterorth}
\sum_{\substack{P : |I| = 2^{-k}|J| \\ I \subset 2^{\ell+1}J}}
 |\langle f'', \phi_P\rangle|^{2}
\lc 2^{-k}|J| \biggl(\sum_{\substack{K : |K| = 2^{1-k}|J|\\K \cap 2^{\ell+2}J
= \emptyset}} \Big( \sum_{\substack{I : |I| = 2^{-k}|J| \\ I \subset 2^{\ell+1}J}}
\big\| \indic{K} f'' \phi_{P_I}\big\|_{L^2}^{2}\Big)^{1/2}\biggr)^2
\Ee
where for each $I$, $P_I$ is any multitile with time interval
$I$. Since $|f''|\le 1$ the bound  (\ref{wavepacketadaptedbound}) yields
\[\big\|  \indic{K} f'' \phi_{P_I}\big\|_{L^2}^{2}\lc\big(1 +
\tfrac{\dis(K,I)}{|I|}\big)^{-N}.\] Applying it with large $N$  we
see that
the right hand side of \eqref{sumafterorth}  is
$\lc 2^{-(k+\ell)(N-4)} 2^{-k}|J|$. Summing over $k\ge j$
%and over $j\ge0$
 we obtain inequality  \eqref{sumTj}. This concludes the proof of
(\ref{etreebmobound}).

\medskip

\noi{\it Proof of $(\ref{pisatree})$}.
%Moving on to $(\ref{pisatree})$,
For each $T \in \T$, let $S\equiv S(T)$ be the corresponding $l$-overlapping tree from the selection algorithm above and recall
\[
\sum_{T \in \T} |I_{T}| (\ene/2)^2 \leq  \sum_{P \in \bigcup_{T \in \T} S}|\<f, \phi_P\>|^2.
\]
Since $\P = \bigcup_{T' \in \T'} T'$, the right side above is
dominated by
\begin{multline}\label{threeterms}
 \sum_{T' \in \T'} \sum_{\substack{P \in T' \cap \bigcup_{T \in \T} S \\ \xi_{T'} \in C_2 \omega_l}} |\<f, \phi_P\>|^2 + \sum_{T' \in \T'} \sum_{\substack{P \in T' \cap \bigcup_{T \in \T} S \\ \xi_{T'} \geq \inf C_3 \omega_u}} |\<f, \phi_P\>|^2 \\ + \sum_{T' \in \T'} \sum_{\substack{P \in T' \cap \bigcup_{T \in \T} S \\ \sup C_2 \omega_l \leq \xi_{T'} < \inf C_3 \omega_u}} |\<f, \phi_P\>|^2.
\end{multline}
For each $T'\in \T'$ the set of tiles $P\in T'\cap \bigcup_{T\in\T}
S(T)$ with the property that $\xi_{T'}\in C_2\omega_l$ is by
definition an $l$-overlapping tree. Thus, since  $\P$ has energy
bounded by
$\ene$, we estimate the first term in \eqref{threeterms}
\[
\sum_{T' \in \T'} \sum_{\substack{P \in T' \cap \bigcup_{T \in \T} S \\ \xi_{T'} \in C_2 \omega_l}} |\<f, w_u\>|^2 \leq \sum_{T' \in \T'} \ene^2 |I_{T'}|.
\]

For the estimation of the second   term in \eqref{threeterms} we observe
that any fixed multitile forms an $l$-overlapping tree (with respect
to some top data), and we use the energy bound for fixed tiles.
We observe that the rectangles
$\{I \times [\inf C_3 \omega_u,\sup C_2 \omega_u)
 : P \in \bigcup_{T \in \T} S\}$ are pairwise disjoint.
Indeed if $P\in S $, $P'\in S$ this follows since $S$ is $l$-overlapping
(\cf. Observation \ref{treeobs}). If $P\in S_{\ka_1}$ and $P'\in S_{\ka_2}$
$\ka_1<\ka_2$ then $\xi_{S_{\ka_1}}\ge \xi_{S_{\ka_2}}$ and  an overlap of $[\inf C_3 \omega_u,\sup C_2 \omega_u)$
and $[\inf C_3 \omega_u',\sup C_2 \omega_u')$ would imply that $P'$
belongs  to the maximal tree with the same top data as $S_{\ka_1}$, i.e. this would imply that  $P'\in T_{\ka_1}$ which is disjoint from $S_{\ka_2}$.

This allows us to estimate for any fixed $T'\in \T'$
%Since the rectangles $\{I \times [\inf C_3 \omega_u,\sup C_2 \omega_u)
% : P \in \bigcup_{T \in \T} S\}$ are pairwise disjoint,
%we apply the energy bound again to see that
%This allows us to estimate
\[
\sum_{\substack{P \in T' \cap \bigcup_{T \in \T} S
    \\ \xi_{T'} \geq \inf C_3 \omega_u}} |\<f, \phi_P\>|^2 \leq  \sum_{\substack{P \in T' \cap \bigcup_{T \in \T} S \\
    \xi_{T'} \geq \inf C_3 \omega_u}}
\ene^2 |I| \leq  \ene^2 |I_{T'}|.
\]
Now sum over $T'\in \T'$ and it follows that the middle term in
\eqref{threeterms} is $\lc \sum_{T'\in\T'}\ene^2 |I_{T'}|.$
     and summing

%\[
%\sum_{T' \in \T'} \sum_{\substack{P \in T' \cap \bigcup_{T \in \T} S
%    \\ \xi_{T'} \geq \inf C_3 \omega_u}} |\<f, \phi_P\>|^2 \leq \sum_{T'
%  \in \T'} \sum_{\substack{P \in T' \cap \bigcup_{T \in \T} S \\
%    \xi_{T'} \geq \inf C_3 \omega_u}}
%\ene^2 |I| \leq \sum_{T' \in \T'} \ene^2 |I_{T'}|.
%\]

For the estimation of the third term in \eqref{threeterms} we begin with a preliminary
observation, also related to the selection of the $S$.
Suppose $P \in T' \cap S$, $\tilde{P} \in T' \cap \tilde{S}$ where $T,\tilde{T} \in \T$ and
\[\xi_{T'} \in [\sup C_2 \omega_l,\inf C_3 \omega_u) \cap [\sup C_2
    \widetilde{\omega_l},\inf C_3 \tilde{\omega}_u),\] and suppose $I
    \subset \widetilde{I}$
and $P \neq \tilde{P}.$ From (\ref{separationprop2}) we have
$I \subsetneqq \widetilde{I}.$
We also have $\inf C_2 \tilde{\omega}_l < \sup C_2 \omega_l$ since
    otherwise it would follow that $\tilde{S}$ was selected prior to
    $S$ and hence $P \in \widetilde{T}$ which is impossible. From
    (\ref{separationofscales}), we have $\inf C_2 \widetilde{\omega}_l
    \geq \sup C_3 \omega_l$ and so $P$ is in the maximal
    $l$-overlapping tree contained in $T'$ with top data
$(\widetilde{I},\inf C_2 \widetilde{\omega}_l)$.
%(note that with the choice $C_2=2$ the number
%$\inf C_2 \widetilde{\omega}_l$ belongs to $\cXtop$).

For each $T' \in \T'$ let $T''$ be the collection of multitiles
$P \in T' \cap \bigcup_{T \in \T} S$ with
$\xi_{T'} \in [\sup C_2 \omega_l,\inf C_3 \omega_u)$ and $I$ maximal
among such multitiles.
Then
\[
\sum_{T' \in \T'} \sum_{\substack{P \in T' \cap \bigcup_{T \in \T} S
    \\ \sup C_2 \omega_l \leq \xi_{T'} < \inf C_3 \omega_u}}
|\<f,\phi_P\>|^2 \leq \sum_{T' \in \T'} \sum_{P'' \in T''}
\sum_{\substack{P \in T' \cap \bigcup_{T \in \T} S \\ \sup C_2
    \omega_l \leq \xi_{T'} < \inf C_3 \omega_u \\ I \subset I''}}
|\<f, \phi_P\>|^2
\]
Considering the discussion in the preceding paragraph, we may apply
the energy bound
to the maximal $l$-overlapping tree contained in $T'$ with top data
$(\widetilde{I},\inf C_2 \widetilde{\omega}_l)$ . Since the $I''$ are
disjoint subintervals of $I_{T'}$ we see
that  the right side in the last display is bounded by
\[
\sum_{T' \in \T'} \sum_{P'' \in T''} 2 \ene^2|I''| \leq \sum_{T' \in \T'} 2 \ene^2 |I_{T'}|.
\]
This completes the proof of  (\ref{pisatree}).
\end{proof}

The proposition below is for use in tandem with Proposition \ref{energyincrementprop}.

\begin{proposition} \label{densityincrementprop}
Let $\P$ be a collection of multitiles and $\den > 0$. Then, there is a collection of trees $\T$ such that
\begin{equation} \label{dtreeintervalbound}
\sum_{T \in \T} |I_T| \lc \den^{-r'} |E|
\end{equation}
and such that
\[
\density\left( \P \setminus \cup_{T \in \T} T\right) \leq \den/2.
\]

\end{proposition}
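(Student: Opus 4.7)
The plan is to run a standard greedy selection algorithm. Initialize $\T=\emptyset$; while $\density(\P\setminus\bigcup_{T\in\T}T)>\den/2$, choose a tree $\cT^*=(T^*,I_{T^*},\xi_{T^*})$ in the residual collection realizing density at least $\den/2$, picking it with $|I_{T^*}|$ maximal (and, as a secondary rule, with $\xi_{T^*}$ maximal in $\cXtop$ among such choices). Then enlarge $T^*$ to the maximal subtree of $\P\setminus\bigcup_{T\in\T}T$ with the same top data $(I_{T^*},\xi_{T^*})$ and add this enlargement to $\T$. Since $\P$ is finite and there are only finitely many admissible top data (dyadic $I\subset[-2^{M_\circ},2^{M_\circ})$ and $\xi\in\cXtop$), the algorithm terminates, and the termination condition yields $\density(\P\setminus\bigcup_{T\in\T}T)\le\den/2$ automatically.

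To prove \eqref{dtreeintervalbound}, rewrite the density bound at each selected $T$ as
\[
(\den/2)^{r'}|I_T|\le\int_E\Bigl(1+\frac{|x-c(I_T)|}{|I_T|}\Bigr)^{-4}\sum_{k=1}^K|a_k(x)|^{r'}\indic{\omega_T}(\xi_{k-1}(x))\,dx,
\]
sum over $T\in\T$, interchange sum and integral, and use the linearizing constraint $\sum_{k=1}^K|a_k(x)|^{r'}\le 1$. The conclusion then reduces to the pointwise Carleson-type packing estimate
\[
\sup_{x\in\bbR,\ 1\le k\le K}\sum_{T\in\T}\Bigl(1+\frac{|x-c(I_T)|}{|I_T|}\Bigr)^{-4}\indic{\omega_T}(\xi_{k-1}(x))\lc 1.
\]

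Proving this pointwise bound is the main obstacle. Fix $x$ and $k$, and set $\eta:=\xi_{k-1}(x)$. Group the contributing trees by the dyadic scale $|I_T|=2^j$. At each scale, the constraint $\eta\in\omega_T$ forces $\xi_T$ to lie in an interval of length $(C_2-1)/(2\cdot2^j)$ centered at $\eta$, while the standard estimate $\sum_{I\text{ dyadic},\,|I|=2^j}(1+|x-c(I)|/2^j)^{-4}\lc 1$ controls the sum over spatial locations. The crucial point, enforced by the selection algorithm, is that for each scale and each $\eta$ at most a bounded number of selected trees can simultaneously satisfy $\eta\in\omega_T$: two such trees $T,T'$ at the same (or comparable) scale with $\eta$ in both $\omega_T$ and $\omega_{T'}$ would have $(I_T,\xi_T)$ and $(I_{T'},\xi_{T'})$ so close that the later-selected tree would already have been swallowed by the maximal enlargement step applied to the earlier tree, a contradiction. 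A geometric summation in $j$, separating the trees whose dilates of $I_T$ contain $x$ from those far from $x$ (where the weight provides decay), then yields the uniform bound.

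With the pointwise estimate in hand one obtains
\[
(\den/2)^{r'}\sum_{T\in\T}|I_T|\lc\int_E\sum_{k=1}^K|a_k(x)|^{r'}\,dx\le|E|,
\]
which is exactly \eqref{dtreeintervalbound}, while the termination of the algorithm gives the residual density bound $\density(\P\setminus\bigcup_{T\in\T}T)\le\den/2$.
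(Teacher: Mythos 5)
Your proposal has two genuine gaps, one in the tree selection and one in the counting.

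\textbf{Gap 1: the selection does not give the disjointness you need.} You remove only one tree at each step: the maximal tree with the chosen top data $(I_{T^*},\xi_{T^*})$. You then assert that two later-and-earlier selected trees at comparable scale with overlapping $\omega_T$'s would already have been absorbed by the earlier enlargement. This is false. A multitile $P$ in the later tree $T'$ satisfies $\omega_{T'}\subset\omega_m(P)$, but $\omega_m(P)$ is merely an interval containing $\omega_{T'}$ and need not contain $\omega_T$ even when $\omega_T$ and $\omega_{T'}$ overlap and $|\omega_T|\le|\omega_{T'}|$; the interval $\omega_m(P)$ can be positioned so that it covers $\omega_{T'}$ but misses one side of $\omega_T$. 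So $P$ may fail to lie in the earlier maximal tree. The paper's fix is to remove \emph{three} trees per step: $T_{j}$ together with $T^{+}_{j}$ and $T^{-}_{j}$, the maximal trees with top frequency shifted by $\pm(C_2-1)/(2|I_{T_j}|)$. Since $\omega_{T^-_j},\omega_{T_j},\omega_{T^+_j}$ tile an interval of length $3|\omega_{T_j}|$, any interval $\omega_m(P)\supset\omega_{T'}$ of length $\ge 2|\omega_{T_j}|$ intersecting $\omega_{T_j}$ must contain one of the three; this is what forces $T'\subset T_j\cup T^+_j\cup T^-_j$ and yields pairwise disjointness of the tiles $I_{T_j}\times\omega_{T_j}$.

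\textbf{Gap 2: the pointwise packing estimate is false.} Even granting tile disjointness, the bound
\[
\sup_{x,\,k}\ \sum_{T\in\T}\Bigl(1+\frac{|x-c(I_T)|}{|I_T|}\Bigr)^{-4}\indic{\omega_T}(\xi_{k-1}(x))\ \lc\ 1
\]
does not hold: fix $\eta=\xi_{k-1}(x)$, and at each of infinitely many scales there can be one disjoint tile whose $\omega_T$ contains $\eta$ and whose $I_T$ sits within $O(|I_T|)$ of $x$, contributing a constant; the sum over scales diverges. The weight is pointwise summable at a single scale (over the disjoint $I_T$'s of that length) but not across scales. This is why the paper does not interchange sum and integral naively: it first applies a pigeonhole in the weight to replace $(1+|x-c(I_T)|/|I_T|)^{-4}$ by $2^{-3\ell_j}\indic{2^{\ell_j}I_{T_j}}$ at the cost of a factor $2^{-3\ell_j}$ in the density lower bound, and then within each $\ell$-class performs a further greedy clustering around extremal tiles, using tile disjointness and the maximal-scale property to bound $\sum_{T\in\T^{(\ell)}_j}|I_T|\lc 2^\ell|I_{T^{(\ell)}_j}|$. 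Only the dilated tiles $2^\ell I_{T^{(\ell)}_j}\times\omega_{T^{(\ell)}_j}$ are used disjointly in the final counting, recovering a $2^{-2\ell}$ factor that sums. Without these two devices your argument does not close; you should incorporate the $T^\pm$ selection and replace the pointwise Carleson packing step with the pigeonhole-then-cluster argument.
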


\begin{proof}
We select trees through an iterative procedure. Suppose that trees $T_j,$
$T^+_j,$ $T^-_j$ have been chosen for $j=1, \ldots, k$.
Let
\[
\P_k = \P \setminus \bigcup_{j=1}^{k} T_j \cup T^+_j \cup T^-_j.
\]
If $\density(\P_k) \leq \den/2$ then we terminate the procedure and set
\[
\T = \{T_1, T^+_1,T^-_1, \ldots, T_k, T^+_k, T^-_k\}.
\]
Otherwise, we may find a nonempty tree $T \subset \P_k$ such that
\begin{equation} \label{treehaslargedensity}
\frac{1}{|I_T|} \int_E \big(1 + \tfrac{|x - c(I_T)}{|I_T|}\big)^{-4} \sum_{k : \xi_{k-1}(x) \in \omega_T} |a_k(x)|^{r'}  \ dx
 > (\den/2)^{r'}.
\end{equation}
Choose $T_{k+1} \subset \P_{k}$ so that $|I_{T_{k+1}}|$ is maximal among all nonempty trees contained in $\P_k$ which satisfy (\ref{treehaslargedensity}), and so that $T_{k+1}$ is the maximal, with respect to inclusion, tree contained in $\P_k$ with top data $(I_{T_{k+1}}, \xi_{T_{k+1}}).$  Let $T^+_{k+1} \subset \P_k$ be the maximal tree contained in $\P_k$ with top data $(I_{T_{k+1}}, \xi_{T_{k+1}} + (C_2 - 1)/(2|I_{T_{k+1}}|))$ and $T^-_{k+1} \subset \P_k$ be the maximal tree contained in $\P_k$ with top data $(I_{T_{k+1}}, \xi_{T_{k+1}} - (C_2 - 1)/(2|I_{T_{k+1}}|)).$
Since each $T_j$ is nonempty and $\P$ is finite, this process will eventually stop.

To prove (\ref{dtreeintervalbound}), it will suffice to verify
\begin{equation} \label{thirddtreeintervalbound}
\sum_{j}|I_{T_j}| \lc \den^{-r'} |E|.
\end{equation}
To this end, we first observe that the tiles $I_{T_j} \times \omega_{T_j}$ are pairwise disjoint. Indeed, suppose that $(I_{T_j} \times \omega_{T_j}) \cap (I_{T_{j'}} \times \omega_{T_{j'}}) \neq \emptyset$ and $j < j'.$ Then, by the first maximality condition, we have $|I_{T_{j}}| \geq |I_{T_{j'}}|$ and so $I_{T_{j'}} \subset I_{T_j}$ and $|\omega_{T_{j}}| \leq |\omega_{T_{j'}}|.$ From the latter inequality, it follows that for every $P \in T_{j'}$, either $\omega_{T_{j}} \subset \omega_m$, $\omega_{T^+_{j}} \subset \omega_m$, or $\omega_{T^-_{j}} \subset \omega_m.$ Thus, $T_{j'} \subset T_j \cup T_j^+ \cup T_j^-$  which contradicts the selection algorithm.

Breaking the integral up into pieces and applying a pigeonhole argument, it follows from (\ref{treehaslargedensity}) that for each $j$ there is a positive integer $\ell_j$ such that
\begin{equation} \label{trunctreehaslargedensity}
|I_{T_j}| \leq C 2^{-3\ell_j} \den^{-r'} \int_{E \cap 2^{\ell_j}I_{T_j}} \,\sum_{k : \xi_{k-1}(x) \in \omega_{T_j}} |a_k(x)|^{r'}  \ dx.
\end{equation}

For each $\ell$ we let $\T^{(\ell)} = \{T_j : \ell_j = \ell\}$ and choose elements of $\T^{(\ell)}$: $T^{(\ell)}_1, T^{(\ell)}_2, \ldots$ and subsets of $\T^{(\ell)}$: $\T^{(\ell)}_1, \T^{(\ell)}_2, \ldots$ as follows. Suppose $T^{(\ell)}_j$  and $\T^{(\ell)}_j$  have been chosen for $j=1, \ldots, k.$ If $\T^{(\ell)} \setminus \bigcup_{j=1}^k \T^{(\ell)}_j$ is empty, then terminate the selection procedure. Otherwise, let $T^{(\ell)}_{k+1}$ be an element of $\T^{(\ell)} \setminus \bigcup_{j=1}^k \T^{(\ell)}_j$ with $|I_{T^{(\ell)}_{k+1}}|$ maximal, and let
\[
\T^{(\ell)}_{k+1} = \{T \in \T^{(\ell)} \setminus \bigcup_{j=1}^k \T^{(\ell)}_j : (2^\ell I_T \times \omega_T) \cap (2^\ell I_{T^{(\ell)}_{k+1}} \times \omega_{T^{(\ell)}_{k+1}}) \neq \emptyset\}.
\]

By construction, $\T^{(\ell)} = \bigcup_j \T^{(\ell)}_j$ and so
\begin{equation} \label{sumtruncIT}
\sum_{T \in \T^{(\ell)}} |I_{T}| \leq \sum_{j} \sum_{T \in \T^{(\ell)}_j} |I_{T}|.
\end{equation}
Using the fact that the tiles $I_{T_j} \times \omega_{T_j}$ are pairwise disjoint, and (twice) the fact that $|I_T| \leq |I_{T^{(\ell)}_j}|$ for every $T \in \T^{(\ell)}_j,$ we see that for each $j$
\[
\sum_{T \in \T^{(\ell)}_j} |I_{T}| \lc 2^{\ell} |I_{T^{(\ell)}_{j}}|.
\]
From (\ref{trunctreehaslargedensity}), we thus see that the right side
of (\ref{sumtruncIT}) is dominated by a constant times
\begin{align*}
&2^{-2\ell} \den^{-r'} \int_E \sum_j \indic{2^\ell I_{T^{(\ell)}_{j}}}(x) \sum_{k : \xi_{k-1}(x) \in \omega_{T^{(\ell)}_{j}}} |a_k(x)|^{r'}  \ dx
\\
&=2^{-2\ell} \den^{-r'} \int_E \sum_{k}|a_k(x)|^{r'}
\# \big\{j: (x,\xi_{k-1}(x))\in
 2^\ell I_{T^{(\ell)}_{j}}\times   \omega_{T^{(\ell)}_{j}} \big\}
\, dx
\\ &\le 2^{-2\ell} \den^{-r'} |E|.
\end{align*}
where we used the disjointness of the  rectangles $2^\ell I_{T^{(\ell)}_{j}} \times \omega_{T^{(l)}_{j}}$, and $\sum_{k = 1}^{K}|a_k(x)|^{r'} \leq 1$.
Summing over $\ell$, we obtain (\ref{thirddtreeintervalbound}).
\end{proof}

\section{The tree estimate} \label{treeestimatessection}
In this section we prove the  basic estimate for
the model operators
in the special case where the collection of multitiles is a tree.
 In what follows we use the notation
$\cV^r\!A f(x)$ for the $r$-variation   of
$k\mapsto A_k f(x)$, for a given  family of operators $A_k$ indexed by
$k\in \bbN$.

An essential tool introduced to harmonic analysis by Bourgain \cite{bourgain89pet}
is
%which has been used before to prove variation norm estimates
%for families of averages and truncated singular integrals is
{\it L\'epingle's inequality} for martingales (\cite{lepingle76lvd}).
%A special case of  this inequality concerns
Consider the martingale of dyadic averages
\[
\expect_k[f](x) = \frac{1}{|I_k(x)|}\int_{I_k(x)} f(y)\ dy
\]
where $I_k(x)$ is the dyadic interval of length $2^k$ containing $x$.
It is a special case of L\'{e}pingle's inequality
that
\begin{equation} \label{vedma}
\|\cV^r\expect_{(\cdot)}[f]\|_{L^p} \leq C_{p,r} \|f\|_{L^p}
\end{equation}
whenever $1 < p < \infty$ and $r > 2$.
Simple proofs
(based on jump inequalities) have been obtained in
 \cite{bourgain89pet}
and \cite{pisier-xu}
(see also \cite{demeter08wmc}, \cite{jones08svj}
for other expositions).
Inequality \eqref{vedma} has been extended to
various   families of convolution operators
(\cite{bourgain89pet},\cite{jkrw},  \cite{cjrw-hilbert},
\cite{jones08svj}).
Let $\psi$ be a Schwartz function on $\rea$ with $\int \psi = 1$, for each $k$ let $\psi_k = 2^{-k}\psi(2^{-k}\cdot)$ and let $A_k f(x)=\psi_k*f$.
Then
%the inequality
\begin{equation} \label{varestpsi}
\|\cV^r\!A f\|_{L^p} \le C'_{p,r} \|f\|_{L^p}, \quad 1<p<\infty, \,\,r>2,
\end{equation}
follows from (\ref{vedma}); the  essential tool  is the square function
estimate
\Be{square-function-estimate}
\Big\|\Big( \sum_{k=-\infty}^{\infty} 
\big|A_k f - \expect_k[f]\big|^2\Big)^{1/2}\Big\|_p, \quad 1<p<\infty.
\Ee
For a proof we refer to \cite{jkrw} or \cite{jones08svj}.

We shall use \eqref{varestpsi} to prove
the following estimate in terms of energy and density.
 The bound will be applied in Section \ref{mainargumentsection} with $q = r'$ and $q=1.$

\begin{proposition} \label{treeestimate} Let $\ene>0$, $0<\den<3$ and
  let  $T$ be a tree with energy bounded above by $\ene$ and density 
bounded above by $\den.$ Then, for each $1 \leq q \leq 2$
\begin{equation} \label{Lqfortree}
\Big\|\sum_{P \in T} \langle f, \phi_P\rangle \phi_P a_P \indic{E}
\Big\|_{L^q} \lc\ene \,\den^{\min(1,r'/q)} |I_T|^{1/q}.
\end{equation}
Furthermore, for $\ell \geq 0$ we have
\begin{equation} \label{treeestimateaway}
\Big\|\sum_{P \in T} \langle f, \phi_P\rangle \phi_P a_P \indic{E}
\Big\|_{L^q(\rea \setminus 2^\ell I_T)} \lc 2^{-\ell(N-10)} \ene \,\den^{\min(1,r'/q)}
|I_T|^{1/q}.
\end{equation}
\end{proposition}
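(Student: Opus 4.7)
\emph{Plan.} Split the tree $T$ into its $l$-overlapping sub-tree $T_o = \{P \in T : \xi_T \in C_2 \omega_l\}$ and its $l$-lacunary sub-tree $T_l = T \setminus T_o$; both are trees with the same top data, so it suffices to estimate the sums over $T_o$ and $T_l$ separately. By Riesz--Thorin interpolation in $q$ between the endpoints $q=1$ (where the $\den$-power is $1$) and $q=2$ (where the $\den$-power is $r'/2$), we reduce to proving just these two extremes, since $\min(1, r'/q)$ is the linear interpolate on $[1,2]$.

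\emph{The $q = 2$ estimate.} For $T_o$, Observation~\ref{treeobs}(i) and (iii) give near-orthogonality: the supports $C_3 \omega_u$ are disjoint across distinct scales, and the time intervals $I$ are disjoint at a common scale. Hence
\[
\Big\|\sum_{P \in T_o} \langle f, \phi_P \rangle \phi_P \Big\|_{L^2}^2 \lc \sum_{P \in T_o} |\langle f, \phi_P \rangle|^2 \le \ene^2 |I_T|
\]
by definition of energy. For $T_l$, modulating by $e^{-2\pi i \xi_T x}$ turns the wave packets into a Littlewood--Paley-type family, and the analogous $L^2$ bound follows from the square-function estimate (\ref{square-function-estimate}) combined with the energy bound. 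The extra factor $\den^{r'/2}$ is produced by multiplying by $a_P \indic{E}$ and applying Cauchy--Schwarz: the density inequality controls $\int_E w_{I_T}(x) \sum_k |a_k(x)|^{r'} \indic{\omega_T}(\xi_{k-1}(x)) \, dx \le \den^{r'} |I_T|$.

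\emph{The $q = 1$ estimate and main obstacle.} Here the goal is a pointwise bound $|\sum_P \langle f, \phi_P\rangle \phi_P(x) a_P(x)|\, \indic{E}(x) \lc \ene \cdot H(x)$ with $\int H \lc \den |I_T|$. On $T_o$, the sum at each dyadic scale equals, after modulation by $e^{-2\pi i \xi_T x}$, a dyadic conditional expectation of $f$, so summing in scale produces a Carleson-type maximal majorant bounded by $\ene$ times a density-controllable weight against the $a_P$. On $T_l$ the supports $\omega_u$ form a lacunary sequence on one side of $\xi_T$, so after modulation the partial sums of $\sum_P \langle f, \phi_P\rangle \phi_P$ form a Littlewood--Paley-like process in the scale parameter; using the duality $(\ell^r)^* = \ell^{r'}$ and the constraint $\sum_k |a_k(x)|^{r'} \le 1$ we dominate the linearised sum pointwise by $\ene$ times the $r$-variation of this process, which is in turn controlled by L\'epingle's inequality (\ref{vedma}) together with (\ref{square-function-estimate}). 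Integration against the density weight $(1+|x-c(I_T)|/|I_T|)^{-4}$ on $E$ then delivers $\int H \lc \den |I_T|$ directly. The decay bound (\ref{treeestimateaway}) follows because $I \subset I_T$ for every $P \in T$ and the wave packets satisfy (\ref{wavepacketadaptedbound}), producing the factor $2^{-\ell(N-10)}$. The hardest point is aligning the $r$-variational structure carried by the $a_P$ with the Littlewood--Paley/L\'epingle framework on $T_l$ so as to extract exactly the exponent $\min(1, r'/q)$ of $\den$; this is precisely the ingredient that upgrades the classical Lacey--Thiele argument from a maximal bound to a variation norm bound.
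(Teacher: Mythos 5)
Your plan has three significant problems.

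First, the Riesz--Thorin interpolation between $q=1$ and $q=2$ cannot produce the exponent $\min(1,r'/q)$. That exponent is piecewise linear and \emph{concave} as a function of $1/q$: it rises with slope $r'$ from $(1/2,\,r'/2)$ to $(1/r',\,1)$ and then sits at $1$ for $1/q \ge 1/r'$. Interpolation gives only the chord from $(1/2,\,r'/2)$ to $(1,\,1)$, which has the smaller slope $2-r'$, so at $q=r'$ it yields $\den^{\,r'+2/r'-2}$ instead of $\den^{1}$, strictly weaker when $\den<1$. Since the proposition is applied in Section~\ref{mainargumentsection} precisely at $q=r'$, the loss is fatal. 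The paper's actual logic is the reverse of interpolation: it proves the estimate uniformly for all $q\ge r'$ (where the exponent is $r'/q$) and then obtains $q<r'$ from the $q=r'$ case by a local H\"older inequality $\|\cdot\|_{L^q(J)}\le |J|^{1/q-1/r'}\|\cdot\|_{L^{r'}(J)}$ on each piece $J$. No two-endpoint interpolation in $q$ can recover the kink at $q=r'$.

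Second, you have the roles of the $l$-lacunary tree $T_l$ and the $l$-overlapping tree $T_o$ reversed. You assign the L\'epingle/$V^r$ argument to $T_l$, but that case is elementary: for an $l$-lacunary tree, at each $x$ at most one scale $2^m$ and at most one index $k$ can satisfy $\xi_{k-1}(x)\in\omega_l$, $\xi_k(x)\in\omega_h$ (the intervals $\omega_l$ for different scales are pairwise disjoint by separation), so the sum collapses to a single term and a direct pointwise estimate with the energy and density bounds suffices. The hard case is the $l$-overlapping tree $T_o$: there the $\omega_l$'s are nested, so for each $k$ the tiles with $\xi_{k-1}(x)\in\omega_l$, $\xi_k(x)\in\omega_h$ form a contiguous \emph{range} of scales, and after modulation by $e^{-2\pi i\xi_T x}$ the contribution for each $k$ is a difference $(e_T(\psi_{\ell_1}-\psi_{\ell_2}))*[\sum_P\langle f,\phi_P\rangle\phi_P]$. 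It is exactly these telescoping differences, indexed by $k$, that produce the $V^r_k$ norm to which L\'epingle's inequality (via (\ref{varestpsi})) and the Hardy--Littlewood maximal function apply. Locating the variational machinery on $T_o$ rather than $T_l$ is not a presentational detail: it is what makes the $\min(1,r'/q)$ density factor appear, after H\"older in the $k$-variable against $\sum_k|a_k(x)|^{r'}\le 1$ and the density bound.

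Third, the proposal omits the localization to the maximal dyadic intervals $J\in\J$ with $I\not\subset 3J$ for all $P\in T$, and the associated density estimate for $\Omega_J=\bigcup_{|I|\ge C''|J|}\omega_l$. This decomposition is not a cosmetic step: the split into small tiles ($|I|<C''|J|$) handled by a direct weighted estimate, versus large tiles handled by the variational argument above, is what yields both the tail decay (\ref{treeestimateaway}) and the local density input (\ref{usedensitybound}) on which the whole density gain rests. The vague claim that ``multiplying by $a_P\indic{E}$ and applying Cauchy--Schwarz'' produces $\den^{r'/2}$ is not substantiable: the $a_P$ are tile-dependent functions, and the density hypothesis is a weighted integral statement per tree, not a pointwise bound; untangling the two is precisely the content of the $J$-localization and the H\"older/L\'epingle argument you would need to spell out.
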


We remark that the  bounds above also hold for $2<q<\infty,$
but the result for this range of exponents is 
not needed for our purposes; it requires 
an additional $L^p$ estimate for 
$\sum_{P \in T} \langle f, \phi_P\rangle \phi_P$.

\begin{proof}
We begin with some  preliminary reduction. Every tree can be split into an
$l$-lacunary tree and an $l$-overlapping tree and it suffices to prove
the asserted estimate for these cases.

If the tree is $l$-overlapping we introduce further decompositions.
By breaking up $T$ into a bounded number of subtrees 
 we may and shall assume without loss of generality that for each $P \in T$,
 \Be{jconsolidation} \xi_T \in \omega_l + j|\omega_l|
\text{  for some integer $j$ with $|j| \leq C_2.$}
\Ee
Moreover we shall assume, for every $l$-overlapping tree $T$,
that either $\xi_T \leq \inf \omega_l$ for every $P \in T$ 
(in which case we refer to $T$ as  $l^-$-overlapping)
or $\xi_T > \inf \omega_l$ for every $P \in T$ 
(in which case we refer to $T$ as  $l^+$-overlapping).
Every $l$-overlapping tree can be split into an
$l^-$-overlapping and an
$l^+$-overlapping tree.
For the remainder of the proof, we assume without loss of generality
that $T$ is either $l$-lacunary or $l^+$-overlapping or
$l^-$-overlapping, 
and that for the last two categories property \eqref{jconsolidation}
is satisfied.

Let $\J$ be the collection of dyadic intervals $J$ which are maximal with respect to the property that $I \not\subset 3J$ for every $P \in T.$

Our first goal is to prove that for each  $J \in \J$
\begin{equation} \label{treeestimatesmallintervals}
\Big\|\sum_{P \in T : |I| < C''|J|} \langle f, \phi_P\rangle \phi_P
a_P \indic{E}
\Big\|_{L^q(J)} \lc \ene \den^{\min(1,r'/q)}
|J|^{1/q} \big(1 + \tfrac{\dis(I_T,J)}{|I_T|}\big)^{-(N-6)}
\end{equation}
where $C'' \geq 1$ is a constant to be determined later; we shall see
that $C''= 8C_1(C_2-1)^{-1}$ is an admissible choice.

 By H\"{o}lder's inequality, we may assume that $q \geq r'$.
Fix $P \in T$ with $|I| \leq C''|J|$. From the energy bound, we have
\begin{equation} \label{tilesmall|I|eq}
\| \langle f, \phi_P\rangle \phi_P a_P \indic{E} \|_{L^q(J)} \lc \ene \big(1 + \tfrac{\dis(I,J)}{|I|}\big)^{-N} \|a_P \indic{E}\|_{L^q(J)}.
\end{equation}
From the $\density$ bound applied to $\approx 1/(C_2 - 1)$ nonempty trees, each with top time interval $I$, we obtain
\[
\frac{1}{|I|} \int_E \Big(1 + \tfrac{|x - c(I)|}{|I|}\Big)^{-4} \sum_{k : \xi_{k-1}(x) \in \omega_l}|a_k(x)|^{r'}\ dx \lc \den^{r'}.
\]
Since $I \not\subset 3J$, it follows that $1 + |x - y|/|I| \leq C (1 + \dis(I,J)/|I|)$ for every $x \in J$ and $y \in I.$ Thus
\[
\|a_P \indic{E}\|_{L^{q}(J)}^{q} \leq \|a_P
\indic{E}\|_{L^{r'}(J)}^{r'} \lc
\big(1 + \tfrac{\dis(I,J)}{|I|}\big)^{4} |I| \den^{r'}
\]
where, above, we use the fact that $|a_P| \leq 1.$
%Since $|I| \leq C'' |J|$ the right side above is
%\[
%\lc \big(1 + \tfrac{\dis(I,J)}{|I|}\big)^{4} |J| \den^{r'}.
%\]
Thus  we can replace  (\ref{tilesmall|I|eq}) by
\[
\big\| \langle f, \phi_P\rangle \phi_P a_P \indic{E}\big \|_{L^q(J)} \lc
 \ene\,\den^{r'/q} |I|^{1/q} \big(1 + \tfrac{\dis(I,J)}{|I|}\big)^{-(N-4)}.
\]
Summing this estimate and using the fact that $T$ is a tree, we have
\[
\Big\|\sum_{P \in T : |I| = 2^{-k}|J|} \langle f, \phi_P\rangle \phi_P
a_P \indic{E}
\Big\|_{L^q(J)} \lc  \ene\,\den^{r'/q}  
(2^{-k}|J|)^{1/q} \big(1 + \tfrac{\dis(I_T,J)}{|I_T|}\big)^{-(N-6)}
\]
and summing over $k$ gives (\ref{treeestimatesmallintervals}).

We now use \eqref{treeestimatesmallintervals} to prove 
\eqref{treeestimateaway}  for $\ell \geq 4.$ 
Indeed, using the maximality of each $J$,
we see that if $\ell \geq 4$ and $J \cap (\rea \setminus 2^\ell I_T) \neq \emptyset$ then $\dis(I_T,J) \geq |J|/2$ and $|J| \geq 2^{\ell-3} |I_T|$. It thus follows from (\ref{treeestimatesmallintervals}) that
\[
\Big\|\sum_{P \in T} \langle f, \phi_P\rangle \phi_P a_P \indic{E} \Big\|_{L^q(J)} \lc (|I_T|/|J|) (\dis(I_T,J)/|J|)^{-2} \ene \den^{\min(1,r'/q)}
|I_T|^{1/q} 2^{-\ell(N-10)}
\]
whenever $J \cap (\rea \setminus 2^\ell I_T) \neq \emptyset.$ Summing over all $J$, we thus obtain (\ref{treeestimateaway}) for $\ell \geq 4.$

It remains to prove
\begin{equation*} 
%\label{igeqj}
\Big\|\sum_{P \in T} \langle f, \phi_P\rangle \phi_P a_P \indic{E}
\Big\|_{L^q(16I_T)} \lc \ene \den^{\min(1,r'/q)} |I_T|^{1/q}
\end{equation*}
which, by \eqref{treeestimatesmallintervals}, follows from
\begin{equation} \label{treeestimatelargeintervals} 
\Big\|
%\sum_{\substack{P \in T\\|I|\ge C''|J|}} 
\sum_{P \in T\,:\,|I|\ge C''|J|} 
\langle f, \phi_P\rangle \phi_P a_P \indic{E}
\Big\|_{L^q(16I_T)} \lc \ene \den^{\min(1,r'/q)} |I_T|^{1/q}.
\end{equation}
Again, by H\"older's inequality, we may assume that $q \geq r'.$ 
Let $$\Omega_J = \bigcup_{P \in T : |I| \geq C''|J|} \omega_l.$$
The first step  in the proof of \eqref{treeestimatelargeintervals} 
will be to demonstrate
\begin{equation} \label{usedensitybound}
\int_{J \cap E} \sum_{k : \xi_{k-1}(x) \in \Omega_J} |a_k(x)|^{r'}\ dx \lc \den^{r'} |J|.
\end{equation}
By the maximality of $J$ there is a multitile $P^*=(I^*,\om_u^*) \in T$ with 
$I^* \subset 3\tilde{J}$ where $\tilde{J}$ is the dyadic double of
$J$. 
This implies that there is a dyadic interval $J'$ with $|J| \leq |J'|
\leq 4 |J|$ and $\dis(J,J') \leq |J|$ and $I^* \subset J'.$ 
We wish to apply the density bound assumption to a  tree
consisting of the one multitile $P^*$, with top interval $J'$ and
suitable choice of the top frequency. 
We distinguish the cases that $T$ is $l^+$-overlapping, 
$l^-$-overlapping,  or $l$-lacunary.

If $T$ is $l^+$-overlapping then $T' = (\{P^*\},\xi_T,J')$ is a
tree. For every $P \in T$ 
we have $\omega_l\subset
[\xi_T-\frac{C_1}{2|I|},\xi_T+\frac{C_1}{2|I|})$
and thus, with  $|I| \geq C''|J|$, 
$\omega_l\subset [\xi_T - \frac{C_1}{2C''|J|}, \xi_T + \frac{C_1}{2C''|J|})$.
  Thus with the choice of  $C'' \ge \frac{8C_1}{C_2 - 1}$ 
we have $\Omega_J \subset
  \omega_{T'}.$

If $T$ is $l^-$-overlapping then $T' = (\{P^*\},\xi_T + \frac{C_2 -
  1}{4|J|},J')$ is a tree. Using  that $T$ is $l^-$-overlapping, we
see that $\omega_l\subset [\xi_T,\xi_T + \frac{C_2}{2C''|J|})$ for
  every $P \in T$ with $|I| \geq C''|J|$. Thus, by choosing $C'' \geq
   \frac{8 C_2}{C_2 - 1},$ we have $\Omega_J \subset 
\omega_{T'}.$

If $T$ is $l$-lacunary then $T' = (\{P^*\}, \xi_T - \frac{C_2 -
  1}{4|J|}, J')$ is a tree. Using  that $T$ is $l$-lacunary, we see
that 
$\omega_l \subset [\xi_T - \frac{C_1}{2C''|J|},\xi_T)$ for every $P
  \in T$ 
with $|I| \geq C''|J|$. Thus, by choosing $C'' \geq 8 \frac{C_1}{C_2 - 1}$
  as in the first case  we have $\Omega_J \subset \omega_{T'}.$

In any of the three cases, the $\density$ bound gives
\[
\frac{1}{|J'|} \int_E \Big(1 + \frac{|x - c(J')}{|J'|}\Big)^{-4} \sum_{k : \xi_{k-1}(x) \in \omega_{T'}}|a_k(x)|^{r'}\ dx \leq \den^{r'}
\]
and hence (\ref{usedensitybound}).

We now show that if $T$ is $l$-lacunary then 
\eqref{treeestimatelargeintervals} 
%(\ref{igeqj}) 
follows from (\ref{usedensitybound}). We start by observing that for
each $x$ there is at most one integer $m$ and at most one integer $k$
such that there 
exists a $P \in T$ with $|I| = 2^{m}$, $\xi_{k-1}(x) \in \omega_l$, and
$\xi_k (x) \in \omega_h.$ Indeed,  suppose such a $P$ exists, and $P'
\in T$ with $|I'| \ge |I|.$ If $|I|=|I'|$ the uniqueness of $k$ is
obvious.  Suppose $|I'|>|I|$. 
Since $T$ is $l$-lacunary, we have
$\inf(\omega_l') > \sup(\omega_l)$ by (\ref{separationofscales}), and
so  $\xi_{k-1}(x) < \inf(\omega_l').$ We also have $\xi_k (x)\ge\inf(\om_h)>
\sup(\omega_T) > \sup(\omega_l')$ since $C_2\omega_u \cap \omega_h =
\emptyset$ and $\xi_T \not\in C_2\omega_{l}'.$ It follows that there
is no $k'$ with $\xi_{k'-1}(x) \in \omega_l'$.

Now let 
$a(x) = a_k(x)$ if there exists an $m(x)$ as in the previous
paragraph with $2^{m(x)} \geq C''|J|$, and $a(x) = 0$ otherwise.
We then have
%\begin{multline*}
\[\Big\|\sum_{P \in T : |I| \geq C''|J|} \langle f, \phi_P\rangle \phi_P
a_P \indic{E}
\Big\|_{L^q(J)}^q
%\\
\leq \int_{J \cap E} \Big( |a(x)| \sum_{P \in T : |I| = 2^{m(x)}}
|\langle f, \phi_P\rangle \phi_P(x)| \Big)^{q}\ dx\,.
\]
%\end{multline*}
From the energy bound (applied to  multitiles) and the bound 
\eqref{wavepacketadaptedbound}
for $|\phi_P|$, the right side
is
bounded by
\[
\int_{J \cap E} \Big[|a(x)| \sum_{\substack{P \in T :\\ |I| = 2^{m(x)}}} \,\ene\,
|I|w_I(x) \Big]^{q}\ dx\,\lc\,
 \int_{J \cap E} \Big[|a(x)| \sum_{\substack{P \in T :\\ |I| = 2^{m(x)}}} \,\ene\,
\big(1 + \tfrac{|x - c(I)|}{|I|}\big)^{-N} \Big]^{q}\ dx.
\]
Noting that $\sum_{P \in T : |I| = 2^{m(x)}} (1 + |x - c(I)|/|I|)^{-N}
\leq C$, we see that the last display  is \[
\lc \ene^q \int_{J \cap E} |a(x)|^q\ dx
\lc  \ene^q \int_{J \cap E} \Big(\sum_{k : \xi_{k-1}(x) \in \Omega_J} |a_k(x)|^{r'}\Big)^{q/r'} \ dx,
\]
by our choice of $a(x)$.
Using (\ref{usedensitybound}), $q\ge r'$,  and the fact that $\sum|a_k(x)|^{r'}
\leq 1$, the right hand side is $\lc \ene^q \den^{r'} |J|.$
We may now sum over those $J\in \J$ which satisfy $J\cap 16
I_T\neq\emptyset$ and 
\eqref{treeestimatelargeintervals} follows for $l$-lacunary trees.

It remains to prove 
\eqref{treeestimatelargeintervals}
 for  the case when $T$ is $l$-overlapping
and satisfies condition \eqref{jconsolidation}.  For each $J\in \J$,
and each $x\in J\cap E$ we have by H\"older's inequality
\begin{align} \label{csvar}
&\sum_{P \in T : |I| \geq C''|J|} \langle f, \phi_P\rangle \phi_P(x)
 a_P (x)= \sum_{k} a_k(x) 
\sum_{\substack{P \in T : |I| \geq C''|J|\\
\xi_{k - 1}(x) \in \omega_l,
    \xi_k(x) \in \omega_h}} \langle f, \phi_P\rangle \phi_P(x)
%\indic{E}\Big \|_{L^q(J)}^q 
\\&\,\leq\,
\Big(\sum_{k : \xi_{k-1}(x) \in \Omega_J} |a_k(x)|^{r'}\Big)^{1/r'}
\Big(\sum_{k : \xi_{k-1}(x) \in \Omega_J}  \Big|
\sum_{\substack{P \in T : |I| \geq C''|J|\\ \xi_{k - 1}(x) \in \omega_l,
    \xi_k(x) \in \omega_h}} 
\langle f, \phi_P\rangle \phi_P(x)\Big|^r\Big)^{1/r}.
\notag
\end{align}

%\begin{multline} \label{csvar}
%\Big\|
%\sum_{P \in T : |I| \geq C''|J|} \langle f, \phi_P\rangle \phi_P(x)
% a_P (x)
%\indic{E}\Big \|_{L^q(J)}^q 
%\,\leq\,
%&\int_{J \cap E}
%\Big(\sum_{k : \xi_{k-1}(x) \in \Omega_J} |a_k(x)|^{r'}\Big)^{q/r'}\times\\
%\Big(\sum_{k : \xi_{k-1}(x) \in \Omega_J}  
%\Big|\sum_{{P \in T : |I| \geq C''|J|, 
%\xi_{k - 1}(x) \in \omega_l, \xi_k(x) \in \omega_h}} 
%\langle f, \phi_P\rangle \phi_P(x)\Big|^r\Big)^{q/r}\ dx
%\end{multline}

%By breaking up $T$ into a bounded number of subtrees, we may assume
%without loss of generality that for each $P \in T$, $\xi_T \in
%\omega_l + j|\omega_l|$ for some integer $j$ with $|j| \leq C_2.$ 

Now let $\psi$ be a 
Schwartz function with $\widehat{\psi}(\xi)=1$ for $|\xi| \leq C_1 +
C_3$ and $\widehat{\psi}(\xi) = 0$ for $|\xi| \geq 2C_1$. Define 
$\psi_{\ell} = 2^{-\ell}\psi(2^{-\ell}\cdot),$ and set
\[e_T(x)= e^{2\pi i \xi_T x}.\]
%\exp(2\pi i \xi_T x),\]
We will show that for any $x$ and any $k$ with  $\xi_{k-1}(x) <
\xi_k(x),$ there exist integers $\ell_1,\ell_2$ depending on $x$ and
$k$,   such that  $2^{\ell_1} \geq |J|$ and 
\begin{equation} \label{schwartzchoice}
\sum_{\substack{P \in T : |I| \geq C''|J|, \\\xi_{k - 1}(x) \in
    \omega_l,\, \xi_k(x) \in \omega_h}} \langle f, \phi_P\rangle \phi_P(x) =
%(\exp(2\pi i \xi_T \cdot)
(e_T(\psi_{\ell_1} - \psi_{\ell_2})) * \Big[\sum_{P \in T} \langle f,
\phi_P\rangle \phi_P \Big](x).
\end{equation}
From (\ref{separationofscales}) we have,  for each $\ell$ such that $2^\ell = |I|$ for some multitile $P$,
\[
%(e^{2\pi i \xi_T \cdot}
(e_T \psi_{\ell}) * \sum_{P \in T} \langle f, \phi_P\rangle \phi_P(x) =
\sum_{P \in T : |I| \geq 2^\ell} \langle f, \phi_P\rangle \phi_P(x).
\]
Thus, to prove (\ref{schwartzchoice}) it will suffice to show that there exist integers $\ell_1$ and $\ell_2$ such that
\begin{equation} \label{intervalofscales}
\{P \in T : |I| \geq C''|J|, \xi_{k - 1}(x) \in \omega_l, \xi_k(x) \in \omega_h \} = \{P \in T : 2^{\ell_1} \leq |I| \leq 2^{\ell_2}\}.
\end{equation}
Again using (\ref{separationofscales}), we see that for $P,P' \in T$ with $|I| < |I'|$ we have $\inf \omega_h' < \inf \omega_h$, and if we are in the setting of $\rho$-multitiles where $\rho$ is a $1$-index, we have the stronger inequality $\sup \omega_h' < \inf \omega_h$. Thus, (\ref{intervalofscales}) will follow after finding $\ell_1$ and $\ell_2$ with
\begin{equation} \label{intervalofscales2}
\{P \in T : \xi_{k - 1}(x) \in \omega_l\} = \{P \in T : 2^{\ell_1} \leq |I| \leq 2^{\ell_2}\}.
\end{equation}
Here we use assumption \eqref{jconsolidation}. 
The displayed equation  follows when $|j| > 1$ from the fact that
$\omega_l \cap \omega_l' = \emptyset$ if $P,P' \in T$ and $|I| <
|I'|;$ it follows when $j=0$ from the fact that the intervals
$\{\omega_l : P \in T\}$ are nested. Finally, when $j=\pm 1$ it
follows from the property that if $P,P',P'' \in T$, $|I|, \leq |I'|
\leq |I''|$ and $\omega_l \cap \omega_l'' \neq \emptyset$ then
$\omega_l'' \subset \omega_l' \subset \omega_l.$ Thus we have
established
\eqref{intervalofscales2} and consequently \eqref{intervalofscales}
and \eqref{schwartzchoice}.

Using (\ref{schwartzchoice}), we have
\begin{multline*}
\Big(\sum_{k : \xi_{k-1}(x) \in \Omega_J} \Big| \sum_{\substack{P \in
    T : |I| \geq C''|J|, \\\xi_{k - 1}(x) \in \omega_l, \xi_k(x) \in
    \omega_h}} \langle f, \phi_P\rangle \phi_P(x)\Big|^r\Big)^{1/r}\\
\leq\big \|
%(e^{2\pi i \xi_T \cdot}
(e_T\psi_k )* \sum_{P \in T} \langle
f, \phi_P\rangle \phi_P(x)
\big\|_{V^r_k(\Z^+ + \log_2(|J|))}
\end{multline*}
where the notation refers to the variation norm with respect to the
variable  $k$, restricted to $\{k\in \bbZ: |k|\ge \log_2(|J|)\}$.
For $\log_2(|J|) \leq k_1 < k_2$, we have
%\begin{multline*}
%(e^{2\pi i \xi_T \cdot}
\[
(e_T(\psi_{k_1} - \psi_{k_2})) * \sum_{P \in T} \langle f,
  \phi_P\rangle \phi_P \\=
%(e^{2\pi i \xi_T \cdot} \psi_{C +\log_2(|J|)}) * (e^{2\pi i \xi_T  \cdot}
(e_T \psi_{C +\log_2(|J|)}) * (e_T
(\psi_{k_1} - \psi_{k_2})) * \sum_{P \in T} \langle f, \phi_P\rangle \phi_P
\]
%\end{multline*}
and so, for $x \in J$
\begin{multline*}
\big\|
%(e^{2\pi i \xi_T \cdot}
(e_T \psi_k )* \sum_{P \in T} \langle f, \phi_P\rangle \phi_P(x)\big\|_{V^r_k(\Z^+ + \log_2(|J|))}
\\ \lc \sup_{x \in J} \sup_{R \geq |J|} \frac{2}{|R|} \int_{x -
  R}^{x+R} \big\| (e_T
%e^{2\pi i \xi_T \cdot}
\psi_k )* \sum_{P \in T} \langle f, \phi_P\rangle \phi_P(y)\big\|_{V^r_k(\Z^+ + \log_2(|J|))}\ dy.
\end{multline*}

We now integrate the $q$th power of the expressions in \eqref{csvar} over 
$E\cap J$ and obtain
\begin{align*}
&\Big\|
\sum_{P \in T : |I| \geq C''|J|} 
\langle f, \phi_P\rangle \phi_P
a_P \indic{E}\Big \|_{L^q(J)}^q 
\,\leq\,
\int_{J \cap E}
\Big(\sum_{k : \xi_{k-1}(t) \in \Omega_J}
|a_k(t)|^{r'}dt\Big)^{q/r'}\times\\
&\qquad\sup_{x \in J} \sup_{R \geq |J|} \frac{2}{|R|} \int_{x -
  R}^{x+R} \big\| (e_T
%e^{2\pi i \xi_T \cdot}
\psi_k )* \sum_{P \in T} \langle f, \phi_P\rangle
\phi_P(y)\big\|_{V^r_k(\Z^+ + \log_2(|J|))}\ dy.
\\
&\leq \den^{r'} \int_J \Big|\mathcal{M}\big[ \| \psi_k
  *
%(e^{-2\pi i \xi_T \cdot}
(e_T^{-1}\sum_{P \in T} \langle f, \phi_P\rangle
  \phi_P)\|_{V^r_k}\big](x)\Big|^q\,
dx
\end{align*}
where $\mathcal{M}$ is the Hardy-Littlewood maximal operator. For the
last inequality we have used $q\ge r'$,  $\sum|a_k(x)|^{r'}\le 1$ and 
inequality \eqref{usedensitybound}.
Summing over $J\in\J$ gives
\begin{multline*} \Big\|\sum_{P \in T : |I| \geq C''|J|}  \langle f, \phi_P\rangle \phi_P
a_P \indic{E} \Big\|_{L^q(16I_T)}^q
\\ \lc 
%\ene^q \den^{r'} |I_T| + 
\den^{r'} \big\| \mathcal{M}\big[ \| \psi_k *
%(e^{-2\pi i \xi_T \cdot}
(e_T^{-1}\sum_{P \in T} \langle f, \phi_P\rangle \phi_P)\|_{V^r_k} \big](x)\big\|_{L^q_x(16I_T)}^q.
\end{multline*}
Since $q \leq 2$, it follows from H\"older's inequality
that the right side above is
\[
%\lc \ene^q \den^{r'} |I_T| +  
\lc\den^{r'} |I_T|^{(2-q)/2} \big\|
\mathcal{M}\big[ \| \psi_k *
%(\exp(-2\pi i \xi_T \cdot)
(e_T^{-1}\sum_{P \in T} \langle f, \phi_P\rangle \phi_P)\|_{V^r_k} \big](x)
\big\|_{L^2_x(16I_T)}^q.
\]
Applying the variation estimate (\ref{varestpsi})
%from \footnote{Change} Appendix \ref{variationalestimatesection}
with $p=2$ and the $L^2$ estimate for $\mathcal{M}$ one sees that the display above is
\[
%\lc \ene^q \den^{r'} |I_T| +  
\lc\den^{r'} |I_T|^{(2-q)/2} \Big\| \sum_{P \in T} \langle f, \phi_P\rangle \phi_P\Big\|_{L^2}^q.
\]

To finish the proof, it only remains to see that $\| \sum_{P \in T}
\langle f, \phi_P\rangle \phi_P\|_{L^2}^2
\lc \ene^2 |I_T|.$
The left side of this inequality is dominated by
%\begin{align*}&
\[ \sum_{P \in T} \sum_{P' \in T}|\< f, \phi_P\>| |\< f, \phi_P'\>| |\<\phi_P,\phi_{P'}\>|
%\\ &
\leq 2 \sum_{P \in T} |\< f, \phi_P\>|^2 \sum_{P' \in T} |\<\phi_P,\phi_{P'}\>|.\]
%\end{align*}
Since $T$ is an $l$-overlapping tree, we have $\inn{\phi_P}{\phi_{P'}}$
unless $|I| = |I'|$, in which case, we have $|\inn{\phi_P}{\phi_{P'}}|
\lc (1 + \dis(I,I')/|I|)^{-N}.$ Therefore we obtain the estimate
%It follows that the right side above is
\[ \Big\| \sum_{P \in T}
\langle f, \phi_P\rangle \phi_P\Big\|_{L^2}^2
\lc \sum_{P \in T} |\inn{ f}{ \phi_P}|^2 \leq C \ene^2 |I_T|.
\]
This concludes the proof of
\eqref{treeestimatelargeintervals} and thus the proof of the proposition.
\end{proof}

\section{Two auxiliary estimates}\label{auxiliarysection}
Before we give the argument on how to decompose our operators
into trees with suitable energy and density bounds we need two
auxiliary estimates.

The following proposition   can be found in \cite{thiele06wpa}, p. 12,
or as  a special case of a lemma from \cite{grafakos04bmd}.

\begin{proposition} \label{tilessmallmaxprop}
Let $T$ be an $l$-overlapping tree. Let $\la>0$ and
$\Omega_{\la,D} = \{\ma_D[\indic{F}] > \lambda\}$ where  $\ma_D$ is the maximal dyadic average operator.
 Then
\[
\frac{1}{|I_T|}\sum_{P \in T : I \not \subset
 \Omega_{\la,D}}|\<f,\phi_P\>|^2
\lc \lambda^2\,.
\]
\end{proposition}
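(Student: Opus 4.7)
The plan is to refine the universal energy bound of Lemma~\ref{univ-den-ene-bound} by exploiting the density restriction implied by $I\not\subset\Omega_{\lambda,D}$. The starting point will be a pointwise wave-packet bound: for $P\in T$ with $I\not\subset\Omega_{\lambda,D}$, there is $x\in I$ with $\mathcal{M}_D\indic{F}(x)\le\lambda$. Examining the dyadic ancestors of $I$ (each containing $x$, hence with $F$-density at most $\lambda$) together with the adjacent dyadic intervals of the same size will yield $|F\cap 2^jI|\lesssim\lambda\,2^j|I|$ for every $j\ge 0$, and hence
\[
|\langle f,\phi_P\rangle|\;\lesssim\;|I|^{1/2}\int\indic{F}(y)(1+|y-c(I)|/|I|)^{-N}\,dy\;\lesssim\;\lambda|I|^{1/2}.
\]

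To pass from this pointwise estimate to the sum-of-squares bound with the sharp $\lambda^2|I_T|$ dependence (avoiding a logarithmic loss from summing across scales), I will perform a Calder\'on--Zygmund decomposition of $f$ at level $\lambda$. Writing $\Omega_{\lambda,D}=\bigsqcup_k J_k$ as its maximal dyadic subintervals, I will split $f=g+b$ with $g=\sum_k\langle f\rangle_{J_k}\indic{J_k}$ and $b_k=(f-\langle f\rangle_{J_k})\indic{J_k}$. The Whitney property will give $|g|\le 2\lambda\,\indic{\Omega_{\lambda,D}}$, while each $b_k$ has mean zero and $\|b_k\|_{L^1}\lesssim\lambda |J_k|$. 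Applying Lemma~\ref{univ-den-ene-bound} to $g/(2\lambda)$, which satisfies $|g/(2\lambda)|\le\indic{\Omega_{\lambda,D}}$, will then give immediately
\[
\sum_{P\in T}|\langle g,\phi_P\rangle|^2\;\lesssim\;\lambda^2|I_T|.
\]

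For the bad part, the key geometric observation is that whenever $P\in T$ satisfies $I\not\subset\Omega_{\lambda,D}$, any Whitney interval $J_k$ meeting $I$ must be contained in $I$ (both are dyadic, $J_k\subset\Omega_{\lambda,D}$, and $I\not\subset\Omega_{\lambda,D}$). Using the vanishing mean of $b_k$ together with the smoothness bound~\eqref{wavepacketadaptedbound}, I will obtain $|\langle b_k,\phi_P\rangle|\lesssim\lambda|J_k|^2/|I|^{3/2}$ for $J_k\subset I$ and rapid Schwartz decay for $J_k\cap I=\emptyset$. Squaring and summing over $P\in T$ with $I\supset J_k$ at each dyadic scale (at most one tile per scale, by Observation~\ref{treeobs}(iii)) will give $\sum_{P}|\langle b_k,\phi_P\rangle|^2\lesssim\lambda^2|J_k|$, so the diagonal $k=k'$ contributions sum to $\lambda^2\sum_k|J_k|\lesssim\lambda^2|I_T|$, and the off-diagonal cross terms $\sum_{k\ne k'}\langle b_k,Qb_{k'}\rangle$ will be controlled by the almost-orthogonality of the $\phi_P$ (from the $l$-overlapping structure) combined with the rapid decay of $Qb_{k'}$ away from $J_{k'}$. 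Combining the good and bad estimates via $|\langle f,\phi_P\rangle|^2\le 2|\langle g,\phi_P\rangle|^2+2|\langle b,\phi_P\rangle|^2$ will finish the proof.

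The main obstacle is this last step: controlling the off-diagonal cross terms for the bad part to attain the sharp $\lambda^2$ power, since neither the pointwise bound $|\langle b,\phi_P\rangle|\lesssim\lambda|I|^{1/2}$ nor a direct Minkowski inequality in $k$ recovers the correct dependence. The detailed argument exploits the tree structure and Whitney geometry together, and is carried out in \cite{thiele06wpa}, p.~12; it also appears as a special case of a lemma in \cite{grafakos04bmd}.
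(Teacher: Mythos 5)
The paper does not actually prove this proposition (it cites \cite{thiele06wpa}, p.~12 and \cite{grafakos04bmd}), so the question is whether your proposal closes on its own. It does not: the bad-part estimate is where it breaks. You claim that the mean-zero property of $b_k=(f-\langle f\rangle_{J_k})\indic{J_k}$ combined with \eqref{wavepacketadaptedbound} gives $|\langle b_k,\phi_P\rangle|\lesssim\lambda|J_k|^2/|I|^{3/2}$, but \eqref{wavepacketadaptedbound} controls the derivatives of the \emph{demodulated} packet $\tilde\phi_P=e^{-2\pi ic(\omega_u)\cdot}\phi_P$, not of $\phi_P$. One has $|\phi_P'|\sim|c(\omega_u)|\,|I|^{-1/2}(1+|{\cdot}-c(I)|/|I|)^{-N}$, which does not decay as the tile's center frequency grows. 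Equivalently, writing $\langle b_k,\phi_P\rangle=\int (b_k e^{-2\pi ic(\omega_u)\cdot})\,\overline{\tilde\phi_P}$, the function $b_ke^{-2\pi ic(\omega_u)\cdot}$ is not mean zero, so Taylor expanding the smooth factor $\tilde\phi_P$ around $c(J_k)$ gives no cancellation at all. The telltale sign is that your argument never invokes the $l$-overlapping hypothesis or the top frequency $\xi_T$, which is exactly what is needed: since $\xi_T\in C_2\omega_l$ for every $P\in T$, one has $|\xi_T-c(\omega_u)|\lesssim 1/|I|$ uniformly in $P$, so one should run a $\xi_T$-modulated Calder\'on--Zygmund decomposition, $g_k=\bigl(\tfrac1{|J_k|}\int_{J_k}f\,e^{-2\pi i\xi_T\cdot}\bigr)e^{2\pi i\xi_T\cdot}\indic{J_k}$ and $b_k=(f-g_k)\indic{J_k}$. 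Then $b_ke^{-2\pi i\xi_T\cdot}$ is mean zero, $|g_k|\le 2\lambda$ still holds by the Whitney property, and $e^{2\pi i(\xi_T-c(\omega_u))\cdot}\tilde\phi_P$ now genuinely has derivative $\lesssim|I|^{-3/2}(1+|{\cdot}-c(I)|/|I|)^{-N}$, so the mean zero pays off as you intended.

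Two further remarks. Your warm-up claim $|F\cap 2^jI|\lesssim\lambda\,2^j|I|$ does not follow from $\mathcal{M}_D\indic{F}(x)\le\lambda$ for some $x\in I$: the dyadic intervals adjacent to the ancestors of $I$ need not contain $x$, so their $F$-density is unconstrained (e.g.\ $I=[0,1)$ and $F\subset[-1,0)$ gives $\mathcal{M}_D\indic{F}\equiv 0$ on $I$ while $|F\cap 2I|$ can be arbitrary). This is really a delicacy of the proposition as stated; the application in Section~\ref{mainargumentsection} feeds in the non-dyadic maximal function, for which the claim is true. Finally, you explicitly leave the off-diagonal cross-term control to the cited references; as written the proof is therefore incomplete at that point, independent of the modulation issue above.
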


The second auxiliary estimate is  the special case of  an estimate
from \cite{grafakos04bmd}, but we  will provide a proof for
convenience.

\begin{proposition} \label{tileslargemaxprop}
Let $\P$ be a finite set of multitiles, and let $\lambda > 0,$ $F
\subset \rea,$ and $|f| \leq \indic{F}.$
Let
$\Omega_\la = \{\ma[\indic{F}] > \lambda\}.$
Then
\begin{equation} \label{tileslargemaxpropc}
\Big\|\sum_{P \in \P : I \subset \Omega} \<f,\phi_P\>\phi_P a_P \Big\|_{L^1(\rea \setminus \Omega)} \lc \frac{|F|}{\lambda^{1/r}}.
\end{equation}
\end{proposition}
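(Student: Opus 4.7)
The plan is to exploit a Whitney-type decomposition of $\Omega$ together with the tree estimate of Proposition \ref{treeestimate} and the rapid decay of wave packets away from their time intervals. First I would decompose $\Omega=\bigcup_{j}Q_j$ into its maximal dyadic subintervals. The crucial geometric fact is that for each such $Q_j$, the dyadic parent $\widehat Q_j$ is not contained in $\Omega$, so there exists $y\in\widehat Q_j$ with $\ma\indic F(y)\le \lambda$, forcing
\[
|F\cap C\widehat Q_j|\,\lesssim\, \lambda\,|\widehat Q_j|
\]
for every fixed $C\ge 1$. Thus, in a bounded dilate of each $Q_j$, the density of $F$ is controlled by $\lambda$.

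Next I would group the multitiles according to which $Q_j$ contains the time interval, writing $\P_j=\{P\in\P:\,I\subset Q_j\}$, and split $f=f\eta_{Q_j}+f(1-\eta_{Q_j})$ for a smooth cutoff $\eta_{Q_j}$ adapted to a sufficiently large dilate of $Q_j$. The contribution of $f(1-\eta_{Q_j})$ is handled directly using the decay bound \eqref{wavepacketadaptedbound}: one gets $|\inn{f(1-\eta_{Q_j})}{\phi_P}|\lesssim |I|^{1/2}(|I|/|Q_j|)^{N-1}|F|$, which produces a geometrically summable tail acceptable for the target bound. For the main piece $f\eta_{Q_j}$ one has $|f\eta_{Q_j}|\le \indic{F\cap CQ_j}$ and $|F\cap CQ_j|\lesssim \lambda|Q_j|$, so the energy of $\P_j$ with respect to $f\eta_{Q_j}$ is improved compared to the universal bound of Lemma \ref{univ-den-ene-bound}.

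Then I would decompose $\P_j$ into trees via Propositions \ref{energyincrementprop} and \ref{densityincrementprop} (here one takes the density with respect to $\bbR\setminus\Omega$ rather than the original $E$; the proofs go through without change and the universal density bound is preserved). For each tree $T$ in the decomposition one applies Proposition \ref{treeestimate} with $q=r'$ and combines with H\"older's inequality and the decay estimate \eqref{treeestimateaway}, summing over dyadic annuli $2^{\ell+1}I_T\setminus 2^\ell I_T$: this converts the $L^{r'}$ tree bound $\ene\,\den\,|I_T|^{1/r'}$ into an $L^1(\bbR\setminus\Omega)$ bound of $\lesssim \ene\,\den\,|I_T|$ per tree, because the factor $2^{\ell/r}|I_T|^{1/r}$ from H\"older is absorbed by the decay $2^{-\ell(N-10)}$. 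Summing $\sum_T\ene\,\den\,|I_T|$ with the joint bound $\sum_T|I_T|\lesssim\min(\ene^{-2}\lambda|Q_j|,\den^{-r'}|Q_j|)$ and then summing over $j$ using $\sum_j|Q_j|=|\Omega|\lesssim|F|/\lambda$ yields the desired estimate.

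The main obstacle will be the precise balancing of the energy and density decomposition parameters that is needed to extract exactly the exponent $\lambda^{-1/r}$ rather than $\lambda^{-1}$ or $\lambda^{-1/2}$. The gain over the classical Carleson exceptional-set estimate $\lesssim|F|$ is entirely due to the variational constraint $\sum_k|a_k(x)|^{r'}\le 1$, which enters through the density definition with exponent $r'$ and is matched by the choice $q=r'$ in the tree estimate; the $L^{r'}/L^1$ interpolation together with the localization of $F$ near each $Q_j$ at density $\lambda$ must be tuned so that the final product $\ene\cdot\den\cdot\sum|I_T|$ summed over $j$ comes out to $|F|\cdot\lambda^{-1/r}$.
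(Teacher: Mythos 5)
Your proposal takes a substantially different route from the paper, and I believe it has a genuine gap at the density-iteration step.

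The paper's proof is much more elementary and does not invoke trees, energy, or density at all. It stratifies $\Omega_\lambda$ by \emph{Whitney depth}: for each $\ell$ it considers dyadic $I_\ell\subset\Omega_\lambda$ with $2^\ell I_\ell\subset\Omega_\lambda$ but $2^{\ell+1}I_\ell\not\subset\Omega_\lambda$. The condition $2^{\ell+1}I_\ell\not\subset\Omega_\lambda$ yields $|F\cap 2^{j+1}I_\ell|\lc 2^{\max(\ell,j)}\lambda|I_\ell|$, and Bessel's inequality at the fixed scale $|I|=|I_\ell|$ gives an $\ell^2$ bound $\bigl(\sum_{P:I=I_\ell}|\<f,\phi_P\>|^2\bigr)^{1/2}\lc(2^\ell\lambda|I_\ell|)^{1/2}$ together with a pointwise $\ell^\infty$ bound. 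Interpolating to $\ell^r$, using pointwise H\"older against $\sum_k|a_k(x)|^{r'}\le 1$ with the observation that at a fixed scale there is at most one $P$ with $\xi\in\omega_l$, and then integrating the single wave packet $\phi_{P_0}$ over $\bbR\setminus\Omega_\lambda$ (which is distance $\geq 2^{\ell-1}|I_\ell|$ away) gives a bound $\lc 2^{-\ell(N-2)}\lambda^{1/r'}|I_\ell|$. Finally the Carleson-packing estimate $\sum_{I\in\mathcal I_\ell}|I|\lc|\Omega_\lambda|$ and the weak $(1,1)$ bound $|\Omega_\lambda|\lc|F|/\lambda$ close the argument, giving $\lambda^{1/r'}\cdot|F|/\lambda=|F|\lambda^{-1/r}$. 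The factor $\lambda^{-1/r}$ comes purely from this interpolation-plus-pointwise-H\"older scheme; no tree selection is involved.

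Your proposal routes through Propositions \ref{energyincrementprop}, \ref{densityincrementprop}, and \ref{treeestimate}. There is a concrete obstruction: Proposition \ref{densityincrementprop} returns the bound $\sum_T|I_T|\lc\den^{-r'}|E|$, whose proof counts the measure of $E$ through the disjointness of the tiles $2^\ell I_{T_j}\times\omega_{T_j}$. When you replace $E$ by $\bbR\setminus\Omega_\lambda$ (which you must, since you want an $L^1(\bbR\setminus\Omega_\lambda)$ bound and the factor $\indic E$ in the tree estimate must be $\indic{\bbR\setminus\Omega_\lambda}$), the right-hand side becomes $\den^{-r'}\cdot\infty$ and the density iteration produces no information. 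It is not enough to note that the individual density values remain $\lc 1$; the mechanism that converts a density decrement into a control on $\sum_T|I_T|$ needs $|E|<\infty$. Without density reduction, using only the energy iteration (energy $\lc 2^{-m}$, $\sum_{T\in\T_m}|I_T|\lc 2^{2m}\lambda|Q_j|$) and the universal density bound, the per-$m$ contribution after the tree estimate is $\ene\cdot\den\cdot\sum_T|I_T|\lc 2^{-m}\cdot 1\cdot 2^{2m}\lambda|Q_j|=2^m\lambda|Q_j|$, which diverges in $m$. Your sentence ``the proofs go through without change and the universal density bound is preserved'' hides exactly this failure, and your admission that the ``precise balancing'' is the main obstacle is, in fact, where the proposal does not close. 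There is also a secondary overstatement: the energy of $\P_j$ relative to $f\eta_{Q_j}$ is \emph{not} uniformly improved to $\sqrt\lambda$; for a subtree $T$ with $|I_T|\ll|Q_j|$ located interior to $Q_j$, the single-tree energy bound only gives $\lc\sqrt{|F\cap 3I_T|/|I_T|}$, which can be of order $1$. What does improve is the Bessel-type sum over the selected $S_j$, which is why (\ref{etreeintervalbound}) localizes to $|F\cap CQ_j|$; but that improvement alone is insufficient without the density side working.
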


\begin{proof}
%[Proof of Proposition \ref{tileslargemaxprop}]
Fix $\ell$ and let $I_\ell \subset \Omega_\la$ be a dyadic interval satisfying
\begin{equation} \label{2lIOmega}
2^\ell I_\ell \subset \Omega_\la \text{\ and\ } 2^{\ell+1} I_\ell \not \subset \Omega_\la.
\end{equation}
By  Minkowski's inequality, we estimate
\begin{multline*}
\Big(\sum_{P : I = I_\ell} |\<f,\phi_P\>|^2\Big)^{1/2}
\\
\leq \Big(\sum_{P : I = I_\ell} |\<\indic{4I_\ell}
f,\phi_P\>|^2\Big)^{1/2} + \sum_{j=2}^{\infty} \Big(\sum_{P : I =
  I_\ell} |\<\indic{2^{j+1}I_\ell
 \setminus 2^jI_\ell} f,\phi_P\>|^2\Big)^{1/2}.
\end{multline*}
Using orthogonality, the right hand side  is bounded by
\[
 (4|I_\ell|)^{1/2}\|\indic{4I_\ell} f\phi_{P_0}\|_{L^2} + \sum_{j=2}^{\infty} (2^{j+1}|I_\ell|)^{1/2}\|\indic{2^{j+1}I_\ell \setminus 2^jI_\ell} f \phi_{P_0}\|_{L^2}
\]
where $P_0$ is any multitile with $I = I_\ell.$
Applying the bounds (\ref{wavepacketadaptedbound}) and $|f| \leq
\indic{F}$ , we see that the last  display  is
\[
\lc |F \cap 4I_\ell|^{1/2} + \sum_{j=2}^{\infty} C 2^{-j(N-1)} |F \cap 2^{j+1}I_\ell|^{1/2}.
\]
Since $2^{\ell+1}I_\ell \not\subset \Omega_\la,$ we have $|F \cap 2^{j+1}I_\ell|
\leq C 2^{\max(\ell,j)} |I_\ell| \lambda$ for each $j$. Thus, the last
display  is $\lc(2^l \lambda |I_l|)^{1/2}$ and we have proved
\[
\Big(\sum_{P : I = I_\ell} |\<f,\phi_P\>|^2\Big)^{1/2}\lc
(2^\ell \lambda |I_\ell|)^{1/2}.
\]
Similarly,
\[
\sup_{P : I = I_\ell} |\<f,\phi_P\>| \lc 2^\ell \lambda |I_\ell|^{1/2}
\]
and so, by interpolation,
\begin{equation}
\Big(\sum_{P : I = I_\ell} |\<f,\phi_P\>|^r\Big)^{1/r} \lc (2^\ell \lambda)^{1/r'} |I_\ell|^{1/2}
\end{equation}
whenever $2 \leq r \leq \infty.$
For each $\xi$, $I_\ell$ there is at most one $P \in \P$ with
$\xi \in \omega_l$ and $I = I_\ell$. Thus, using the fact that,
for each $x$, $\sum_{k=1}^{K}|a_k(x)|^{r'} \leq 1$, we see that
\[
\Big\|\sum_{P \in \P : I = I_\ell} \<f,\phi_P\>\phi_P a_P \Big\|_{L^1(\rea \setminus \Omega_\la)} \lc  (2^\ell \lambda)^{1/r'} |I_\ell|^{1/2} \|\phi_{P_0}\|_{L^1(\rea \setminus \Omega_\la)}
\]
where $P_0$ is any multitile with $I_0 = I_\ell.$ Using the fact that
 $2^\ell I_\ell \subset \Omega_\la$, it follows that the right side above is
\[
\lc 2^{-\ell(N-2)} \lambda^{1/r'} |I_\ell|.
\]

For $\ell \geq 0$ let $\mathcal{I}_\ell$ be the set of all dyadic intervals satisfying (\ref{2lIOmega}). If $I \subset \mathcal{I}_\ell$ then for each $j > 0$ there are at most 2 intervals $I' \in \mathcal{I}_\ell$ with $I' \subset I$ and $|I'| = 2^{-j}|I|.$ By considering the collection of maximal dyadic intervals in $\mathcal{I}_\ell,$ one sees that
\[
\sum_{I \in \mathcal{I}_\ell}|I| \lc |\Omega_\la|
\]
Thus,
\[
\|\sum_{P \in \P : I \in \mathcal{I}_\ell} \<f,\phi_P\>\phi_P a_P \|_{L^1(\rea \setminus \Omega_\la)} \lc 2^{-\ell(N-2)} \lambda^{1/r'} |\Omega_\la|.
\]
Summing over $\ell$ and applying the weak-type 1-1 estimate for $\ma$ then gives (\ref{tileslargemaxpropc}).
\end{proof}

\section{Conclusion of the proof} \label{mainargumentsection}
Let $r>2$ and $r'\le p< \frac{2r}{r-2}$.
We shall conclude the proof of
\eqref{halfgbound}, with
 $1 \leq |E| \leq 2$,
 and thus of Theorem \ref{maintheorem}.
It will then suffice, by Chebyshev's inequality,  to show
\begin{equation} \label{L1except}
\int_{E\setminus G} | S^{\rho}[f](x)|\, dx  \leq C |F|^{1/p}
\end{equation}
for any  measurable set $E$ with $1\le |E|\le 2$, $|f|\le \indic{F}$
and  some exceptional
set
$G=G(E,F)$ with $|G| \leq 1/4.$

We shall repeatedly apply Propositions \ref{energyincrementprop} and
\ref{densityincrementprop}. 
By Lemma \ref{univ-den-ene-bound} 
the density of  $\P$ (with respect to
the set $E$ above) and the
energy (with respect to $f$) are  bounded by a universal constant $C$.

We distinguish between the cases $|F| > 1$ and $|F|\le 1$ and
first consider the case when $|F| > 1$.
Repeatedly applying Propositions \ref{energyincrementprop} and \ref{densityincrementprop}
we write $\P$ as the disjoint union
\[
\P = \bigcup_{j \geq 0} \bigcup_{T \in \T_j} T
\]
where each $\T_j$ is a collection of trees $T$ each of which have energy bounded by $C 2^{-j/2}|F|^{1/2}$, density bounded by $C 2^{-{j/r'}},$ and satisfy
\[
\sum_{T \in \T_j} |I_T| \lc 2^j.
\]
For each $j$ we apply Proposition \ref{energyincrementprop} again, this time using (\ref{etreebmobound}) and (\ref{pisatree}) to write
\[
\bigcup_{T \in \T_j}T = \bigcup_{k \geq 0} \bigcup_{T \in \T_{j,k}} T
\]
where each tree $T \in \T_{j,k}$ has energy bounded by $C 2^{-(j+k)/2}|F|^{1/2}$, density bounded by $C 2^{-{j/r'}},$ and satisfies
\begin{equation} \label{L1jktree}
\sum_{T \in \T_{j,k}} |I_T| \lc 2^j.
\end{equation}
Moreover, for every $\ell \geq 0$
\begin{equation} \label{BMOjktree}
\Big\|\sum_{T \in \T_{j,k}} \indic{2^\ell I_T}\Big\|_{BMO} \lc 2^{2\ell} 2^{j+k} |F|^{-1}.
\end{equation}
Inequality \eqref {L1jktree} implies $\|\sum_{T \in \T_{j,k}} \indic{2^\ell
  I_T}\|_{L^1} \lc 2^{\ell+j}$, and we may interpolate
the $L^1$ and the $BMO$ bound. Here we use a standard
 technique involving the sharp maximal function from
\S5 in \cite{fs-hardy}. It  follows that for $1 \leq q < \infty$
\[
\Big\|\sum_{T \in \T_{j,k}} \indic{2^\ell I_T}\Big\|_{L^q} \lc 2^{j+k+2\ell} |F|^{-1/q'}.
\]
Let $\epsilon > 0$ be small and $C' > 0$ be large, depending on $p,q,r$. For each $j,k,l$ define
\[
G_{j,k,\ell} = \Big\{x: \sum_{T \in \T_{j,k}} \indic{2^\ell I_T} \geq C' |F|^{-1/q'} 2^{(1 + \epsilon)(j + k + 2\ell)}\Big\}.
\]
By Chebyshev's inequality, we have
\[|G_{j,k,\ell}| \leq c' 2^{-\epsilon(j + k + 2\ell)},\] so setting $G = \bigcup_{j,k,\ell \geq 0}G_{j,k,\ell}$ we have $|G| \leq 1/4.$

Applying Minkowski's inequality gives
\begin{multline*}
\Big\|\indic{E} \sum_{P \in \P} \<f,\phi_P\>\phi_P a_P
\Big\|_{L^{1}(\rea \setminus G)}
\\ \leq \sum_{j,k \geq  0} \Big(\big\|\indic{E} \sum_{T \in \T_{j,k}} \indic{I_T} \sum_{P \in T} \<f,\phi_P\>\phi_P a_P
\big\|_{L^{1}(\rea \setminus G_{j,k,0})}
%\right.
\\  + \sum_{\ell \geq 1}
\big\|\indic{E} \sum_{T \in \T_{j,k}} \indic{2^{\ell}I_T \setminus
  2^{\ell-1}I_T} \sum_{P \in T} \<f,\phi_P\>\phi_P a_P
\big\|_{L^{1}(\rea \setminus G_{j,k,\ell})}
\Big).
\end{multline*}
From H\"{o}lder's inequality, Fubini's theorem, and the definition of $G_{j,k,l}$, it follows that the right side above is $\lc(S_1 + S_2)$ where
\[
S_1 =  \sum_{\substack{j,k \geq 0}} |F|^{-1/(q'r)} 2^{(1 + \epsilon)(j + k)/r} \Big(\sum_{T \in \T_{j,k}} \big\|\indic{E} \sum_{P \in T} \<f,\phi_P\>\phi_P a_P\big\|_{L^{r'}(\rea)}^{r'}\Big)^{1/r'}
\]
and
\[
S_2 =  \sum_{\substack{j,k \geq 0 \\ \ell \geq 1}} |F|^{-1/(q'r)} 2^{(1 + \epsilon)(j + k + 2\ell)/r} \Big(\sum_{T \in \T_{j,k}}
\big\|\indic{E} \sum_{P \in T} \<f,\phi_P\>\phi_P a_P\big\|_{L^{r'}(\rea \setminus 2^{\ell-1}I_T)}^{r'}\Big)^{1/r'}
\]

Applying Proposition \ref{treeestimate} with the energy and density bounds for trees $T \in \T_{j,k}$, we see that
\begin{align*}
S_2 &\lc \sum_{\substack{j,k \geq 0 \\ \ell \geq 1}} |F|^{-1/(q'r)} 2^{(1 + \epsilon)(j + k + 2\ell)/r} 2^{-\ell(N-10)} 2^{-(j+k)/2}|F|^{1/2}2^{-j/r'} \Big(\sum_{T \in \T_{j,k}} |I_T|\Big)^{1/r'} \\
&\lc \sum_{\substack{j,k \geq 0 \\ \ell \geq 1}} 
2^{(j+k)((1 + \epsilon)(2/r) - 1)/2} 2^{-\ell(N-14)} |F|^{1/2 - 1/(q'r)}\,.
\end{align*}
Choosing $\epsilon$ small enough and $q$ large enough so that
\[(1 + \epsilon)\frac 2r - 1 < 0 \text{ and } \frac 12 - \frac{1}{q'r}
< \frac 1p\]
 we have $S_2 \lc|F|^{1/p}.$ We similarly obtain $S_1 \lc |F|^{1/p},$ thus giving (\ref{L1except}).

We will finish by proving (\ref{L1except}) for $|F| \leq 1$. Here, we
let 
\[G = \{x:\ma[\indic{F}](x) > C''|F|\}\] where $\ma$ is the Hardy-Littlewood maximal operator and $C''$ is chosen large enough so that the weak-type 1-1 estimate for $\ma$ guarantees $|G| \leq 1/4$.

By Proposition \ref{tileslargemaxprop} and the fact that $p \geq r'$, it will remain to show that
\begin{equation} \label{tilesnotlargemax}
\Big\|\indic{E} \sum_{P \in \P'} \<f,\phi_P\>\phi_P a_P \Big\|_{L^1(\rea \setminus G)} \lc |F|^{1/p.}
\end{equation}
where $\P' = \{P \in \P : I \not\subset G\}.$

Finally, it follows from Proposition \ref{tilessmallmaxprop}
that the energy of $\P'$ is bounded above by $C|F|$.
Repeatedly applying Propositions \ref{energyincrementprop} and \ref{densityincrementprop}
we write $\P'$ as the disjoint union
\[
\P' = \bigcup_{j \geq 0} \bigcup_{T \in \T_j} T
\]
where each $\T_j$ is a collection of trees $T$ each of which have energy bounded by $C_\circ 2^{-j/2}|F|^{1/2}$, density bounded by $C_\circ 2^{-{j/r'}},$ and satisfy
\[
\sum_{T \in \T_j} |I_T| \lc 2^j.
\]
%\footnote{\tt There is an implicit constant here, which is
%independent of $j$.?}
We then have
\[
\Big\|\indic{E} \sum_{P \in \P'} \<f,\phi_P\>\phi_P a_P \Big\|_{L^1} \leq \sum_{j \geq 0} \sum_{T \in \T_j} \Big\|\indic{E} \sum_{P \in T} \<f,\phi_P\>\phi_P a_P\Big\|_{L^1}.
\]
Applying Proposition \ref{treeestimate}, we see that the right side above is
\[
\lc \sum_{j \geq 0} \sum_{T \in \T_j} \min(2^{-j/2}|F|^{1/2},|F|) 2^{-j/r'}|I_T| \lc \sum_{j \geq 0} 2^{j/r} \min(2^{-j/2}|F|^{1/2},|F|).
\]
Summing over $j$, we see that the right side above is $\lc |F|^{1/r'}.$ This finishes the proof, since $p \geq r'.$

\begin{appendices}
\section{Transference}\label{transference}
In this section we show how to obtain Theorem \ref{maintheoremper} from
Theorem \ref{maintheorem}. We employ arguments from chapter  VII in the monograph by Stein and Weiss
\cite{steinweiss} in their proof of De Leeuw's transference result
(\cite{leeuw}).  The following limiting relation is used:
%The transference idea is contained in a limit identity that can be
%found in the monograph of Stein and Weiss; we  restate it as
\begin{lemma} \label{stwe} (\cite{steinweiss}, p. 261).
Let $m\in L^\infty(\bbR^d)$  with the property that every $k\in
\bbZ^d$  is a
Lebesgue point  of $m$. Define a convolution operator $T$ on $L^2(\Bbb R^d)$
by the Fourier transform identity $\widehat {Tf}(\xi)=m(\xi) \widehat f(\xi)$
and a convolution
operator on $L^2(\bbT^d)$ by the relation $[\widetilde T f]_k\!\hat{}  =
m(k)  f_k\!\hat{}$
for the Fourier coefficients. Let, for $x\in \bbR$,
\[w(x)= e^{-\pi x^2} \text{ and } w_R(x)= R^{-d} w(x/R).\]
Then, for all trigonometric  polynomials $P$ and $Q$ (extended as
$1$-periodic functions in every variable) we have
\Be {steinweisslimit} \int_{[0,1]^d}  \widetilde T [P](x) Q(x)\, dx=
\lim_{R\to \infty}\int_{\bbR^d}  T[Pw_{R/p'}](x) Q(x) w_{R/p}(x)\, dx.
\Ee
\end{lemma}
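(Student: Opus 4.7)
The plan is to reduce to pure exponentials and compute directly. Expand $P(x) = \sum_{k} a_k e^{2\pi i k\cdot x}$ and $Q(x) = \sum_{\ell} b_\ell e^{2\pi i \ell\cdot x}$ as finite sums; by bilinearity of both sides in the pair $(P,Q)$, it suffices to verify the identity when $P(x) = e^{2\pi i k\cdot x}$ and $Q(x) = e^{2\pi i \ell \cdot x}$ for single integer frequencies $k,\ell \in \bbZ^d$. The LHS is then immediate from $\widetilde T[P] = m(k) P$ and orthogonality on $[0,1]^d$, yielding $m(k)$ if $k+\ell = 0$ and $0$ otherwise.

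For the RHS, apply the Parseval-type identity $\int_{\bbR^d}(Tf)\,g\,dx = \int m(\xi) \widehat{f}(\xi) \widehat{g}(-\xi)\,d\xi$ with $f = e^{2\pi i k\cdot x} w_{R/p'}$ and $g = e^{2\pi i \ell \cdot x} w_{R/p}$. Since multiplication by $e^{2\pi i k\cdot x}$ translates the Fourier transform by $k$, and the Gaussians $\widehat{w_s}(\xi) = e^{-\pi s^2 |\xi|^2}$ are even, the integrand becomes
\[
\int_{\bbR^d} m(\xi)\,\widehat{w_{R/p'}}(\xi-k)\,\widehat{w_{R/p}}(\xi+\ell)\,d\xi.
\]
The two Gaussian factors are peaked at $\xi = k$ and $\xi = -\ell$ with widths of order $1/R$. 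When $k+\ell \neq 0$, the centers are distinct integer lattice points separated by distance at least $1$; the product of two narrow Gaussians centered so far apart has $L^1$ norm bounded by $e^{-cR^2}$, and the bound $\|m\|_\infty<\infty$ forces this off-diagonal contribution to vanish as $R\to\infty$.

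For the diagonal case $k+\ell=0$, the substitution $\eta = \xi - k$ recasts the integral as
\[
\int_{\bbR^d} m(k+\eta)\,\widehat{w_{R/p'}}(\eta)\,\widehat{w_{R/p}}(\eta)\,d\eta.
\]
Here the product weight is a positive approximate identity concentrated at $\eta = 0$ (with the prescribed scaling designed so that its total mass tends to $1$ in the limit). The Lebesgue point hypothesis on $m$ at the integer point $k$ allows the replacement of $m(k+\eta)$ by $m(k)$, producing the desired contribution $m(k)$, which exactly matches the LHS.

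The main obstacle is the diagonal limit: one must convert the pointwise Lebesgue point statement into an $L^1$-averaged statement against the narrowing product Gaussian, uniformly in $R$. This is handled by a standard $\varepsilon$-split argument, decomposing the integral into a small neighbourhood of $0$ (on which the Lebesgue point property $\limsup_{r\to 0} r^{-d}\int_{|\eta|\le r} |m(k+\eta) - m(k)|\,d\eta = 0$ controls the oscillation) and its complement (on which the Gaussian decay combined with $\|m\|_\infty < \infty$ renders the tail negligible).
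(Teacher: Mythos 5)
The paper does not reprove this lemma --- it is cited to Stein--Weiss --- so the only thing to assess is the internal correctness of your argument. Your overall plan (reduce by bilinearity to exponentials, pass to frequency by Parseval, use off-diagonal Gaussian decay, and invoke the Lebesgue point hypothesis on the diagonal) is the standard and correct route. However, there is a genuine gap at the load-bearing diagonal step.

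You read $w_{R/p'}$ as the $L^1$-normalized $(R/p')$-dilate, so that $\widehat{w_{R/p'}}(\xi)=e^{-\pi(R/p')^2|\xi|^2}$, and you assert without computation that the product weight $\widehat{w_{R/p'}}\,\widehat{w_{R/p}}$ ``has total mass tending to $1$.'' But a direct computation gives
\[
\int_{\bbR^d} e^{-\pi(R/p')^2|\eta|^2}\,e^{-\pi(R/p)^2|\eta|^2}\,d\eta
=\bigl(R^2\bigl(p'^{-2}+p^{-2}\bigr)\bigr)^{-d/2}\;\longrightarrow\;0,
\]
so under your reading the diagonal contribution is $0$, not $m(k)$. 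In fact with that reading the identity is already false in the simplest case $m\equiv 1$, $P=Q=1$, $d=1$: the left side equals $1$, whereas the right side is $\int(w_{R/2})^2\,dx\to 0$. The symbols $w_{R/p'}$, $w_{R/p}$ must be read as the \emph{powers} $(w_R)^{1/p'}$, $(w_R)^{1/p}$ of the unit-mass Gaussian $w_R$; then the pointwise product is exactly $w_R$ with $\int w_R=1$, and on the Fourier side one finds $\widehat{(w_R)^{1/p'}}(\xi)=R^{d/p}(p')^{d/2}e^{-\pi p'R^2|\xi|^2}$, so that
\[
\int_{\bbR^d}\widehat{(w_R)^{1/p'}}(\eta)\,\widehat{(w_R)^{1/p}}(\eta)\,d\eta
=\Bigl(\tfrac{pp'}{p+p'}\Bigr)^{d/2}=1,
\]
using the conjugate-exponent identity $p+p'=pp'$. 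With that correction your off-diagonal estimate and the Lebesgue-point $\varepsilon$-split both go through exactly as you describe; but as written, the central ``total mass tends to $1$'' claim is false, and you would have derived $0=m(k)$.
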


We also need  the following elementary fact  on Lorentz spaces.
\begin{lemma}\label{Lorentzmultiplicationbd} Let $f\in L^{p,q}(\bbT^d)$ and extend $f$ to a function
  $f^\per$ on $\bbR^d$ which is $1$-periodic in every variable.
Let $L>d/p$ and let  $w$ be a measurable function satisfying $|w(x)|\le (1+|x|)^{-L}$. Let $w_R(x)= R^{-d}w(R^{-1}x)$.
Then
\[ \sup_{R\ge 1}\big\|f^\per w_R\big\|_{L^{p,q}(\bbR^d)} \le C_{p,q}
\|f\|_{L^{p,q}(\bbT^d)}
\]
\end{lemma}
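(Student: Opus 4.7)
The plan is to reduce the Lorentz bound to a uniform strong $L^s$ bound for a range of exponents around $p$, and then upgrade via real interpolation.

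\emph{Step 1 (Strong $L^s$ bound).} I will show that for any $s \ge 1$ with $Ls > d$,
\[
\|f^\per w_R\|_{L^s(\bbR^d)} \lc \|f\|_{L^s(\bbT^d)}
\]
uniformly in $R \ge 1$. Partition $\bbR^d$ into translated unit cubes $E_k = [0,1)^d + k$ for $k \in \bbZ^d$. Since $f^\per$ is $1$-periodic in each variable, $\int_{E_k}|f^\per|^s = \|f\|_{L^s(\bbT^d)}^s$ for every $k$. Using $|x - k| \le \sqrt d$ for $x \in E_k$, one obtains the pointwise bound $|w_R(x)| \lc R^{-d}(1+|k|/R)^{-L}$ on $E_k$ (handling the finitely many cubes with $|k| \lc 1$ by crude domination by $R^{-d}$). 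Therefore
\[
\|f^\per w_R\|_{L^s(\bbR^d)}^s \lc \|f\|_{L^s(\bbT^d)}^s \cdot R^{-ds}\sum_{k\in\bbZ^d}(1+|k|/R)^{-Ls}.
\]
By Riemann-sum comparison the last sum is $\lc R^d \int_{\bbR^d}(1+|y|)^{-Ls}\,dy \lc R^d$ whenever $Ls>d$, giving the bound $\lc \|f\|_{L^s(\bbT^d)}^s\, R^{d(1-s)}$, which is $\le C_s\|f\|_{L^s(\bbT^d)}^s$ for $s \ge 1$ and $R \ge 1$.

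\emph{Step 2 (Interpolation to Lorentz spaces).} Because $L > d/p$, one can choose exponents $s_0,s_1$ with
\[
\max\{1,d/L\} \le s_0 < p < s_1 < \infty \quad\text{and}\quad L s_i > d \text{ for } i=0,1.
\]
By Step 1 the sublinear map $T_R\colon g \mapsto g^\per w_R$ is bounded from $L^{s_i}(\bbT^d)$ to $L^{s_i}(\bbR^d)$, with operator norm uniform in $R \ge 1$, for both $i=0,1$. The Marcinkiewicz interpolation theorem in its Lorentz-space formulation (valid for sublinear operators between arbitrary $\sigma$-finite measure spaces) then yields the uniform bound
\[
\sup_{R\ge 1}\|T_R\|_{L^{p,q}(\bbT^d)\to L^{p,q}(\bbR^d)} \le C_{p,q},
\]
as desired.

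\emph{Main obstacle.} The argument is essentially routine; the only real point requiring care is that the $L^s$ bound be \emph{uniform} in $R \ge 1$, which is what forces the restriction to $R \ge 1$ (so that $R^{d(1-s)} \le 1$ for $s \ge 1$) and explains the role of the hypothesis $L > d/p$ (which, together with $p$ exceeding $d/L$, guarantees the existence of admissible $s_0<p$ for the lower interpolation endpoint). Once the $L^s$ decoupling between the periodicity of $f^\per$ and the decay of $w_R$ is made, everything else is standard.
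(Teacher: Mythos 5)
Your argument is correct and follows essentially the same route as the paper: both proofs first establish the strong $L^s$ bound uniformly in $R \ge 1$ for $s \ge 1$ with $Ls > d$ by tiling $\bbR^d$ with unit cubes, exploiting periodicity on each cube and summing the decay of $w_R$, and then pass to the Lorentz estimate by real interpolation between two such $L^{s_0},L^{s_1}$ with $s_0 < p < s_1$. The paper phrases the cube estimate with a dyadic reduction $N\le R<N+1$ and states only the $p=q$ case before interpolating, but these are cosmetic differences; your write-up is a faithful, slightly more explicit version of the same proof.
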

\begin{proof} We first assume $p=q$.
Let $Q_0:=[-\frac12,\frac 12]^d$. If $N\in \bbN$ and $N\le R\le N+1$
 then $w_R(x) \approx N^{-d}(1+|n|/N)^{-Lp}$ for $x\in n+Q_0$ and by the
 periodicity we can estimate $\|f^\per w_R\|_p^p$ by $C \sum_{n\in \bbZ^d} N^{-d}(1+|n|/N)^{-Lp}
\|f\|_{L^p(Q_0)}^p$ which is $ \lc  \|f\|_{L^p(Q_0)}^p$ since
$Lp>d$.
For fixed $L$ we apply real interpolation in the range $p<L/d$ and
obtain the Lorentz space result.
\end{proof}

\begin{proof}[Proof that Theorem \ref{maintheorem} implies
 Theorem \ref{maintheoremper}]
We shall assume that $1<p<\infty$, $1\le q_1<\infty$,
and  $q_1\le q_2\le\infty$ and prove that the
$L^{p,q_1}(\bbR) \to L^{p,q_2}(\bbR; V^r)$ for the partial sum operator $\cS$ on the real line implies the  corresponding  result on the torus, i.e.
\Be{lorentztorus}
\Big\| \sup_K \sup_{0\le n_1 \le \dots \le n_K}
\Big(\sum_{i=1}^{K-1}|S_{n_{i+1}}f - S_{n_i} f
|^r\Big)^{1/r}\Big\|_{L^{p,q_2}(\bbT)}\lc \|f\|_{L^{p,q_2}(\bbT)}
\Ee

By two applications of the monotone convergence theorem it suffices to show for fixed $K\in \bbN$ with $K\ge 2$, and fixed  $M\in \bbN$ that
\Be{lorentztorusKM}
\Big\|  \sup_{0\le n_1 \le \dots \le n_K\le M}
\Big(\sum_{i=1}^{K-1}|S_{n_{i+1}}f - S_{n_i} f
|^r\Big)^{1/r}\Big\|_{L^{p,q_2}(\bbT)}\le C \|f\|_{L^{p,q_1}(\bbT)}
 \Ee
where $C$ does not depend on $M$ and $K$.

For $\vec n=(n_1,\dots, n_K)\in \bbN_0^K$,   $1\le i\le K-1$
define   $\cT f(x,\vec n,i)=  S_{n_{i+1}}f(x) - S_{n_i} f(x)$ if $n_1\le \dots
\le n_K$ and $\cT f(x,\vec n,i)= 0$ otherwise.
Then the inequality \eqref{lorentztorusKM} just says that $\cT$ is bounded
from $L^{p,q_1}$ to $L^{p,q_2}(\ell^\infty(\ell^r))$ where the $\ell^\infty$ norm is taken for functions on the finite set $\{1,\dots, M\}^K$ and
the $\ell^r$ norm is for functions on $\{1,\dots, K-1\}$.
By duality \eqref{lorentztorusKM} follows from the
$L^{p', q_2'}(\ell^1(\ell^{r'}))\to L^{p', q_1'}$
 inequality for the adjoint operator $\cT^*$, i.e.
from the inequality
\begin{multline}\label{lorentztorusKMadj}
\Big| \int_0^1   \sum_{0\le n_1 \le \dots \le n_K\le M}  \sum_{i=1}^{K-1}
\big[S_{n_{i+1}}f_{\vec n, i}(x) - S_{n_i} f_{\vec n, i}(x)
 \big] Q(x)\,
dx \Big|\\
\lc
\Big\|\sum_{\vec n}\Big(\sum_{i=1}^{K-1} |f_{\vec n, i}|^{r'}\Big)^{1/r'}
\Big\|_{L^{p',q_2'}(\bbT)} \,\|Q\|_{L^{p,q_1}(\bbT)}.
\end{multline}

We fix an irrational number $\la$ in $(0,1)$, say $\la=1/\sqrt 2$.
We then define ``partial sum operators'' for Fourier integrals
by $\widehat {\fS_t f} (\xi)= \chi_{[-\la,\la]}(\xi/t) \widehat f(\xi)$ and
a corresponding partial sum operator $\widetilde \fS_t$
 on Fourier series by letting  the $k$th Fourier coefficient of
$\widetilde \fS_t f$ be  equal to $\chi_{[-\la,\la]}(k/t)
\widehat f_k$.
%f_k\!\!\sphat\,\,$.
%f_k\!\hat{}$.
We define a function $\nu: \bbN_0\to \bbN_0$ as follows: set
$\nu(0)=0$ and
 for $n>0$ let $\nu(n)$ be the smallest positive integer $\nu$
for which $\la\nu>n$.
Notice that then
\Be{SversusfS}
S_{n_{i+1}}f_{\vec n, i}(x) -
S_{n_{i}}f_{\vec n, i}(x) =
\widetilde {\fS}_{\nu(n_{i+1})} f_{\vec n, i} - \widetilde{\fS}_{\nu(n_{i})} f_{\vec n, i} .
\Ee

%Thus by \eqref{steinweisslimit} (applied with $d=1$)
%\Be{weierstrass}
%\int_0^1 \widetilde {\fS}_\nu [P] (x) Q(x) \,dx= \lim_{R\to\infty}
%$\int_{\bbR} \fS_\nu[P w_{R/p'}] (x) Q(x) w_{R/p} \,dx
%\Ee

%where $P$ and $Q$ are trigonometric polynomials on $\bbT$ (with
%$1$-periodic extension to $\bbR$) and where
%$w_R(x)= R^{-1}e^{-\pi (x/R)^2}$.

Now in order to prove  \eqref{lorentztorusKMadj} it  clearly
suffices to
verify it for the case that
the function $f_{\vec n,i}$ and $Q$ are trigonometric polynomials.
The multipliers corresponding to $\fS_t$ are continuous at every
integer.  Thus by \eqref{steinweisslimit} (applied with $d=1$) and
\eqref{SversusfS}
we see that
\eqref{lorentztorusKMadj} is implied by
\begin{multline}\label{lorentztorusKMadjw}
\Big| \sum_{0\le n_1 \le \dots \le n_K\le M}  \sum_{i=1}^{K-1}
\int_0^1
\big[\fS_{\nu(n_{i+1})}(f_{\vec n, i}w_{R,p'}) -
\fS_{\nu(n_{i})}(f_{\vec n, i}w_{R,p'})\big]
Q w_{R/p} \,dx\Big|
\\
\lc
\Big\|\sum_{\vec n}\Big(\sum_{i=1}^{K-1} |f_{\vec n, i}|^{r'}\Big)^{1/r'}
\Big\|_{L^{p',q_2'}(\bbT)} \,\|Q\|_{L^{p,q_1}(\bbT)}.
\end{multline}
for sufficiently large $R$.

Now notice that $\fS_t= \cS_{\la t}- \cS_{-\la t}$ so that the assumed
$L^{p,q_1}(\bbR)\to L^{p,q_2}(V^r,\bbR)$ boundedness
 for the family $\{\cS_t\}$
implies the analogous statement  for the family $\{\fS_t\}$. We run
the duality argument in the reverse direction
 (now for functions defined on $\bbR$)
and  deduce
\begin{multline*}
\label{lorentzdualR}
\Big| \int_{\bbR}   \sum_{0\le n_1 \le \dots \le n_K\le M}  \sum_{i=1}^{K-1}
\Big(\fS_{\nu(n_{i+1})}[f_{\vec n, i} w_{R/p'}] - \fS_{\nu(n_i)}
[f_{\vec n, i} w_{R/p'}]
 \Big) Q w_{R/p} \,
dx \Big|\\
\lc
\Big\|\sum_{\vec n}\Big(\sum_{i=1}^{K-1} |f_{\vec n,
  i}|^{r'}\Big)^{1/r'} w_{R/p'}
\Big\|_{L^{p',q_2'}(\bbR)} \,\|Q w_{R/p}\|_{L^{p,q_1}(\bbR)}.
\end{multline*}
By Lemma \ref{Lorentzmultiplicationbd} the right hand side of this
inequality is for $R\ge \max\{p,p'\}$  bounded by
the right hand side of
\eqref{lorentztorusKMadjw}. Thus we have established inequality
\eqref{lorentztorusKMadjw} and this concludes the proof.
\end{proof}

\section{A variational Menshov-Paley-Zygmund theorem} \label{vmpzsection}

For $\xi,x \in \rea$ let
\[\CC[f](\xi,x) = \int_{-\infty}^x e^{-2\pi i \xi x'}f(x')\ dx'.\] Menshov, Paley, and Zygmund extended the Hausdorff-Young inequality by proving a version of the bound
\begin{equation} \label{mpztheorem}
\|\CC[f]\|_{L^{p'}_\xi(L^{\infty}_x)} \leq C_{p} \|f\|_{L^{p}(\rea)}
\end{equation}
for $1 \leq p < 2.$ The bound at $p=2$ is a special case of the much more difficult maximal inequality for the partial sum operator
of
%Theorem \ref{carlesonsthm}
proved by
Carleson and Hunt.
Interpolating  Theorem \ref{maintheorem} at $p=2$ with a trivial estimate at $p=1$, one obtains the following stregthened version of (\ref{mpztheorem})
\begin{equation} \label{vmpztheorem}
\|\CC[f]\|_{L^{p'}_\xi(V^r_x)} \leq C_{p,r} \|f\|_{L^{p}(\rea)}
\end{equation}
for $1 \leq p \leq 2$ and $r > p.$ It follows from the same arguments given in Section \ref{counterexamplesection} that this range of $r$ is the best possible. Our interest in this variational bound primarily stems from the fact, which will be proven in Appendix \ref{vliegroupsection}, that it may be transferred, when $r < 2,$ to give a corresponding estimate for certain nonlinear Fourier summation operators. The purpose of the present appendix is to give an easier alternate proof of (\ref{vmpztheorem}) when $p < 2$.
Note that Pisier and Xu
\cite{pisier-xu}
 have proved closely related $L^p\to L^2(V^p)$ inequalities for
 orthonormal systems of (not necessarily bounded) 
 functions on an arbitrary measure space.

A now-famous lemma of Christ and Kiselev \cite{christ01mfa} asserts that if an integral operator
$$ Tf(x) = \int_\R K(x,y) f(y)\ dy$$
is bounded from $L^p(\R)$ to $L^q(X)$ for some measure space $X$ and some $q > p$, thus
$$ \| Tf \|_{L^q(X)} \leq A \| f \|_{L^p(\R)},$$
then automatically the maximal function
$$ T_* f(x) = \sup_{N\in\R}\Big|\int_{y < N} K(x,y) f(y)\ dy\Big|$$
is also bounded from $L^p(\R)$ to $L^q(X)$, with a slightly larger constant.  Another way to phrase this is
as follows.  If we define the partial integrals
$$ T_\leq f(x,N) = \int_{y < N} K(x,y) f(y)\ dy$$
then we have
\begin{equation} \label{cklemmabound}
\| T_\leq f \|_{L^q_x(L^\infty_N)} \leq C_{p,q} A \| f \|_{L^p(\R)}.
\end{equation}
As was observed by Christ and Kiselev, this may be applied in conjunction with the Hausdorff-Young inequality to obtain (\ref{mpztheorem}) for $p < 2.$

The $L^\infty_N$ norm can also be interpreted as the $V^\infty_N$ norm, and we will now see that $V^\infty$ can be replaced by $V^r$ for $r > p,$ thus giving (\ref{vmpztheorem}) from the Hausdorff-Young inequality.
\begin{lemma}  Under the same assumptions, we have
$$ \| T_\leq f \|_{L^q_x(V^r_N)} \leq C_{p,q,r} A \| f \|_{L^p(\R)}$$
for any $r > p$.
\end{lemma}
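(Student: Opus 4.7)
The plan is to adapt the classical Christ--Kiselev dyadic decomposition argument, replacing the $L^\infty_N$ conclusion with an $L^r_N$ conclusion that in turn dominates the $V^r_N$ norm, once differences of partial integrals are re-expressed as integrals over disjoint intervals. After normalizing $\|f\|_{L^p} = 1$ I will build the $|f|^p$-adapted dyadic grid: for each $k \ge 0$, a partition $\cP_k$ of $\R$ into $2^k$ consecutive intervals of equal $|f|^p$-mass $2^{-k}$, with $\cP_{k+1}$ obtained from $\cP_k$ by bisecting each interval into two equal-mass halves. The absence of atoms for $|f|^p\,dx$ makes this straightforward; members of $\bigcup_k \cP_k$ will be the ``dyadic'' intervals.

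By definition of the variation norm, it suffices to bound, uniformly in $L$ and in any increasing sequence $N_0 < \cdots < N_L$, the $L^q_x$-norm of $\big(\sum_{\ell=1}^{L} |T(f\indic{J_\ell})|^r\big)^{1/r}$, where $J_\ell := [N_{\ell-1},N_\ell)$, since $T_\le f(x,N_\ell) - T_\le f(x,N_{\ell-1}) = T(f\indic{J_\ell})(x)$. For each $\ell$ I will decompose $J_\ell$ into its maximal dyadic subintervals. The key combinatorial fact to verify is that at most two such subintervals occur at any fixed level $k$: if there were three, the dyadic sibling of the middle one would be forced to sit inside the interval $J_\ell$, hence so would the sibling's parent, contradicting maximality. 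This yields $f\indic{J_\ell} = \sum_{k \ge 0}(f\indic{I^\ell_{k,L}} + f\indic{I^\ell_{k,R}})$ with each $I^\ell_{k,*}$ either a single level-$k$ dyadic interval or empty.

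Minkowski's inequality in $\ell^r_\ell$ then pulls $\sum_k$ outside, reducing matters to bounding, for each $k$, the $L^q$-norm of $(\sum_m |Tg_{k,m}|^r)^{1/r}$, where the family $\{g_{k,m}\}_m := \{f\indic{I^\ell_{k,L}}, f\indic{I^\ell_{k,R}}\}_\ell$ consists of disjointly supported functions, each supported in a distinct level-$k$ dyadic interval; there are at most $2^{k+1}$ of them, each with $\|g_{k,m}\|_{L^p} \le 2^{-k/p}$. For this level-$k$ estimate I will use Minkowski's integral inequality when $q \ge r$, and the pointwise embedding $\ell^q \subseteq \ell^r$ when $q < r$, combined with the hypothesis $\|Tg\|_{L^q} \le A\|g\|_{L^p}$, to obtain a bound $\lc A\cdot 2^{k(\max\{1/r,1/q\}-1/p)}$. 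Since $r>p$ and $q>p$, the exponent in $k$ is strictly negative and the geometric series sums to a constant $C_{p,q,r}A$, independent of $L$ and of the $N_\ell$.

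The only delicate point I anticipate is the two-per-level claim for maximal dyadic subintervals of $J_\ell$, but this is elementary given that $J_\ell$ is itself an interval. The rest is a routine packaging of the Christ--Kiselev summation with a standard Minkowski or embedding step in the vector-valued direction.
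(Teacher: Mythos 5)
Your proposal is correct, but it takes a genuinely different route from the paper's own proof. The paper deliberately avoids re-running the Christ--Kiselev dyadic machinery and instead proceeds by a bootstrap: assume an a priori bound $\|T_\le f\|_{L^q_x(V^r_N)} \le BA\|f\|_{L^p}$, split $f$ at a point $N_0$ that halves $\int|f|^p$, observe that $T_\le f(x,N)$ agrees with $T_\le f_-(x,N)$ on $N\le N_0$ and differs from $T_\le f_+(x,N)$ by the constant $Tf_-(x)$ on $N>N_0$, absorb the one straddling term into $O(T_*f)$ via the classical Christ--Kiselev maximal bound, interchange $\ell^r$ and $L^q$ using (WLOG) $r<q$, and conclude that the best $B$ satisfies $B\le 2^{1/r-1/p}B+C_{p,q,r}$, which closes the induction precisely because $r>p$. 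You instead re-derive everything from scratch via the $|f|^p$-adapted dyadic grid: decompose each increment interval $J_\ell=[N_{\ell-1},N_\ell)$ into maximal dyadic pieces (at most two per generation --- your sibling argument for this is correct), pull $\sum_k$ out by Minkowski in $\ell^r_\ell$, and bound the level-$k$ piece by exploiting disjoint supports, using $L^{q/r}$ triangle inequality when $q\ge r$ and the $\ell^q\hookrightarrow\ell^r$ embedding when $q<r$, giving a geometric series in $k$ with ratio $2^{\max(1/r,1/q)-1/p}<1$. Your approach is more self-contained (it does not even invoke the C--K maximal theorem as a black box) and handles both orderings of $q$ and $r$ directly, whereas the paper's bootstrap is shorter once the maximal theorem is taken for granted.

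One phrasing issue you should fix in the writeup: the reduction ``it suffices to bound, uniformly in $L$ and in any increasing sequence $N_0<\cdots<N_L$, the $L^q_x$-norm of $(\sum_\ell|T(f\indic{J_\ell})|^r)^{1/r}$'' is not literally sufficient, because the supremum defining $V^r_N$ sits \emph{inside} the $L^q_x$-norm and the optimal sequence may depend on $x$. Your estimate in fact repairs this automatically: the dyadic decomposition, the Minkowski step, and the enlargement of the level-$k$ family $\{g_{k,m}\}_m$ to the full partition $\cP_k$ are all valid pointwise in $x$, yielding the partition-independent pointwise bound
$$\Big(\sum_{\ell}|T(f\indic{J_\ell})(x)|^r\Big)^{1/r}\ \le\ 2\sum_{k\ge 0}\Big(\sum_{I\in\cP_k}|T(f\indic{I})(x)|^r\Big)^{1/r},$$
so $\|T_\le f(x,\cdot)\|_{V^r_N}$ is dominated by the right-hand side, whose $L^q_x$-norm is then controlled by your level-$k$ estimate. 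Taking the $L^q$-norm only after the supremum has been absorbed into this partition-free expression, rather than for each fixed partition, is the small reordering that makes the reduction rigorous.
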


\begin{proof}
This follows by an adaption of the argument by Christ and Kiselev,
or by the following argument.
Without loss of generality we may take $r < q$, in particular $r < \infty$.
We use a bootstrap argument.  Let us make the \emph{a priori} assumption that
\begin{equation}\label{pre-bootstrap}
 \| T_\leq f \|_{L^q_x(V^r_N)} \leq BA \| f \|_{L^p(\R)}
\end{equation}
for \emph{some} constant $0 < B < \infty$; this can be accomplished for instance by truncating the kernel $K$
appropriately.  We will show that this a priori bound automatically implies the bound
\begin{equation}\label{post-bootstrap}
 \| T_\leq f \|_{L^q_x(V^r_N)} \leq (2^{1/r - 1/p} BA + C_{p,q,r}A) \| f \|_{L^p(\R)}
\end{equation}
for some $C_{p,q,r} > 0$.  This implies that the best bound $B$ in the above inequality will necessarily
obey the inequality
$$ B \leq 2^{1/r - 1/p} B + C_{p,q,r};$$
since $r > p$, this implies $B \leq C'_{p,q,r}$ for some finite $C'_{p,q,r}$, and the claim follows.

It remains to deduce \eqref{post-bootstrap} from \eqref{pre-bootstrap}.  Fix $f$; we may normalize
$\|f\|_{L^p(\R)} = 1$.  We find a partition point $N_0$ in the
real line which halves the $L^p$ norm of $f$:
$$ \int_{-\infty}^{N_0} |f(y)|^p\ dy = \int_{N_0}^{+\infty} |f(y)|^p\ dy = \frac{1}{2}.$$
Write $f_-(y) = f(y) \indic{(-\infty,N_0]}(y)$ and $f_+(y) = f(y) \indic{[N_0,+\infty)}(y)$, thus
$\|f_-\|_{L^p(\R)} = 2^{-1/p}$ and $\|f_+\|_{L^p(\R)} = 2^{-1/p}$.  We observe that
$$ T_\leq f(x,N) =
\left\{ \begin{array}{ll}
T_\leq f_-(x,N) & \hbox{ when } N \leq N_0\\
Tf_-(x) + T^\leq f_+(x,N) & \hbox{ when } N > N_0
\end{array}\right.
$$
Furthermore, $T_\leq f_-(x,\cdot)$ and $T_\leq f_+(x,\cdot)$ are bounded in $L^\infty$ norm by $O(T_* f(x))$.  Thus we have
$$ \| T_\leq f(x,\cdot) \|_{V^r_N} \leq (\| T_\leq f_-(x,\cdot) \|_{V^r_N}^r + \| T_\leq f_+(x,\cdot) \|_{V^r_N}^r)^{1/r}
+ O( T_* f(x) ).$$
(The $O(T_* f(x))$ error comes because the partition used to define $\| T_\leq f(x,\cdot) \|_{V^r_N}$ may have
one interval which straddles $N_0$).  We take $L^q$ norms of both sides to obtain
$$ \| T_\leq f \|_{L^q_x V^r_N} \leq \| (\| T_\leq f_-(x,\cdot) \|_{V^r_N}^r + \| T_\leq f_-(x,\cdot) \|_{V^r_N}^r)^{1/r} \|_{L^q_x}
+ O( \| T_* f \|_{L^q_x} ).$$
The error term is at most $C_{p,q} A$ by the ordinary Christ-Kiselev lemma.  For the main term, we take
advantage of the fact that $r < q$ to interchange the $l^r$ and $L^q$ norms, thus obtaining
$$ \| T_\leq f \|_{L^q_x V^r_N} \leq (\| T_\leq  f_- \|_{L^q_x V^r_N}^r + \| T_\leq f_+ \|_{L^q_x V^r_N}^r)^{1/r}
+ O( C_{p,q} A ).$$
By inductive hypothesis we thus have
$$ \| T_\leq f \|_{L^q_x V^r_N} \leq ((2^{-1/p} BA)^r + (2^{-1/p} BA)^r)^{1/r}
+ O( C_{p,q} A ),$$
and the claim follows.

\end{proof}

\section{Variation norms on Lie groups} \label{vliegroupsection}
In this appendix, we will show that certain $r$-variation norms for curves on Lie groups can be controlled by
the corresponding variation norms of their ``traces'' on the Lie algebra as long as $r < 2$.  This follows from work of Terry Lyons \cite{lyons94de}; we
present a self contained proof in this appendix.
Combining this fact with the variational Menshov-Paley-Zygmund theorem of
Appendix \ref{vmpzsection}, we rederive the Christ-Kiselev theorem on the
pointwise convergence
of the nonlinear Fourier summation operator for $L^p(\R)$ functions, $1 \leq p < 2$.

Let $G$ be a connected finite-dimensional Lie group with Lie
algebra ${\mathfrak g}$.  We give ${\mathfrak g}$ any norm $\|\cdot\|_{\mathfrak g}$, and push forward this norm using left multiplication by the Lie group
to define a norm $\| x \|_{T_g G} = \| g^{-1}x \|_{\mathfrak g}$ on each tangent space $T_g G$ of
the group. Observe that this norm structure is preserved under left
 group multiplication.

We can now define the \emph{length} $|\gamma|$ of a continuously differentiable
path $\gamma: [a,b] \to G$ by the usual formula
$$ |\gamma| = \int_a^b \| \gamma'(t) \|_{T_{\gamma(t)} G}\ dt.$$
Observe that this notion of length is invariant under left group multiplication, and also under reparameterization of the
path $\gamma$.

From this notion of length, we can define a metric $d(g,g')$ on $G$
as
$$ d(g,g') = \inf_{\gamma: \gamma(a) = g, \gamma(b) = g'} |\gamma|$$
where $\gamma$ ranges over all differentiable paths from $g$ to $g'$.
It is easy to see that this does indeed give a metric on $G$.

Integral curves of left invariant vectorfields need not
be geodesic for this metric \cite{pripoae}, but the length of a 
short segment of such an integral curve is within a quadratically small 
error of the distance between the two endpoints. This is the content
of the following lemma:

\begin{lemma}\label{terryfix}  If $x \in {\mathfrak g}$ is such that $\|x\|_{\mathfrak
g} \leq \eps$ for some sufficiently small $\eps$, then $d( 1, \exp(x)
) = \|x\|_{\mathfrak g} + O( \|x\|_{\mathfrak g}^2 )$.
\end{lemma}

\begin{proof} By considering the exponential curve $\gamma: [0,1] \to
G$ defined by $\gamma(t) := \exp( t x )$ we obtain the upper bound $d(
1, \exp(x) ) \leq \|x\|_{\mathfrak g}$.  Now consider any competitor
curve $\tilde \gamma: [0,1] \to G$ from $1$ to $\exp(x)$ which has
shorter length than $\|x\|_{\mathfrak g}$.  We write $\tilde \gamma(t)
= \exp( f(t) )$ for some smooth curve $f: [0,1] \to {\mathfrak g}$
from $1$ to $x$; this is well-defined if $\eps$ is small enough.

There are two cases.  First suppose that $f$ stays inside the ball $\{
y: \|y\|_{\mathfrak g} \leq 2 \|x\|_{\mathfrak g} \}$.  Then from
Taylor expansion we see that

$$ \| \tilde \gamma'(t) \|_{T_{\gamma(t)} G} = (1 + O(
\|x\|_{\mathfrak g} ) ) \| f'(t) \|_{\mathfrak g}$$

and hence

$$ |\tilde \gamma| = (1 + O( \|x\|_{\mathfrak g} ) ) \int_0^1
\|f'(t)\|_{\mathfrak g}\ dt.$$

But from Minkowski's inequality one has $\int_0^1 \|f'(t)\|_{\mathfrak
g}\ dt \geq \|x\|_{\mathfrak g}$, and the claim follows.

Now suppose instead that $f$ leaves this ball.  Let $0 < t_0 < 1$ be
the first time at which this occurs.  Then the above argument gives

$$ |\tilde \gamma| \geq (1 + O( \|x\|_{\mathfrak g} ) ) \int_0^{t_0}
\|f'(t)\|_{\mathfrak g}\ dt.$$

By Minkowski's inequality one has $\int_0^{t_0} \|f'(t)\|_{\mathfrak
g}\ dt \geq 2 \|x\|_{\mathfrak g}$, and this gives a contradiction to
$|\tilde \gamma| \leq \|x\|_{\mathfrak g} \leq \eps$ if $\eps$ is
sufficiently small.  The claim follows.
\end{proof}

Given any continuous path $\gamma: [a,b] \to G$ and $1 \leq r < \infty$,
we define the \emph{$r$-variation} $\| \gamma \|_{V^r}$ of $\gamma$
to be the quantity
$$ \|\gamma \|_{V^r} = \sup_{a = t_0 < t_1 < \ldots < t_n = b}
\Big(\sum_{j=0}^{n-1} d(\gamma(t_{j+1}), \gamma(t_j))^r\Big)^{1/r}$$
where the infimum ranges over all partitions of $[a,b]$ by finitely
many times $a=t_0$, $t_1,$ $\ldots,$ $t_n = b$.  We can extend this to
the $r=\infty$ case in the usual manner as
$$ \|\gamma \|_{V^\infty} = \sup_{a = t_0 < t_1 < \ldots < t_n = b}\,\,
\sup_{0 \leq j \leq n-1} \,d(\gamma(t_{j+1}), \gamma(t_j)),$$
and indeed it is clear that the $V^\infty$ norm of $\gamma$ is simply the
diameter of the range of $\gamma$.  The $V^1$ norm of $\gamma$ is finite
precisely when $\gamma$ is rectifiable, and when $\gamma$ is differentiable
it corresponds exactly with the length $|\gamma|$ of $\gamma$ defined earlier.
It is easy to see the monotonicity property
$$ \| \gamma \|_{V^p} \leq \| \gamma \|_{V^r} \hbox{ whenever } 1 \leq r \leq p \leq \infty$$
and the triangle inequalities
$$
(\| \gamma_1 \|_{V^r}^r + \| \gamma_2 \|_{V^r}^r)^{1/r} \leq
\| \gamma_1 + \gamma_2 \|_{V^r} \leq
\| \gamma_1 \|_{V^r} + \| \gamma_2 \|_{V^r}$$
where $\gamma_1 + \gamma_2$ is the concatenation of $\gamma_1$ and
$\gamma_2$.
A key fact about the $V^r$ norms is that they can be subdivided:

\begin{lemma}\label{subdivide}  Let $\gamma: [a,b] \to G$ be a continuously
differentiable curve with
finite $V^r$ norm.  Then there exists a decomposition $\gamma = \gamma_1 + \gamma_2$ of the curve into two sub-curves such that
$$ \| \gamma_1 \|_{V^r}, \| \gamma_2 \|_{V^r} \leq 2^{-1/r} \| \gamma \|_{V^r}.$$
\end{lemma}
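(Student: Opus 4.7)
My plan is to locate the splitting time by an intermediate value argument applied to a natural ``balance function'' on $[a,b]$. Concretely, I would introduce
\[
F(t) := \|\gamma|_{[a,t]}\|_{V^r}, \qquad G(t) := \|\gamma|_{[t,b]}\|_{V^r}, \qquad V := \|\gamma\|_{V^r},
\]
so that $F(a) = G(b) = 0$ and $F(b) = G(a) = V$, and then look for $t^* \in [a,b]$ at which $F(t^*) = G(t^*)$. Once such a $t^*$ is found, the subadditivity
\[
F(t^*)^r + G(t^*)^r \leq V^r,
\]
which is exactly the left half of the triangle inequality $(\|\gamma_1\|_{V^r}^r + \|\gamma_2\|_{V^r}^r)^{1/r} \leq \|\gamma_1 + \gamma_2\|_{V^r}$ stated just above the lemma, applied to the concatenation $\gamma = \gamma|_{[a,t^*]} + \gamma|_{[t^*,b]}$, will force $F(t^*), G(t^*) \leq 2^{-1/r} V$. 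Taking $\gamma_1 := \gamma|_{[a,t^*]}$ and $\gamma_2 := \gamma|_{[t^*,b]}$ then completes the proof.

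The mechanism for producing $t^*$ will be the intermediate value theorem applied to the continuous function $h(t) := F(t)^r - G(t)^r$, which satisfies $h(a) = -V^r \leq 0 \leq V^r = h(b)$. This reduces the whole argument to establishing continuity of $F$, with continuity of $G$ being entirely symmetric.

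The continuity step is the only one that requires any care, and it is where I expect the main (albeit not deep) obstacle to lie. Monotonicity $F(s) \leq F(t)$ for $s \leq t$ is immediate from the definition. For an upper bound in the reverse direction I would use the other half of the triangle inequality, $F(t) \leq F(s) + \|\gamma|_{[s,t]}\|_{V^r}$, combined with the monotonicity $\|\cdot\|_{V^r} \leq \|\cdot\|_{V^1}$ in $r$ recorded earlier and the identification of the $V^1$-norm of $\gamma|_{[s,t]}$ with its arc length. This yields the clean estimate
\[
0 \leq F(t) - F(s) \leq \int_s^t \|\gamma'(u)\|_{T_{\gamma(u)}G}\,du,
\]
whose right-hand side tends to $0$ as $t - s \to 0$ by continuous differentiability of $\gamma$; this is precisely where the $C^1$ hypothesis enters. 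Once $F$ and $G$ are known to be continuous, the IVT concludes the argument and everything else is formal.
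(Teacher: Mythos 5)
Your proof is correct and is essentially the argument given in the paper: the paper locates the cut point as $t_* = \sup\{t : \|\gamma|_{[a,t]}\|_{V^r} \leq 2^{-1/r}\|\gamma\|_{V^r}\}$, implicitly relies on continuity to get equality $\|\gamma|_{[a,t_*]}\|_{V^r} = 2^{-1/r}\|\gamma\|_{V^r}$, and then invokes the concatenation inequality for $\gamma_2$ — the same cut-point-plus-continuity mechanism you use, only you find the cut via the intermediate value theorem on $F(t)^r - G(t)^r$ rather than a supremum. Your write-up is a bit more explicit about why $t \mapsto \|\gamma|_{[a,t]}\|_{V^r}$ is continuous (through the monotonicity $V^r \leq V^1$ and the arc-length bound), a detail the paper's one-line proof leaves tacit.
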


\begin{proof}
Let $t_* = \sup \{t \in [a,b] : \|\gamma|_{[a,t]}\|_{V^r} \leq  2^{-1/r} \| \gamma \|_{V^r}\}.$ Letting $\gamma_1 = \gamma|_{[a,t_*]}$ we have $\|\gamma_1\|_{V^r} =  2^{-1/r} \| \gamma \|_{V^r}$. The bound for $\gamma_2 = \gamma|_{[t_*,b]}$ follows from the left triangle inequality above.
\end{proof}

Given a continuously differentiable
curve $\gamma: [a,b] \to G$, we can define its
\emph{left trace} $\gamma_l: [a,b] \to {\mathfrak g}$ by the formula
$$ \gamma_l(t) = \int_a^t \gamma(s)^{-1} \gamma'(s)\ ds$$
Note that the trace is also a continuously differentiable curve,
but taking values now in the Lie algebra ${\mathfrak g}$ instead of $G$.
Clearly $\gamma_l$ is determined uniquely from $\gamma$.
The converse is also true after specifying the initial point $\gamma(a)$
of $\gamma$, since $\gamma$ can then be recovered by solving the ordinary differential equation
\begin{equation}\label{gagal}
 \gamma'(t) = \gamma(t) \gamma'_l(t).
\end{equation}
This equation is fundamental in the theory of eigenfunctions
of a one-dimensional Schr\"o\-dinger or Dirac operator, or equivalently
in the study of the nonlinear Fourier transform; see, for example, \cite{tao03nfa}, \cite{muscalu03ctc} for a full discussion. Basically for a fixed potential $f(t)$
and a frequency $k$, the nonlinear Fourier transform traces out a curve $\gamma(t)$ (depending on $k$)
taking values in a Lie group (e.g. $SU(1,1)$), and the corresponding left
trace is essentially the ordinary linear Fourier transform.

It is easy to see that these curves have the same length (i.e. they
have the same $V^1$ norm):
\begin{equation}\label{isometry}
 |\gamma| = |\gamma_l|.
\end{equation}
We now show that something similar is true for the $V^r$ norms provided
that $r < 2$.

\begin{lemma} \label{lglemma}  Let $1 \leq r < 2$, let $G$ be a connected
finite-dimensional Lie group, and
let $\|\cdot\|_{\mathfrak g}$ be a norm on the Lie algebra
of $G$.  Then there exist a constant
$C > 0$ depending only on these above quantities,
such that for all smooth
curves $\gamma: [a,b] \to G$, we have
\begin{equation}\label{nonlinear}
\| \gamma \|_{V^r} \leq \| \gamma_l \|_{V^r} + C \min( \| \gamma_l \|_{V^r}^2, \| \gamma_l \|_{V^r}^r)
\end{equation}
and
\begin{equation}\label{linear}
\| \gamma_l \|_{V^r} \leq \| \gamma \|_{V^r} + C \min( \| \gamma \|_{V^r}^2, \| \gamma \|_{V^r}^r ).
\end{equation}
\end{lemma}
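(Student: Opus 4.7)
The plan is to combine a local near-identity comparison between $\gamma$ and $\gamma_l$ with the subdivision Lemma \ref{subdivide}, exploiting that Young's integration theory $\int f\,dg$ for $f,g\in V^r$ is available precisely because $r<2$. The heart of the argument is a near-identity estimate: for any smooth $\eta:[c,d]\to G$ with $\|\eta_l\|_{V^r([c,d])}\le\epsilon_0$ (a small threshold depending only on $G$ and $\|\cdot\|_{\mathfrak g}$), solving the ODE $\eta'=\eta\,\eta_l'$ via Chen's iterated integrals (equivalently, the Magnus expansion) writes
\[
 \log\bigl(\eta(c)^{-1}\eta(d)\bigr)=\bigl(\eta_l(d)-\eta_l(c)\bigr)+R,
\]
where the leading term of $R$ is the Young integral $\tfrac12\int_c^d[\eta_l(t)-\eta_l(c),d\eta_l(t)]$, bounded via Young's inequality by $C\|\eta_l\|_{V^r([c,d])}^2$. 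A standard sewing estimate dominates the $n$-th iterated integral by $C^n\|\eta_l\|_{V^r}^n/\Gamma(n/r+1)$, so with $\epsilon_0$ small enough the tail sums to $\|R\|_{\mathfrak g}\lc\|\eta_l\|_{V^r}^2$. Consequently
\[
 d(\eta(c),\eta(d))\le\|\eta_l(d)-\eta_l(c)\|_{\mathfrak g}+C\,\|\eta_l|_{[c,d]}\|_{V^r}^{\,2},
\]
and symmetrically (inverting the same expansion and using that $d(\cdot,\cdot)$ and $\|\log(\cdot)\|_{\mathfrak g}$ are comparable near the identity)
\[
 \|\eta_l(d)-\eta_l(c)\|_{\mathfrak g}\le d(\eta(c),\eta(d))+C\,\|\eta|_{[c,d]}\|_{V^r}^{\,2}.
\]

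In the small-norm regime $\|\gamma_l\|_{V^r}\le\epsilon_0$, apply the first estimate on each subinterval of any partition $a=t_0<\cdots<t_n=b$ and use Minkowski (valid since $r\ge 1$):
\[
 \Bigl(\sum_j d(\gamma(t_{j+1}),\gamma(t_j))^r\Bigr)^{\!1/r}\le\|\gamma_l\|_{V^r}+C\Bigl(\sum_j\|\gamma_l|_{[t_j,t_{j+1}]}\|_{V^r}^{2r}\Bigr)^{\!1/r}.
\]
Combining the superadditivity $\sum_j\|\gamma_l|_{[t_j,t_{j+1}]}\|_{V^r}^r\le\|\gamma_l\|_{V^r}^r$ (the lower concatenation inequality stated in the appendix) with the trivial $\|\gamma_l|_{[t_j,t_{j+1}]}\|_{V^r}\le\|\gamma_l\|_{V^r}$ shows the error sum is at most $\|\gamma_l\|_{V^r}^{2r}$, so taking the supremum over partitions gives \eqref{nonlinear} in this regime. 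In the large-norm regime $\|\gamma_l\|_{V^r}>\epsilon_0$, iterate Lemma \ref{subdivide} to express $\gamma$ as a concatenation of $N\lc(\|\gamma_l\|_{V^r}/\epsilon_0)^r$ consecutive pieces $\gamma^{(1)},\dots,\gamma^{(N)}$ whose traces each have $V^r$-norm $\le\epsilon_0$; apply the small-norm case to each piece (so $\|\gamma^{(i)}\|_{V^r}\lc 1$) and sum via the upper concatenation inequality $\|\gamma\|_{V^r}\le\sum_i\|\gamma^{(i)}\|_{V^r}$ to conclude $\|\gamma\|_{V^r}\lc N\lc\|\gamma_l\|_{V^r}^r$, which yields \eqref{nonlinear} since $\|\gamma_l\|_{V^r}\ge\epsilon_0$. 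Inequality \eqref{linear} is obtained by running the same two steps with the companion form of the near-identity estimate, interchanging the roles of $\gamma$ and $\gamma_l$.

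The hard part is the near-identity step, specifically the $C\|\eta_l\|_{V^r}^{\,2}$ control of the Magnus tail. Its leading Young integral $\int[\eta_l(t)-\eta_l(c),d\eta_l(t)]$ is defined only when $2/r>1$, and the higher iterated integrals require the same sewing-type estimate combined with a combinatorial argument to produce a convergent majorant. At the critical exponent $r=2$ Young's theorem fails, and one must invoke Lyons' full rough-paths framework with additional ``area'' data; this threshold at $r=2$ is exactly what prevents the present argument from being combined with Theorem \ref{maintheorem} to yield a variational Carleson theorem for the nonlinear Fourier transform, as discussed in the introduction.
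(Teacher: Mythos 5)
Your route to the key near-identity estimate is genuinely different from the paper's. The paper explicitly declines to invoke Lyons' machinery and instead gives a self-contained proof of the estimate \eqref{k-guard-piecemeal} by an induction on scale: first a Taylor-expansion base case on short intervals, then an inductive step using Lemma \ref{subdivide} together with Baker--Campbell--Hausdorff, closing because $2\cdot 2^{-2/r}<1$ when $r<2$. You instead propose to control the Magnus tail by Young integration, the sewing lemma, and the neo-classical estimate $C^n\|\eta_l\|_{V^r}^n/\Gamma(n/r+1)$ on iterated integrals. This is legitimate but it is precisely the external input the paper set out to avoid; the paper's induction-on-scale argument buys self-containedness, while yours is shorter modulo the rough-path literature. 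Once the near-identity estimate is in hand, your Minkowski/superadditivity step and your large-norm subdivision step match the paper's.

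There is, however, a real gap in your treatment of \eqref{linear}. You assert the ``companion form''
\[
\|\eta_l(d)-\eta_l(c)\|_{\mathfrak g}\le d(\eta(c),\eta(d))+C\,\|\eta|_{[c,d]}\|_{V^r}^{2}
\]
follows ``by inverting the same expansion.'' Inverting the estimate
$\|\log(\eta(c)^{-1}\eta(d))-(\eta_l(d)-\eta_l(c))\|_{\mathfrak g}\le K\|\eta_l|_{[c,d]}\|_{V^r}^2$
via the triangle inequality and comparability of $d$ with $\|\log(\cdot)\|_{\mathfrak g}$ yields an error of size $\|\eta_l|_{[c,d]}\|_{V^r}^2$, not $\|\eta|_{[c,d]}\|_{V^r}^2$. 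Converting the former to the latter requires a priori control of $\|\eta_l\|_{V^r}$ by $\|\eta\|_{V^r}$ --- which is exactly the content of \eqref{linear}, so the step as written is circular. The paper closes this loop with an open-closed continuity argument (consider the set of $t$ for which the restricted version of \eqref{linear-small} holds, and use \eqref{nonlinear-small} to show it is both open and closed when $\|\gamma\|_{V^r}$ is small). Some such bootstrap is needed in your argument as well and is currently missing.
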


An analogous result holds for the \emph{right trace}, $\int_a^t \gamma'(s) \gamma(s)^{-1}\ ds$, once the left-invariant norm on $T_gG$ is replaced by a right-invariant norm.

\begin{proof} We may take $r > 1$ since the claim is
already known for $r=1$ thanks to \eqref{isometry}.

It shall suffice to prove the existence of a small $\delta > 0$ such that
we have the estimate
\begin{equation}\label{nonlinear-small}
 \| \gamma \|_{V^r} = \| \gamma_l \|_{V^r} + O( \| \gamma_l \|_{V^r}^2 )
\end{equation}
whenever $\|\gamma_l \|_{V^r} \leq \delta$, and similarly
\begin{equation}\label{linear-small}
 \| \gamma_l \|_{V^r} = \| \gamma \|_{V^r} + O( \| \gamma \|_{V^r}^2 )
\end{equation}
whenever $\|\gamma\|_{V^r} \leq \delta$.  (We allow the $O()$ constants here to
depend on $r$, the Lie group $G$, and the norm structure, but not on $\delta$).
Let us now see why these estimates will prove the lemma.  Let us
begin by showing that \eqref{nonlinear-small} implies \eqref{nonlinear}.
Certainly this will be the case if $\gamma_l$ has $V^r$ norm less than $\delta$.
If instead $\gamma_l$ has $V^r$ norm larger than $\delta$, we can use
Lemma \ref{subdivide} repeatedly to partition it into
$O( \delta^{-r} \| \gamma_l \|_{V^r}^r )$ curves, all of whose $V^r$ norms are
less than $\delta$.  These curves are the left-traces of various components of
$\gamma$, and thus by \eqref{nonlinear-small} these components have
a $V^r$ norm bounded by some quantity depending on $\delta$.  Concatenating
these components together (using the triangle inequality)
we obtain the result.  A similar argument
allows one to deduce \eqref{linear} from \eqref{linear-small}.

Next, we observe that to prove the two estimates \eqref{nonlinear-small},
\eqref{linear-small} it suffices to just
prove one of the two, for instance
\eqref{nonlinear-small}, as this will also imply \eqref{linear-small}
for $\| \gamma \|_{V^r}$ sufficiently small by the usual continuity argument
(look at the set of times $t$ for which the restriction of $\gamma$ to
$[a,b]$ obeys a suitable version of \eqref{linear-small}, and use
\eqref{nonlinear-small} to show that this set is both open and closed if
$\|\gamma \|_{V^r}$ is small enough).

It remains to prove \eqref{nonlinear-small} for $\delta$ sufficiently small.
We shall in fact prove the more precise statement (note $\gamma_l(a)=0$)
\begin{equation}\label{k-guard}
 \| \log(\gamma(a)^{-1} \gamma(b)) - \gamma_l(b) \|_{\mathfrak g}
\leq K \| \gamma_l \|_{V^r}^2
\end{equation}
for some absolute constant $K > 0$ (and for $\delta$
sufficiently small), where $\log$ is the inverse of the
exponential map $\exp: {\mathfrak g} \to G$.
Note that it follows from a continuity argument as in the previous paragraph that if $\delta$ is sufficiently small then $\gamma(b)^{-1} \gamma(a)$
is sufficiently close to the identity so that the logarithm is well-defined.
Let us now see why \eqref{k-guard} implies \eqref{nonlinear-small}.
Applying the inequality to any segment $[t_j, t_{j+1}]$ in $[a,b]$ we see that
$$
 \| \log(\gamma(t_{j})^{-1} \gamma(t_{j+1})) -
(\gamma_l(t_{j+1}) - \gamma_l(t_j)) \|_{\mathfrak g}
\leq K
\| \gamma_l|_{[t_j, t_{j+1}]} \|_{V^r}^2$$
and hence with Lemma \ref{terryfix} (since $\delta$ is small)
$$ d( \gamma(t_{j+1}), \gamma(t_j) ) = \| \gamma_l(t_{j+1}) - \gamma_l(t_j)
\|_{\mathfrak g} +
O( \| \gamma_l|_{[t_j, t_{j+1}]} \|_{V^r}^2 ).$$
Estimating
$O( \| \gamma_l|_{[t_j, t_{j+1}]} \|_{V^r}^2 )$
crudely by $\| \gamma_l \|_{V^r}
O( \| \gamma_l|_{[t_j, t_{j+1}]} \|_{V^r})$ and taking the $\ell^r$ sum in
the $j$ index, we see that for any partition
$a = t_0 < \ldots < t_n = b$ we have
$$ \Big(\sum_{j=0}^{n-1} d( \gamma(t_{j+1}), \gamma(t_j) )^r\Big)^{1/r} =
\Big(\sum_{j=0}^{n-1} \| \gamma_l(t_{j+1}) - \gamma_l(t_j)
\|_{\mathfrak g}^r\Big)^{1/r} +
O( \| \gamma_l \|_{V^r}^2 ).$$
Taking suprema over all partitions we obtain the result.

It remains to prove \eqref{k-guard} for some suitably large $K$.  This
we shall do by an induction on scale (or ``Bellman function'') argument.
Let us fix the smooth curve $\gamma$.  We shall
prove the estimate for all subcurves of $\gamma$, i.e. for all
intervals $[t_1, t_2]$ in $[a,b]$, we shall prove that
\begin{equation}\label{k-guard-piecemeal}
 \| \log(\gamma(t_1)^{-1} \gamma(t_2)) - (\gamma_l(t_2) - \gamma_l(t_1)) \|_{\mathfrak g}
\leq K \| \gamma_l|_{[t_1,t_2]} \|_{V^r}^2.
\end{equation}
Let us first prove this in the case when the interval $[t_1,t_2]$ is sufficiently short, say of
length at most $\eps$ for some very small $\eps$ (depending on $\gamma$).  In that case, we perform
a Taylor expansion to obtain
\begin{equation}\label{gap}
 \gamma_l(t) = \gamma_l(t_1) + \gamma'_l(t_1) (t-t_1) + \frac{1}{2} \gamma''_l(t_1) (t-t_1)^2
+ O_\gamma((t-t_1)^3)
\end{equation}
and
\begin{equation}\label{gapl}
 \gamma'_l(t) = \gamma'_l(t_1) + \gamma''_l(t_1) (t-t_1)
+ O_\gamma((t-t_1)^2)
\end{equation}
when $t \in [t_1,t_2]$, and where the $\gamma$ subscript in $O_\gamma$ means that the constants here are allowed
to depend on $\gamma$ (more specifically, on the $C^3$ norm of $\gamma$), and the $O()$ is with respect to the $\| \|_{\mathfrak g}$
norm.  Also we remark that as $\gamma$ is assumed
smooth, $\gamma'_l(t_1)$ is bounded away from zero.  It is then an easy matter to conclude that
\begin{equation}\label{vq}
 \| \gamma_l|_{[t_1,t_2]} \|_{V^r} \geq \frac{1}{2} \| \gamma'_l(t_1) \|_{\mathfrak g} |t_2-t_1|
\end{equation}
if $\eps$ is sufficiently small depending on $\gamma$.  On the other hand, from \eqref{gagal} and \eqref{gapl}
we have
$$ \gamma'(t) = \gamma(t) (\gamma'_l(t_1) + \gamma''_l(t_1) (t-t_1)
+ O_\gamma((t-t_1)^2))$$
from which one may conclude that
$$ \gamma(t) = \gamma(t_1) \exp( \gamma'_l(t_1) (t-t_1) + \frac{1}{2} \gamma''_l(t_1) (t-t_1)^2
+ O( \| \gamma'_l(t_1)^2 \|_{\mathfrak g} |t-t_1|^2 ) + O_\gamma((t-t_1)^3) )$$
for all $t \in [t_1,t_2]$, if $\gamma$ is sufficiently small.  We rewrite this as
\begin{multline*} \log( \gamma(t_1)^{-1} \gamma(t) ) \\
=\gamma'_l(t_1) (t-t_1) + \frac{1}{2} \gamma''_l(t_1) (t-t_1)^2
+ O( \| \gamma'_l(t_1)^2 \|_{\mathfrak g} |t-t_1|^2 ) + O_\gamma((t-t_1)^3),
\end{multline*}
and then specialize to the case $t = t_2$. By \eqref{gap}, we have
$$ \log( \gamma(t_1)^{-1} \gamma(t_2) ) - (\gamma_l(t_2) - \gamma_l(t_1)) =
 O( \| \gamma'_l(t_1)^2 \|_{\mathfrak g} |t_2-t_1|^2 ) + O_\gamma((t_2-t_1)^3),$$
and hence by \eqref{vq} we have \eqref{k-guard-piecemeal} if $t_2-t_1$ is small enough
(depending on $\gamma$) and $K$ is large enough (\emph{independent} of $\gamma$).

This proves \eqref{k-guard-piecemeal} when the interval $[t_1,t_2]$ is small enough.  By \eqref{vq}, it also
proves \eqref{k-guard-piecemeal} when $\| \gamma_l|_{[t_1,t_2]} \|_{V^r}$ is sufficiently small.  To conclude
the proof of \eqref{k-guard-piecemeal} in general, we now assert the following inductive claim:
if \eqref{k-guard-piecemeal} holds whenever $\| \gamma_l|_{[t_1,t_2]} \|_{V^r} < \eps$ and some given $0 < \eps \leq \delta$,
then it also holds whenever $\| \gamma_l|_{[t_1,t_2]} \|_{V^r} < 2^{1/r} \eps$, providing that
$K$ is sufficiently large (\emph{independent} of $\eps$) and $\delta$ is sufficiently small (depending on $K$, but
\emph{independent} of $\eps$).  Iterating this we will obtain the claim
\eqref{k-guard-piecemeal} for all intervals $[t_1,t_2]$ in $[a,b]$.

It remains to prove the inductive claim.  Let $[t_1,t_2]$ be any subinterval of $[a,b]$ such that
the quantity $A = \| \gamma_l|_{[t_1,t_2]} \|_{V^r}$ is less than  $2^{1/r} \eps$.  Applying
Lemma \ref{subdivide}, we may subdivide $[t_1,t_2] = [t_1,t_*] \cup [t_*,t_2]$ such that
$$ \| \gamma_l|_{[t_1,t_*]} \|_{V^r}, \| \gamma_l|_{[t_*,t_2]} \|_{V^r} \leq 2^{-1/r} A < \eps \leq r.$$
By the inductive hypothesis, we thus have
$$
 \| \log(\gamma(t_1)^{-1} \gamma(t_*)) - (\gamma_l(t_*) - \gamma_l(t_1)) \|_{\mathfrak g}
\leq K 2^{-2/r} A^2$$
and
$$
 \| \log(\gamma(t_*)^{-1} \gamma(t_2)) - (\gamma_l(t_2) - \gamma_l(t_*)) \|_{\mathfrak g}
\leq K 2^{-2/r} A^2.$$
In particular, we have
\begin{align*}
 \| \log(\gamma(t_1)^{-1} \gamma(t_*)) \|_{\mathfrak g}
&\leq \| \gamma_l(t_*) - \gamma_l(t_1) \|_{\mathfrak g} + K 2^{-2/r} A^2 \\
&\leq \| \gamma_l|_{[t_1,t_*]} \|_{V^r} + O( K A^2 ) \\
&= O( A (1 + KA ) ) = O( A (1 + K\delta) )= O(A)
\end{align*}
if $\delta$ is sufficiently small depending on $K$.  Similarly we have
$$  \| \log(\gamma(t_*)^{-1} \gamma(t_2)) \|_{\mathfrak g} = O(A)$$
and hence by the Baker-Campbell-Hausdorff formula (if $\delta$ is sufficiently small)
$$  \| \log(\gamma(t_1)^{-1} \gamma(t_2)) - \log(\gamma(t_1)^{-1} \gamma(t_*))
- \log(\gamma(t_*)^{-1} \gamma(t_2)) \|_{\mathfrak g} = O(A^2).$$
By the triangle inequality, we thus have
$$
 \| \log(\gamma(t_1)^{-1} \gamma(t_2)) - (\gamma_l(t_2) - \gamma_l(t_1)) \|_{\mathfrak g}
\leq 2 K 2^{-2/r} A^2 + O(A^2).$$
We now use the hypothesis $r < 2$, which forces $2 \times 2^{-2/r} < 1$.
If $K$ is large enough (depending on $r$, but independently of $
\delta$, $A$, or $\eps$) we thus have
\eqref{k-guard-piecemeal}.  This closes the inductive argument.
\end{proof}

Letting $w,v$ be any elements of the Lie algebra $\mathfrak g,$ one can define a nonlinear Fourier summation operator associated to $G,w,v$ by means of the left trace
\begin{align*}
\mathcal{NC}[f](k,0)&=I \\
\frac{\partial}{\partial x} \mathcal{NC}[f](k,x) &= \mathcal{NC}[f](k,x)\left(\real(e^{-2\pi i k x} f(x))w + \imaginary(e^{-2\pi i k x} f(x)) v\right)
\end{align*}
or (giving a different operator) by the right trace
\begin{align*}
\mathcal{NC}[f](k,0) &= I \\
\frac{\partial}{\partial x} \mathcal{NC}[f](k,x) &= \left(\real(e^{-2\pi i k x} f(x))w + \imaginary(e^{-2\pi i k x} f(x)) v\right) \mathcal{NC}[f](k,x).
\end{align*}
Above, $k,x \in \rea$, $\mathcal{NC}[f]$ takes values in $G$, $I$ is the identity element of $G$, and $\real,\imaginary$ are the real and imaginary parts of a complex number.
An example of interest is given by $G = SU(1,1),$ and
$$w = \left(\begin{array}{cc} 0 & 1\\1 & 0\end{array} \right),\qquad
v = \left(\begin{array}{cc} 0 & i\\-i & 0\end{array} \right).$$

Combining Lemma \ref{lglemma} with the variational Menshov-Paley-Zygmund theorem of the previous section,
we obtain a variational version of the Christ-Kiselev theorem \cite{christ01wab}. Namely, we see that for $1 \leq p < 2$ and $r > p$
$$ \big\| \indic{|{\mathcal{NC}}[f]| \leq 1} {\mathcal{NC}}[f] \big\|_{L^{p'}_k(V^r_x)} \leq C_{p,r,G,w,v} \| f \|_{L^p(\R)}$$
and
$$ \big\| \indic{|{\mathcal{NC}}[f]| \geq 1} {\mathcal{NC}}[f]\big\|_{L^{p'/r}_k(V^r_x)}^{1/r} \leq C_{p,r,G,w,v} \| f \|_{L^p(\R)}.$$
Note that the usual logarithms are hidden in the $d$ metric we have placed on the Lie group $G$.

Extending these estimates to the case $p=2$ is an interesting and
challenging problem, even when $r=\infty$, which would
correspond to a nonlinear Carleson theorem. Lemma \ref{lglemma}
cannot be extended to any exponent $r\ge 2$. Sandy Davie and
the fifth author of this paper have an unpublished example of a
curve in the Lie group ${SU}(1,1)$ with trace in the subspace of
$\mathfrak{su}(1,1)$ of matrices vanishing on the diagonal so
that the diameter of the curve is not controlled by the
$2$-variation of the trace.

Terry Lyons' machinery \cite{lyons98de} via iterated integrals
faces an obstruction in  a potential application to a nonlinear
Carleson theorem because of the unboundedness results for the
iterated integrals shown in \cite{muscalu03cme}.

\section{An application to ergodic theory}\label{ergodicappendix}

Wiener-Wintner type theorems is an area in ergodic theory that is most
closely related to the study of Carleson's operator. In \cite{lacey08wwt},
Lacey and Terwilleger prove the following singular integral variant of the
Wiener-Wintner theorem:

\begin{theorem}\label{ltwiener}
For $1<p$, all measure preserving flows
$\{T_t:t\in \R\}$ on a probability space
$(X,\mu)$ and functions $f\in L^p(\mu)$, there is
a set $X_f\subset X$ of probability one, so that for all $x\in X_f$
we have that the limit
$$\lim_{s\to 0} \int_{s<|t|<1/s} e^{i\theta t} f(T_tx)\frac {dt}t\ \ .$$
exists for all $\theta\in \R$.
\end{theorem}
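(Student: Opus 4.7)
The plan is to deduce Theorem \ref{ltwiener} from the variation norm Carleson inequality (Theorem \ref{maintheorem}) via Calder\'on's transference, refining the strategy used in \cite{lacey08wwt} for oscillation norms. The key point is that the variation-norm estimate provides enough regularity in the modulation parameter $\theta$ to upgrade pointwise convergence on a countable dense set of $\theta$'s to simultaneous convergence for all $\theta \in \mathbb{R}$.

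First I would use the classical identity
\[
\mathcal{S}[F](\xi,0) = \tfrac{1}{2}F(0)\,-\,\tfrac{1}{2\pi i}\,\mathrm{p.v.}\!\int_{\mathbb{R}} \frac{e^{-2\pi i \xi t}}{t}\,F(t)\,dt
\]
to recognize the partial Fourier integral, evaluated at the origin, as a modulated Hilbert transform whose modulation parameter is the frequency cutoff $\xi$. Applying Calder\'on's transference to the linearized form \eqref{linearizedtheorem} of Theorem \ref{maintheorem} (so that only uniform bounds on convolutions against Schwartz test functions enter, exactly as in Appendix \ref{transference}), one obtains, for $f \in L^p(X,\mu)$ with $1<p<\infty$ and $r > \max(p',2)$,
\begin{equation}\label{vartransferred}
\Big\|\mathcal{V}^r_\theta\, \tilde{\mathcal{H}}_\theta f\Big\|_{L^p(X)} \lesssim \|f\|_{L^p(X)}, \qquad \tilde{\mathcal{H}}_\theta f(x) := \mathrm{p.v.}\!\int_{\mathbb{R}} \frac{e^{i\theta t}}{t}\,f(T_tx)\,dt.
\end{equation}
In particular there is a full measure set $X^{(1)}_f \subset X$ on which $\theta \mapsto \tilde{\mathcal{H}}_\theta f(x)$ is defined (as the a.e. pointwise limit of symmetric truncations) and has finite $r$-variation, hence is a regulated function of $\theta$.

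Next, for each individual $\theta$, the pointwise convergence $\lim_{s\to 0} \int_{s<|t|<1/s} e^{i\theta t} f(T_tx)\,\tfrac{dt}{t} = \tilde{\mathcal{H}}_\theta f(x)$ for a.e. $x$ is a classical consequence of Cotlar's inequality for truncated Hilbert transforms, transferred to the flow. Applying this with $\theta$ ranging over a countable dense set $D \subset \mathbb{R}$ (for instance $D = \mathbb{Q}$), and intersecting the resulting full-measure sets with $X^{(1)}_f$ and with $\{M^*f<\infty\}$ (where $M^*$ denotes the ergodic maximal function along the flow), gives the desired $X_f$. For $x \in X_f$ and arbitrary $\theta \in \mathbb{R}$, approximating $\theta$ by $\theta_n \in D$ with $\theta_n \to \theta$, the values $\lim_{s\to 0} I_s(\theta_n,x)$ exist and are Cauchy as $n\to\infty$ (using the $r$-variation of $\tilde{\mathcal{H}}_{(\cdot)} f(x)$ at $x$); to conclude, one needs a uniform-in-$s$ modulus-of-continuity estimate
\[
\bigl|I_s(\theta,x)-I_s(\theta_n,x)\bigr| \le \varphi(|\theta-\theta_n|,x), \qquad \varphi(\delta,x)\to 0 \text{ as } \delta\to 0,
\]
to complete the Cauchy argument in $s$ and identify $\lim_{s\to 0} I_s(\theta,x)$ with the common limit.

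The main obstacle will be this last uniform-in-$s$ modulus-of-continuity estimate. Since $f$ is only in $L^p$, pointwise bounds on $f(T_tx)$ are unavailable, and the naive triangle inequality gives constants not decaying in $|\theta-\theta_n|$. One expects to handle this by splitting the truncation interval at an intermediate scale: on the non-oscillatory part $|t|\,|\theta-\theta_n|\lesssim 1$, use $|e^{i\theta t}-e^{i\theta_n t}|\lesssim |\theta-\theta_n||t|$ together with an $M^*f$ bound; on the oscillatory part $|t|\,|\theta-\theta_n|\gtrsim 1$, use an integration-by-parts argument against a smoothed cutoff (so that the boundary terms vanish), yielding a gain in $|\theta-\theta_n|^{-1}$ balanced against an $M^*f$ bound. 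The role of \eqref{vartransferred} is crucial: it is precisely the finite $r$-variation in $\theta$ that ensures the limits $\lim_s I_s(\theta_n,x)$ are Cauchy as $\theta_n$ varies over $D$, so that the approximation argument extends the pointwise statement from $D$ to all of $\mathbb{R}$.
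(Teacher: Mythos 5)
Your proposal takes a genuinely different route from the paper, and it contains a gap that you correctly identify but do not close, and that I believe cannot be closed in the form you sketch. The paper's Appendix D does not attempt a dense-set-of-$\theta$'s argument at all. Instead it first proves a \emph{mollified} Wiener--Wintner theorem, with the sharp cutoff $\indic{s<|t|<1/s}$ replaced by $\phi(st)$ for $\phi$ in the Wiener algebra. The reason this is the right move is that one can then write $\phi(st)=\int\widehat\phi(\eta)e^{i\eta st}\,d\eta$ and substitute $\xi_k=\xi+\eta s_k$, which converts variation in the truncation parameter $s$ into variation in the \emph{frequency} variable of $\cS$, so that Theorem~\ref{maintheorem} applies directly and yields, in one stroke, a bound on $\sup_\theta\|\cdot\|_{V^r(s)}$. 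Because the supremum over $\theta$ is inside the $L^p$ norm, a single exceptional null set works for \emph{all} $\theta$ simultaneously; no passage from a countable dense set to all of $\R$ is needed. The sharp-cutoff Theorem~\ref{ltwiener} is then recovered by approximating $\indic{s<|t|<1/s}$ by Wiener-space cutoffs in $L^1$, handling $f\in L^\infty$ first, and using $L^\infty$ as a dense subclass together with maximal function estimates; the paper makes no attempt to retain the quantitative variational bound for the sharp cutoff.

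The gap in your argument is the uniform-in-$s$ modulus-of-continuity
\[
\bigl|I_s(\theta,x)-I_s(\theta_n,x)\bigr|\le\varphi(|\theta-\theta_n|,x),\qquad \varphi(\delta,x)\to 0 \text{ as } \delta\to 0,
\]
which, as stated, is essentially equivalent to the theorem and is not obtainable by the splitting you describe. Write $\delta=|\theta-\theta_n|$ and split at $|t|=R$. The near region $s<|t|<R$ is indeed $O(\delta R\, M^*f(x))$. But the far region
\[
\int_{R<|t|<1/s}\bigl(e^{i\theta t}-e^{i\theta_n t}\bigr)f(T_tx)\,\frac{dt}{t}
\]
cannot be made small uniformly in $s$. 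Each of the two pieces $\int_{R<|t|<1/s}e^{i\theta' t}f(T_tx)\,dt/t$ is the difference of two truncated modulated Hilbert transforms and is therefore bounded only by the (transferred) Carleson maximal function $C^*f(x)$ --- a quantity that is finite a.e.\ but carries no $\delta$ (or $R$) gain. Integration by parts does not save this: moving the derivative off $e^{i\theta t}-e^{i\theta_n t}$ produces either $\partial_t\bigl(f(T_tx)/t\bigr)$, which involves a derivative of $f$ that does not exist for $f\in L^p$, or boundary terms $f(T_{\pm 1/s}x)$ that have no pointwise control; and even with a smooth intermediate cutoff the far region is still two tails of singular integrals whose decay as $R\to\infty$ uniformly in $s$ is exactly the convergence-at-infinity statement you are trying to prove. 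There is also a secondary issue: a finite $r$-variation bound on $\theta\mapsto\tilde\cH_\theta f(x)$ only makes that function \emph{regulated}, with possibly distinct one-sided limits at some $\theta$, so the Cauchy-ness of $\tilde\cH_{\theta_n}f(x)$ as $\theta_n\to\theta$ along an arbitrary sequence in $D$ is not guaranteed without continuity, which is again something the variation bound alone does not deliver. The mollification-plus-frequency-substitution device in the paper is specifically what circumvents both of these difficulties.
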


One idea to approach such convergence results is to study quantitative
estimates
in the parameter $s$ that imply convergence, as pioneered by Bourgain's paper \cite{bourgain89pet} in
similar context. We first need to pass to a mollified variant of the above
theorem:

\begin{theorem}
Let $\phi$ be a function on $\R$ in the Wiener space, i.e.
the Fourier transform $\widehat{\phi}$ is in $L^1(\R)$. For $1<p$,
all measure preserving flows
$\{T_t:t\in \R\}$ on a probability space
$(X,\mu)$ and functions $f\in L^p(\mu)$, there is
a set $X_f\subset X$ of probability one, so that for all $x\in X_f$
we have that the limits
$$\lim_{s\to \infty} \int e^{i\theta t} f(T_tx) \phi(st)\frac {dt}t\ \ ,$$
$$\lim_{s\to 0 } \int e^{i\theta t} f(T_tx)\phi(st)\frac {dt}t\ \ .$$
exist for all $\theta\in \R$.
\end{theorem}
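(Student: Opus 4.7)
The strategy is to transfer the variational Carleson estimate of Theorem \ref{maintheorem} from the real line to the ergodic system, apply it to the family of modulated Hilbert transforms along the flow $\{T_t\}$, and then express
\[
A_{s,\theta}f(x) := \int_{\R} e^{i\theta t}f(T_tx)\,\phi(st)\,\frac{dt}{t}
\]
as a weighted superposition of such Hilbert transforms; dominated convergence will then deliver the limits.

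First, I would apply a Calder\'on-style transference argument (parallel to Appendix \ref{transference}, but transferring to the flow $\{T_t\}$) to pass from Theorem \ref{maintheorem} on $\R$ to an ergodic variational estimate
\[
\bigl\|\{H_\eta f(x)\}_{\eta\in\R}\bigr\|_{L^p_x(V^r_\eta)} \lc \|f\|_{L^p(\mu)}, \qquad 1<p<\infty,\ r>\max(2,p'),
\]
where
\[
H_\eta f(x) := \text{p.v.}\!\int_{\R} e^{i\eta t}f(T_tx)\,\frac{dt}{t}.
\]
The real-line analogue of $H_\eta$ is a Fourier multiplier with symbol $i\pi\,\mathrm{sgn}(\eta-2\pi\xi)$, which differs from $2\chi_{(-\infty,\eta/(2\pi)]}(\xi)-1$ only by a constant, so the variational bound for $\cS$ in Theorem \ref{maintheorem} transfers directly to the $H_\eta$ family. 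Since any function of finite $r$-variation with $r<\infty$ is bounded and has one-sided limits at every point of the extended real line, there exists a full-measure set $X_f\subset X$ on which $\sup_\eta|H_\eta f(x)|<\infty$ and the one-sided limits $H_{\eta\pm}f(x)$ exist at every $\eta\in\R$ along with $H_{\pm\infty}f(x)$. The essential point is that this holds simultaneously at every $\eta$, not merely for almost every $\eta$.

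Next, since $\widehat\phi\in L^1(\R)$ we have $\phi(st)=\int_{\R}\widehat\phi(\lambda)\,e^{2\pi i\lambda s t}\,d\lambda$, and I would use this to write, for each $x\in X_f$,
\[
A_{s,\theta}f(x) = \int_{\R} \widehat\phi(\lambda)\,H_{\theta+2\pi s\lambda}f(x)\,d\lambda.
\]
The Fubini interchange required here is not immediate, because $f(T_tx)/t$ is not absolutely integrable; I would first establish the identity for $\phi$ Schwartz and $f$ in a dense subclass (e.g.\ bounded, flow-smooth functions, or spectral localizations), where truncations near $t=0$ and $|t|=\infty$ permit an absolute-convergence Fubini, and then extend to general $\phi\in W$ and $f\in L^p$ by density, using the transferred variational bound to propagate the identity.

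Finally, on $X_f$ the integrand is dominated by $|\widehat\phi(\lambda)|\sup_\eta|H_\eta f(x)|\in L^1(d\lambda)$. As $s\to 0^+$ we have $\theta+2\pi s\lambda\to\theta^\pm$ according to the sign of $\lambda$; as $s\to\infty$ we have $\theta+2\pi s\lambda\to\pm\infty$ according to the sign of $\lambda$. The existence of the one-sided limits established in the first step combined with dominated convergence then gives
\[
\lim_{s\to 0^+} A_{s,\theta}f(x) = H_{\theta+}f(x)\!\int_{\lambda>0}\!\widehat\phi(\lambda)\,d\lambda + H_{\theta-}f(x)\!\int_{\lambda<0}\!\widehat\phi(\lambda)\,d\lambda,
\]
with the analogous formula (with $H_{\pm\infty}f(x)$ in place of $H_{\theta\pm}f(x)$) for the $s\to\infty$ limit; both limits exist for every $\theta\in\R$ and every $x\in X_f$. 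The principal technical obstacle is the Fubini/density justification in the third paragraph; the essential new ingredient beyond Theorem \ref{ltwiener} is the variational control in $\eta$ coming from Theorem \ref{maintheorem}, which is precisely what allows a single exceptional set $X_f$ to be chosen independent of $\theta$.
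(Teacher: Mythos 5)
Your proposal relies on the same three ingredients as the paper's proof---Calder\'on transference, Fourier expansion of $\phi$ via $\widehat\phi\in L^1$, and the variation-norm Carleson estimate---but organizes them differently. The paper proves a single a priori estimate
\[
\Big\|\sup_{\theta}\Big\|\int e^{i\theta t} f(T_tx)\,\phi(st)\,\frac{dt}{t}\Big\|_{V^r(s)}\Big\|_{L^p(x)}\lc\|f\|_{L^p},
\]
from which existence of the limits for all $\theta$ and a.e.\ $x$ is immediate (finite $r$-variation in $s$ implies Cauchy behavior as $s\to 0$ and $s\to\infty$). That estimate is transferred from the line, and the line version is reduced to Theorem \ref{maintheorem} by expanding $\phi(st)=\int\widehat\phi(\eta)e^{2\pi i\eta st}\,d\eta$, pulling the $\eta$-integral out of the $V^r(s)$ and $\sup_\theta$ norms by Minkowski, and observing that for each fixed sign of $\eta$ the substitution $\xi_k=\xi+2\pi\eta s_k$ converts the $V^r$ variation in $s$ into the $V^r$ variation in frequency handled by the Carleson estimate. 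Crucially, the Fubini step occurs only inside a norm estimate, so it need be verified only for Schwartz $f$ with truncated kernels, and then extends by density of Schwartz functions using the very bound being proved.

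Your route instead transfers the variational Carleson bound to the family $\{H_\eta f\}_\eta$ of modulated ergodic Hilbert transforms, then argues pointwise: for $x\in X_f$ you want the identity
\[
A_{s,\theta}f(x)=\int\widehat\phi(\lambda)\,H_{\theta+2\pi s\lambda}f(x)\,d\lambda
\]
to hold for all $s,\theta$, and to then pass to the limit by dominated convergence. The dominated-convergence step is fine once the identity is in hand, and it has the pleasant byproduct of identifying the limiting values explicitly in terms of one-sided limits of $H_\eta f(x)$ weighted by $\int_{\lambda\gtrless 0}\widehat\phi$, which the paper's argument does not produce. But the pointwise identity is the genuine gap, and it is harder to close than your sketch suggests. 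A density argument gives you the identity, for each fixed pair $(s,\theta)$, on a full-measure set that a priori depends on $(s,\theta)$; since the parameter set is uncountable, you cannot simply union the exceptional sets. To get a single null set you would need some continuity in $(s,\theta)$ of both sides. The right-hand side is continuous in $(s,\theta)$ on $X_f$ by dominated convergence (a function of bounded $r$-variation has at most countably many jumps, so $H_{\theta+2\pi s\lambda}f(x)$ is continuous in $\lambda$ for all but countably many $\lambda$); but continuity of the left-hand side $A_{s,\theta}f(x)$ for a.e.\ $x$ is not known in advance --- indeed, establishing it would essentially require the paper's variational-in-$s$ a priori estimate anyway. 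So as written, the proposal implicitly reintroduces the estimate it was trying to replace. Also a minor point: the multiplier symbol of $\mathrm{p.v.}\int e^{i\eta t}f(x+t)\,dt/t$ is $i\pi\,\mathrm{sgn}(\eta+2\pi\xi)$, not $i\pi\,\mathrm{sgn}(\eta-2\pi\xi)$; this sign error is harmless for the argument.

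In short: same core ingredients, but the paper works entirely at the level of a priori estimates and so never needs a pointwise identity valid simultaneously for all $(s,\theta)$; your approach aims at a pointwise description and gives more refined information, but the bookkeeping needed to obtain a single exceptional set is a real obstacle that the Fubini/density paragraph does not dispose of. The cleanest fix is to first prove the paper's a priori estimate (which your own Fourier-expansion idea yields) and only then attempt the pointwise identity, using the now-available finite $V^r(s)$ variation to get continuity in $s$ on $X_f$.
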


This theorem clearly follows from an a priori estimate
$$\Big\|\sup_{\theta}\big\|\int e^{i\theta t} f(T_tx) \phi(st)\frac {dt}t
\big\|_{V^r(s)} \Big\|_{L^p(x)} \le C \|f\|_{L^p}$$
for $r>\max(2,p')$. Here we have written
$V^r(s)$ for the variation norm taken in the parameter $s$ of the
expression inside, and likewise for $L^p(x)$. The variation norm is
the strongest norm widely used in this context, while
Lacey and Terwilliger use a
weaker oscillation norm in the proof of their Theorem.

By a standard transfer method
(\cite{calderon-ergodic},
\cite{coifman-weiss-transference})
involving replacing $f$ by translates $T_yf$ and
an averaging procedure in $y$, the a priori estimate can be
deduced from an
analogous estimate on the real line
\begin{equation}\label{varaverage}
\Big\| \sup_\xi \Big\| \int
e^{ \xi it} f(x+t)\phi(st)\frac {dt}t\Big\|_{V^r(s)} \Big\|_{L^p(x)}\,\lc
\,\|f\|_{L^p} \ .
\end{equation}

The main purpose of this appendix is to show how this estimate
\eqref{varaverage} can be deduced from the main theorem of this paper
by an averaging argument. We write the $V^r(s)$ norm explicitly and expand $\phi$ into a
Fourier integral to obtain for the left hand side
of \eqref{varaverage} the expression
$$\Big\| \sup_\xi \sup_{s_0<s_1<\dots<s_K}
\Big(\sum_{k=1}^K
\Big|\int\int e^{ \xi it} f(x+t) e^{i\eta (s_k-s_{k-1})t}
\frac {dt}t \widehat{\phi}(\eta) \, d\eta \Big|^r\Big)^{1/r}\Big\|_{L^p(x)}\ .$$
Now pulling the integral in $\eta$ out of the various norms and
considering only positive $\eta$ (with the case of negative $\eta$ being
similar) and defining
$\xi_k=\xi+\eta s_k$ we obtain the upper bound
$$ \int_{\eta>0} \Big\| \sup_{\xi_0<\xi_1<\dots<\xi_K}
\Big(\sum_{k=1}^K
\Big|\int e^{ i(\xi_k-\xi_{k-1}) t} f(x+t)
\frac {dt}t\Big|^r\Big)^{1/r}\Big\|_{L^p(x)}\,|\widehat{\phi}(\eta)|\, d\eta\ .$$
Now applying the variational Carleson estimate and doing the trivial
integral in $\eta$ bounds this term by a constant times $\|f\|_{L^p}$.

%We conclude this appendix with two remarks.
%\noi{\it Remark.}
\begin{remark}  To prove the Lacey-Terwilleger theorem
\ref{ltwiener} from the mollified version, one may approximate the
characteristic functions used as cutoff functions by Wiener space functions so
that the difference is small in $L^1$ norm. Then at least for $f$
in $L^\infty$ one can show convergence of the limits by an approximation
argument, even though one will not recover the full strength of the
quantitative estimate in the Wiener space setting. The result for $f$ in $L^\infty$
can then be used as a dense subclass result in other $L^p$ spaces, which can
be handled by easier maximal function estimates and further approximation
arguments.
\end{remark}
\begin{remark} The classical version of the Wiener-Wintner theorem does not invoke
singular integrals but more classical averages of the type
$$ \frac{1}{2s} \int_{|t|<s} e^{i\theta t} f(T_tx)\, {dt}\  .$$
We note that the same technique as above may be applied to these
easier averages.
\end{remark}

\end{appendices}

\end{document}